\newtheorem{Thm}{Theorem}
\newtheorem{thm}{Theorem}[section]
\newtheorem{lem}[thm]{Lemma}
\newtheorem{prop}[thm]{Proposition}
\newtheorem{cor}[thm]{Corollary}
\newtheorem*{defi}{Definition}
\newtheorem*{rem}{Remark}
\newtheorem*{Th}{Theorem}
\newtheorem*{exa}{Example}
\DeclareMathAlphabet\mathbb{U}{fplmbb}{m}{n}
\newcommand{\quest}[1]{\textbf{Question #1.}}
\newcommand{\OO}{\mathbb{O}}
\newcommand{\HH}{\mathbb{H}}
\newcommand{\CC}{\mathbb{C}}
\newcommand{\RR}{\mathbb{R}}     
\newcommand{\QQ}{\mathbb{Q}}
\newcommand{\ZZ}{\mathbb{Z}}     
\newcommand{\NN}{\mathbb{N}}
\newcommand{\defeq}{\mathrel{\mathop{\raisebox{1.3pt}{\scriptsize$:$}}}=}
\newcommand{\eqdef}{\mathrel{=\!\!\mathop{\raisebox{1.3pt}{\scriptsize$:$}}}}
\newcommand\opna{\operatorname}
\newcommand\mf{\mathfrak}
\newcommand\mc{\mathcal}
\newcommand\mass{\operatorname{mass}}
\begin{document}

\title{\Large Filling Invariants of Stratified Nilpotent Lie Groups}
\date{}
\author{\large Moritz Gruber}
\maketitle

\begin{abstract}\noindent\small
\textbf{Abstract. }Filling invariants are measurements of a metric space describing the behaviour of isoperimetric inequalities. In this article we examine filling functions and higher divergence functions. We prove for a class of stratified nilpotent Lie groups that in the low dimensions the 
filling functions grow as fast as the ones of the Euclidean space and in the high dimensions slower than the filling functions of the Euclidean space. We do this by developing a purely algebraic condition on the Lie algebra of a stratified nilpotent Lie group. Further, we find a sufficient criterion for such groups to have a filling function in a special dimension with faster growth as the appropriate filling function of the Euclidean space . Further we bound the higher divergence functions of stratified nilpotent Lie groups.
\end{abstract}

%
%-------------------------------------------------------------------------------------------
%

%\tableofcontents
%
%
%\clearpage

\pagenumbering{arabic}

\setcounter{secnumdepth}{2}

%-----------------------------------------------------------------------------------------------------------------------------------------------------------------------------------------------------------

\section{Introduction}
\vspace*{-5mm}

Filling invariants of metric spaces measure the difficulty of finding admissible fillings of given boundaries.
These invariants are of interest in geometric group theory because their asymptotic growth rates describe the large scale geometry of a metric space.\\
the first of these invariants we are interested in are the \textit{filling functions}. They measure the difficulty to fill Lipschitz cycles by Lipschitz chains.
In the case of the Euclidean space the $(m+1)^{th}$ filling function grows like $\ell^\frac{m+1}{m}$. Similar, Wenger proved that the $(m+1)^{th}$ filling function of a Hadamard space does not grow faster than $\ell^\frac{m+1}{m}$ (see \cite{Wenger05}). And more explicit, Leuzinger showed in the case of symmetric spaces of non-compact type the $(m+1)^{th}$ filling function grows exactly as $\ell^\frac{m+1}{m}$ as long as $m$ is smaller than the rank of the symmetric space and it grows linearly in the dimensions above (see \cite{Leuzinger14}).\\
The second family of filling invariants we are interested in are the \textit{higher divergence functions}. They measure the difficulty to fill an outside an $r$-ball lying Lipschitz cycle with an outside a $\rho r$-ball, $0 < \rho \le 1$, lying Lipschitz chain. For them the situation is much the same as in the case of the filling functions: Their behaviour is well understood for non-positively curved spaces (see for example \cite{BradyFarb}, \cite{Leuzinger2000}, \cite{Wenger06}).\\
So, leaving the world of non-positively curved spaces suggests itself for finding new interesting results. As strictly positively curved spaces are of bounded diameter and so quasi-isometric to points, one has to look at spaces with the whole spectrum of curvature. A rich class of such spaces form the nilpotent Lie groups. These Lie groups have all three types of curvature  in every point (see Wolf \cite{Wolf}).
Burillo and Young computed the filling functions of the complex Heisenberg Groups (see \cite{Burillo}, \cite{Young1} and \cite{YoungII}). One of the in the proof mainly used properties of these groups is that their Lie algebras allow gradings. This property generalises to the class of \textit{stratified nilpotent Lie groups}.\\
We show that such stratified nilpotent Lie groups are suitable to apply the techniques of Burillo and Young, if their Lie algebras satisfy a purely algebraic condition.
So we prove a similar division of Euclidean, super- and sub-Euclidean growth of the filling functions of such stratified nilpotent Lie groups as in the case of the complex Heisenberg Groups. As application, we will see that the Heisenberg Groups over the Hamilton quaternions and over the octonions satisfy the conditions for our theorems.\\
The results presented in this paper were part of the the author's dissertation \cite{Doktorarbeit} at the Karlsruhe Institute of Technology.

\section{Preliminaries}
\vspace*{-5mm}

In this section we collect some of the concepts, facts and notation which will be used in this paper.

\vspace*{-5mm}
\subsubsection{Filling functions}\label{SectionFF}
\vspace*{-5mm}

Filling invariants measure the difficulty to fill a given boundary. The \textit{$(m+1)^{th}$ filling function} does this by describing the difficulty of filling Lipschitz $m$-cycles by Lipschitz $(m + 1)$-chains. 

Let $X$ be a metric space and $m \in \NN$. We denote by $\mc H^m$ the $m$-dimensional Hausdorff-measure of $X$.  The \textit{$m$-dimensional volume} of a subset $A\subset X$ is $\opna{vol}_m(A)\defeq \mc H^m(A)\ .$
Further we denote by $\Delta^m$ the $m$-simplex equipped with an Euclidean metric.
\begin{defi}
%Let $X$ be a metric space and $m\in \NN$.\\
A \textup{Lipschitz $m$-chain} $a$ in $X$ is a (finite) formal sum $a=\sum_j z_j\alpha_j$ of Lipschitz maps $\alpha_j:\Delta^m \to X$ with coefficients $z_j\in \ZZ$. \\
The \textup{boundary} of a Lipschitz $m$-chain $a=\sum_j z_j\alpha_j$ is defined as the Lipschitz $(m-1)$-chain 
$$\partial a=\sum_j \big(z_j \sum_{i=0}^m (-1)^i \alpha_{j|\Delta^m_i} \big)$$
where $\Delta^m_i$ denotes the $i^{th}$ face of $\Delta^m$.\\
A Lipschitz $m$-chain $a$ with zero-boundary, i.e. $\partial a=0$, is called a \textup{Lipschitz $m$-cycle}. \\
A \textup{filling} of a Lipschitz $m$-cycle $a$ is a Lipschitz $(m+1)$-chain $b$ with boundary $\partial b=a$. \\
We define the $\textup{mass}$ of a Lipschitz $m$-chain $a$ as the total volume of its summands: 
$$\mass (a)\defeq \sum_j z_j \opna{vol}_m(\alpha_j(\Delta^m))\ .$$
\end{defi}
If $X$ is a Riemannian manifold, the volume of such a summand is given by $\opna{vol}_m(\alpha_j(\Delta^m))=\int_{\Delta^m} J_{\alpha_j} \opna d \lambda$, where $ \opna d \lambda$ denotes the $m$-dimensional Lebesgue-measure and $J_{\alpha_j}$ is the jacobian of $\alpha_j$. This is well defined, as Lipschitz maps are, by Rademacher's Theorem, almost everywhere differentiable.  

Given a $m$-cycle, one is interested in the best possible filling of it, i.e. the filling with the smallest mass. To deduce a property of the space $X$, one varies the cycle and examines how large the ratio of the mass of the optimal filling and the mass of the cycle can get. This leads to the \textit{filling functions}:

\begin{defi}
Let $n \in \mathbb N$ and let $X$ be a $n$-connected metric space.
For $m\le n$ the \textup{$(m+1)^{th}$ filling function} of $X$ is given by
$$F^{m+1}_X(l)=\sup_a \inf_b \mass(b) \qquad \forall l \in \RR^+,$$
where the infimum is taken over all $(m+1)$-chains $b$ with $\partial b=a$ and the supremum is taken over all  $m$-cycles $a$ with $\mass(a)\le l$.
\end{defi}

As our main interest lies in the large scale geometry of the space $X$, the exact description of the filling functions is of less importance to us. Indeed we only look at the asymptotic growth rate of this functions. We do this by the following equivalence relation, which makes the growth rate of the filling functions an quasi-isometry invariant.

\begin{defi}
Let $f,g:\mathbb R^+ \to \mathbb R^+$ be functions. Then we write $f\preccurlyeq g$ if there is a constant $C>0$ with 
$$f(l) \le Cg(Cl)+Cl+C \quad \forall l \in \mathbb R^+.$$
If $f\preccurlyeq g$ and $g\preccurlyeq f$ we write $f\sim g$. This defines an equivalence relation.
\end{defi}

We read this notation $f \preccurlyeq g$ as ``\textit{$f$ is bounded from above by $g$} " respectively ``\textit{$g$ is bounded from below by $f$} " according whether we are more interested in $f$ or $g$.

\begin{prop}[{see for example \cite[Lemma 1]{YoungH}}]
Let $X$ and $Y$ be $n$-connected metric spaces. Then holds:
$$X \text{ quasi-isometric to }Y \ \Rightarrow\  F^{j+1}_X \sim F^{j+1}_Y \quad \forall j \le n.$$ 
\end{prop}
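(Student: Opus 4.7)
Fix a $(K,C)$-quasi-isometry $f \colon X \to Y$ with quasi-inverse $g \colon Y \to X$; by symmetry it suffices to prove $F^{j+1}_X \preccurlyeq F^{j+1}_Y$ for each $j \le n$. The obstruction to a direct push-forward of chains is that $f$ need not be continuous, so the first step is to replace it by a Lipschitz approximation $\hat f \colon X \to Y$ which tracks $f$ up to bounded additive error on a sufficiently fine net, and similarly for $g$.

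I would construct $\hat f$ skeleton-by-skeleton at a fixed scale $\delta$ depending only on $K$ and $C$. Given a Lipschitz chain in $X$, subdivide each simplex into subsimplices of diameter at most $\delta$, set $\hat f(v)=f(v)$ on the vertices of the subdivision, extend to $1$-cells by geodesic-like paths in $Y$, and extend inductively to higher cells (up to dimension $j+1\le n+1$) by filling the resulting small spheres, which is possible by the $n$-connectedness of $Y$. A uniform local contractibility bound provides uniform control on the mass added at each cell, so the push-forward of a $j$-cycle $a$ with $\mass(a)\le \ell$ yields a Lipschitz $j$-cycle $\hat f_* a$ in $Y$ with $\mass(\hat f_* a)\le K_1\ell + K_1$. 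By definition, $\hat f_* a$ admits a filling $b$ in $Y$ with $\mass(b)\le F^{j+1}_Y(K_1\ell+K_1)$. The analogous procedure with $g$ pulls $b$ back to a $(j+1)$-chain $\hat g_* b$ in $X$ of controlled mass. Its boundary $\hat g_* \hat f_* a$ need not equal $a$, but lies at bounded Hausdorff distance; bridging the two by a ``collar'' chain built simplex-by-simplex from bounded-size fillings in $X$ (again available from $n$-connectedness plus uniform local contractibility), and whose mass is linear in $\mass(a)$, produces an honest filling of $a$ of mass at most $C'F^{j+1}_Y(C'\ell)+C'\ell+C'$ for some constant $C'$ depending only on $K$, $C$, and $n$; this is precisely $F^{j+1}_X\preccurlyeq F^{j+1}_Y$.

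The main obstacle is that both the skeletal extension defining $\hat f$ and the collar construction require more than bare $n$-connectedness: one needs a uniform bound on the mass of fillings of spheres of bounded diameter in dimensions $\le n$. For the spaces relevant here (Riemannian manifolds, Lie groups, uniformly contractible complexes) such a bound is standard, but it must either be folded into the hypotheses or invoked from context. Once this uniform local contractibility is in place, the remaining work is the careful bookkeeping of constants as the subdivision scale, the Lipschitz distortion of $f$ and $g$, and the number of subsimplices interact, together with verifying that the additive ``collar'' term is absorbed by the $+C\ell+C$ slack built into the relation $\preccurlyeq$.
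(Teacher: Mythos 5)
The paper does not give its own proof of this proposition; it cites \cite[Lemma 1]{YoungH}. Your sketch is, in outline, precisely Young's argument: replace the quasi-isometry by a Lipschitz (or ``skeleton-wise'') approximation at a fixed scale, push the cycle to $Y$, fill there, pull back, and bridge the discrepancy between the pulled-back boundary and the original cycle by a collar built from small controlled fillings. So there is no methodological divergence from the source. Your closing paragraph identifies the genuine technical point correctly: bare $n$-connectedness is not enough to run the skeletal extension or the collar construction. One needs a uniform bound on the mass of fillings of small cycles in dimensions $\le n$ (uniform local contractibility together with some bounded-geometry condition, e.g.\ admitting a bounded-geometry triangulation or being a manifold/CW-complex with cocompact group action). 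The paper's formulation is therefore slightly informal; the hypotheses in \cite{YoungH} are stronger than ``$n$-connected metric space,'' and this is not a problem for the paper because all spaces to which it is applied (stratified nilpotent Lie groups with left-invariant Riemannian metrics and their lattices) automatically satisfy these regularity conditions. If you wanted to tighten your sketch into a proof, you would need to add those hypotheses explicitly, state the uniform cone inequality you invoke in each dimension $\le n$, and carry the resulting constants carefully through the bound $\mass(b) \le A F^{j+1}_Y(A\ell) + A\ell + A$ — but the structure of the argument is already correct.
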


Let's look at the example of the filling functions of the $n$-dimensional Euclidean space.

\begin{exa}[{see \cite{FF60}}]
The filling functions of the Euclidean space $\RR^n$ are
$$F^{j+1}_\RR(l) \sim l^\frac{j+1}{j} \quad \text{for } \ j \le n-1.$$ 
\end{exa}

This enables us to use the terms \textit{Euclidean, sub-Euclidean} respectively \textit{super-Euclidean filling function} for filling functions with the same, strictly slower respectively strictly faster growth rate
than $l^\frac{j+1}{j}$.

%
%-------------------------------------------------------------------------------------------------------------
%

\subsubsection{Higher divergence functions}\label{SectionHDF}
\vspace*{-5mm}
Another family of filling invariants are the \textit{higher divergence functions}. They examine the asymptotic geometry of a space by studying quantitatively the topology at infinity. Roughly speaking, they measure the difficulty to fill an outside an $r$-ball lying $m$-cycle with an outside a $\rho r$-ball lying $(m+1)$-chain (for some $0<\rho \le1$).\\
\quad\\
Let $X$ be a simply connected metric space with basepoint $\mathfrak o \in X$ and $r>0$. 
%
%\begin{defi}
We call a Lipschitz chain $a$ in $X$ \textit{$r$-avoidant}, if \ $\opna{image}(a)\cap B_r(\mathfrak o)=\emptyset$. 
%\end{defi}

\begin{figure}[h]
\centering
\includegraphics[width=80mm]{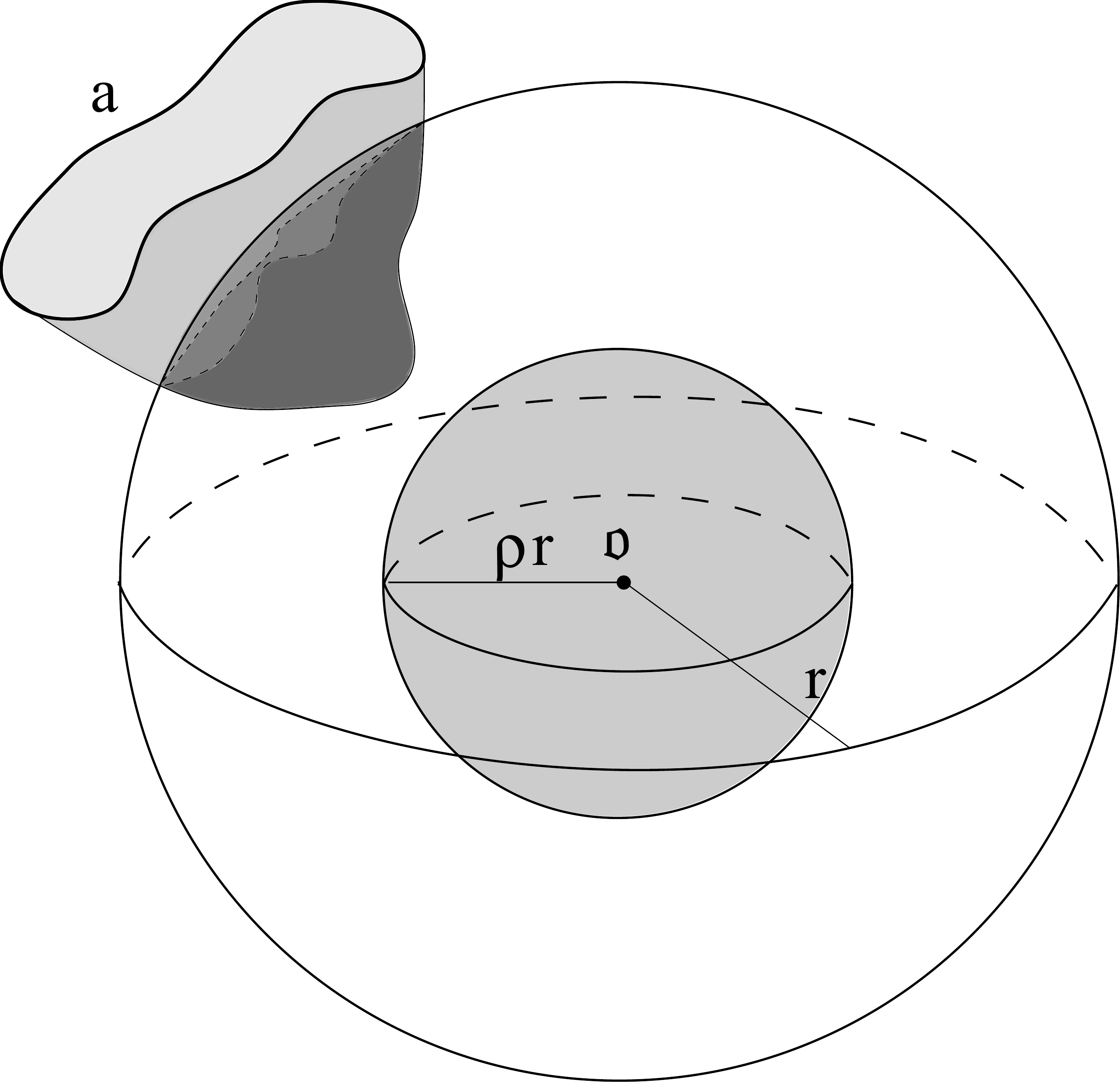}
\caption{An $r$-avoidant cycle \textbf{a} with a $\rho r$-avoidant filling (compare \cite{BradyFarb}).}
\end{figure}
\quad\\
One now wants to fill $r$-avoidant cycles by (nearly) $r$-avoidant chains. To do this, we need the cycle to be a boundary. In contrast to the case of the filling functions, here it doesn't suffice that $X$ is highly connected as the avoidant-condition can be imagined as deleting the $r$-ball around the basepoint. This leads to the following definition:

\begin{defi}
For $0<\rho\le 1$ we call $X$ $(\rho,n)$-\textup{acyclic at infinity}, if every $r$-avoidant Lipschitz $j$-cycle $a$ has a $\rho r$-avoidant filling for all $0\le j\le n$, i.e. there is a $\rho r$-avoidant Lipschitz $(j+1)$-chain $b$ with $\partial b=a$.\\
The \textup{divergence dimension} $\opna{divdim}(X)$ of $X$ is the largest integer $n$, such that $X$ is $(\rho,n)$-acyclic at infinity for some $\rho$.
\end{defi}

It can be easily seen, that the divergence dimension is always smaller than $\dim X -2$, as there are $(\dim X-2)$-cycles homotopic to the boundary of the $r$-ball $B_r(\mf o)$ around $\mf o$. These cycles, of course, are not boundaries of chains in $X \setminus B_r(\mf o)$.

In the following let $m$ be always less or equal to the divergence dimension of $X$. 

The following definition contains some technical parameters, which provide the tolerance needed to make the growth type of higher divergence functions a quasi-isometry invariant. 

\begin{defi}
For $0<\rho\le 1$ and $\alpha >0$ we set
$$\opna{div}^m_{\rho,\alpha}(r)\defeq \sup_a \opna{div}_\rho^m(a,\alpha r^m)\defeq \sup_a\inf_b \mass(b) \ \quad \forall r \in \RR^+,$$
where the infimum is taken over all $\rho r$-avoidant $(m+1)$-chains $b$ with $\partial b=a$ and the supremum is taken over all $r$-avoidant $m$-cycles $a$ with $\mass(a)\le \alpha r^m$.\\
\quad \\
Then the $m^{th}$-\textup{divergence function} of $X$ is the 2-parameter family
$$\opna{Div}^m(X)\defeq \{\opna{div}^m_{\rho,\alpha}\}_{\rho,\alpha}\ .$$
\end{defi}

Above we asked $m$ to be less or equal to the divergence dimension. Alternatively one could set the infimum over the empty set as $\infty$. In this case, the divergence dimension is the biggest number $n \in \NN$, such that there is an $\rho \in (0,1]$ with $\opna{div}^j_{\rho,\alpha} < \infty$ for all $j \le n$.

The functions $\opna{div}_{\rho,\alpha}^m$ are very explicit in terms of the metric. For example if one scales the metric by a constant $c>0$ the functions will scale to $\opna{div}_{\rho,\alpha}^m(c \ \cdot )$. As we are mostly interested in the asymptotic behaviour, we look at the equivalence classes of the higher divergence functions $\opna{Div}^m(X)$ under the below defined equivalence relation for special $2$-parameter families of functions. This makes the equivalence class of $\opna{Div}^m(X)$ an quasi-isometry invariant.

\begin{defi}\quad
\vspace{-5mm}
\begin{enumerate}[a)]
\item A \textup{positive 2-parameter $m$-family} is a 2-parameter family $F=\{f_{s,t}\}$ of functions $f_{s,t}:\RR^+\longrightarrow \RR^+$, indexed over $\ 0<s\le 1,t>0$,   together with a fixed integer $m$. \vspace{-3mm}
\item Let $m\in \mathbb N$ and let $F=\{f_{s,t}\}$ and $H=\{h_{s,t}\}$ be two positive 2-parameter $m$-families, indexed over $\ 0<s\le 1,t>0$.\\
Then we write $F\preceq H$, if there exists thresholds $0<s_0\le 1$ and $t_0>0$, as well as constants $L,M\ge 1$, such that for all $s\le s_0$ and all $t\ge t_0$ there is a constant $A\ge 1$ with:
\vspace{-2mm}
$$f_{s,t}(x)\le A h_{Ls,Mt}(Ax+A)+O(x^m) \ .$$
If both $F\preceq H$ and $H\preceq F$, so we write $F \sim H$. This defines an equivalence \\relation.\\
\end{enumerate}
\end{defi}\vspace{-13.5mm}
\quad\\
We read this notation $F \preccurlyeq H$ as ``\textit{$F$ is bounded from above by $H$}'' respectively \\``\textit{$H$ is bounded from below by $F$} '' depending on whether we are more interested in $F$ or $H$.

A special case is that of a positive 2-parameter $m$-family $F=\{f_{s,t}\}_{s,t}$ bounded from above (or below) by a constant positive 2-parameter $m$-family $H$, i.e. $H=\{h\}_{s,t}$. This means that all functions $f_{s,t}$ are bounded from above (or below) by the same growth type.
As this will often appear in the following, we just write $h$ for the constant positive 2-parameter $m$-family $\{h\}_{s,t}$.

From now on we consider $\opna{Div}^m(X)$ as positive 2-parameter $m$-family, indexed by $\rho$ and $\alpha$.

The relation "$\preceq$" (and consequently "$\sim$") only captures the asymptotic behaviour of the functions for $r \to \infty$\ :
Let $h:\RR_+ \to\RR_+$ be an increasing function. If  $B\ge1$ and $\opna{div}^m_{\rho,\alpha}(B)\le h(B)$ then $\opna{div}^m_{\rho,\alpha}(r)\le B\cdot h(B+B r)  $ for all $r \le B$, because both sides are increasing. So we need to examine the relation "$\preceq$" (and consequently "$\sim$")  only for $r$ larger than an arbitrary constant $B=B(\rho,\alpha)\ge 1$.

The proof of the following proposition can be found in \cite[Prop. 2.2]{ABDDY}. It uses the fact that one can vary the parameter $\rho$ by multiplying the constant $L$ in the equivalence relation and that the constant $A$ is chosen after the parameters $\rho$ and $\alpha$ (at this point there is an error in the appropriate definition of equivalence in \cite{ABDDY}).

\begin{prop} \label{QI}
Let $n\in \mathbb{N}$ and le $X,Y$ be metric spaces with basepoints and let $\opna{divdim}(X)\ge n$ and $\opna{divdim}(Y)\ge n$. Then holds: 
$$X \text{ quasi-isometric to }Y \ \Rightarrow\  \opna{Div}^j(X)\sim \opna{Div}^j(Y) \quad \forall j \le n.$$ 
\end{prop}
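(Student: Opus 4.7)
The plan is to mimic the standard quasi-isometry invariance proof for the filling functions (cf.\ the cited \cite[Lemma 1]{YoungH}), but carefully tracking the two parameters $\rho$ and $\alpha$ and how they transform under the various operations. Let $f:X\to Y$ be a quasi-isometry with quasi-inverse $g:Y\to X$, and fix constants controlling their multiplicative and additive distortion. First I would replace $f$ and $g$ by Lipschitz approximations $F$ and $G$. The usual recipe is to pick a sufficiently fine net in each space, form a Rips-type simplicial approximation up to dimension $n+1$ (which is possible because both spaces are $n$-acyclic at infinity, hence in particular sufficiently connected at the relevant scales once one works far from the basepoint), and extend skeleton-by-skeleton using that the target is $j$-connected in the requisite range. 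The resulting $F$ and $G$ are Lipschitz, agree with $f,g$ up to bounded error, and induce chain maps on Lipschitz chains that scale mass by at most $\opna{Lip}(F)^{m+1}$ resp.\ $\opna{Lip}(G)^{m+1}$.

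Next, given an $r$-avoidant Lipschitz $m$-cycle $a$ in $X$ with $\mass(a)\le \alpha r^m$, I would push it forward to $F_*(a)$ in $Y$. Since $F$ is Lipschitz and coarsely agrees with $f$, the image avoids a ball of radius $L^{-1}r - C$ around the basepoint of $Y$, which for $r$ large enough is at least $(r/L')$-avoidant for some $L'$ depending only on the quasi-isometry data. Its mass is bounded by $\opna{Lip}(F)^m \alpha r^m$, so $F_*(a)$ qualifies as an input for $\opna{div}^m_{Y,\rho',\alpha'}$ at radius $r/L'$ for suitable $\rho'$ and $\alpha'$. Choose a near-optimal $\rho'(r/L')$-avoidant filling $b'$ of $F_*(a)$ in $Y$ with $\mass(b')\le \opna{div}^m_{Y,\rho',\alpha'}(r/L')$, pull it back via $G$ to $X$ to obtain a chain $G_*(b')$ whose mass is controlled by $\opna{Lip}(G)^{m+1}\mass(b')$ and which is still avoidant around the basepoint of $X$, now at radius roughly $\rho'r/(L'L'')$.

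The remaining issue is that $G_*(F_*(a))$ equals $a$ only up to a bounded perturbation, not on the nose. To bridge the difference I would construct a Lipschitz prism/homotopy chain $h(a)$ with $\partial h(a)= G_*(F_*(a)) - a$, whose simplices lie within a bounded neighbourhood of the image of $a$. Because $a$ is $r$-avoidant and the bridge is bounded-distance from $a$, the bridge itself is $(r-\mathrm{const})$-avoidant, and its mass is bounded linearly in $\mass(a)$, i.e.\ by $O(r^m)$. Then $G_*(b')+h(a)$ is the required avoidant filling of $a$, and summing the mass estimates yields an inequality of the form $\opna{div}^m_{X,\rho,\alpha}(r)\le A\,\opna{div}^m_{Y,L\rho,M\alpha}(Ar+A)+O(r^m)$, which is exactly the definition of $\opna{Div}^m(X)\preceq \opna{Div}^m(Y)$. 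Swapping the roles of $X$ and $Y$ gives the other direction.

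The main obstacle, as the author flags just before the statement, is the order of quantifiers: the multiplicative constant $A$ may depend on $\rho$ and $\alpha$, while the constants $L,M$ relating the two families must be universal. Concretely, when one wants to fix a given $\rho$ in the target family, the choice of $\rho'$ in the source is forced by the avoidance losses in pushing and pulling chains, and one has to verify that rescaling $\rho\mapsto L\rho$ (rather than shifting it) absorbs these losses uniformly. This is exactly the point where the appendix error in \cite{ABDDY} had to be repaired, and checking it carefully is the only nontrivial book-keeping in the argument; everything else is a routine Lipschitz-approximation / push-pull computation.
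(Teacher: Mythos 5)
Your sketch is correct and reproduces, at the appropriate level of detail, the argument of the cited reference \cite[Prop.\ 2.2]{ABDDY}, which the paper invokes rather than re-proves; in particular you correctly identify the crucial quantifier subtlety (the universal constants $L,M$ for rescaling $\rho,\alpha$ must be chosen before the $\rho$- and $\alpha$-dependent constant $A$, absorbed by multiplicative rescaling of $\rho$) that the paper explicitly flags as the point needing repair in the ABDDY formulation. A small sign slip aside ($G_*(b')-h(a)$, not $G_*(b')+h(a)$, is the filling with boundary $a$), the push-forward/pull-back and prism bookkeeping is exactly the standard route.
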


Again we look at the example of the $n$-dimensional Euclidean space and its higher divergence functions.
\begin{exa}[{see\cite{ABDDY}}]
The higher divergence functions of the Euclidean space $\RR^n$ are 
$$\opna{Div}^j_{\RR^n}(r) \sim r^{j+1} \quad \text{for }\  j \le n-2=\opna{divdim}(\RR^n).$$
\end{exa}

As in the case of the filling functions, this enables us to use the terms \textit{Euclidean, sub-Euclidean} respectively \textit{super-Euclidean $j^{th}$-divergence} for $j^{th}$-divergence functions with the same, strictly slower respectively strictly faster growth rate
than $r^{j+1}$.

%
%---------------------------------------------------------------------------------------------------------
%

\subsubsection{Filling invariants for integral currents} \label{SectionFIFIC}
\vspace*{-5mm}
We will use integral currents to prove our results for the filling invariants for Lipschitz chains. In order to do this, we modify already existing results concerning filling invariants for integral currents. So we have to introduce isoperimetric inequalities for integral currents. This is mainly done by replacing the words 'Lipschitz chain' by 'integral current' in the respective definitions of filling functions.\\
For an elaborated introduction to integral currents see \cite{AmbrosioKirchheim}.

We denote the set of $m$-dimensional integral currents on a complete metric space $X$ by $\boldsymbol I_m(X)$.

\begin{defi}
Let $X$ be a complete metric space and let $m \in \NN$. Then \textup{$X$ satisfies an isoperimetric inequality of rank $\delta$ for $\boldsymbol I_m(X)$}, if there is a constant $C>0$, such that for every integral current $T\in \boldsymbol I_m(X)$ with $\partial T =0$, there exists an integral current $S \in \boldsymbol I_{m+1}(X)$ with $\partial S=T$ and
$$\boldsymbol M(S) \le C \cdot \boldsymbol M(T)^\delta\ .$$
\end{defi}

%
%---------------------------------------------------------------------------------------------------------
%

\subsubsection{Stratified nilpotent Lie groups}\label{SectionSNLG}
\vspace*{-5mm}

A Lie group $G$ with Lie algebra $\mf g$ is called \textit{nilpotent}, if its lower central series 
$$G=G_1 \rhd G_2 \rhd G_3 \rhd ... \quad \text{ with }\ G_{j+1}=[G,G_j]$$
determines to the trivial group in finitely many steps. Here the bracket $[G, G_j]$ denotes the commutator group, i.e. the group generated by all commutators of elements of $G$ and $G_j$. This condition is equivalent to the condition that the lower central series of the Lie algebra 
$$\mf g = \mf g_1 \ge \mf g_2 \ge \mf g_3 \ge ... \quad \text{ with }\ \mf g_{j+1}=[\mf g, \mf g_j]$$
determines in finitely many steps to the null-space. Here the bracket $[\mf g, \mf g_j]$ denotes the linear subspace of $\mf g$ generated by all brackets of element of $\mf g$ and $\mf g_j$. In both cases the minimal number of steps in the lower central series needed to arrive at the trivial group or at the null-space, respectively, is the same, say $d$. It is called the \textit{degree of nilpotency} of $G$ and $\mf g$. For brevity we call a nilpotent Lie group of nilpotency degree $d$ in the following short \textit{$d$-step nilpotent} Lie group.\\
Our main concern is for a special class of nilpotent Lie groups, the \textit{stratified nilpotent Lie groups}. Their advantage is, that they additionally admit a family of self-similarities which are automorphisms. Further these self-similarities have nice properties concerning left-invariant (sub-)Riemannian metrics on the group. 
\begin{defi}
A \textup{stratified nilpotent Lie group} is a simply connected $d$-step nilpotent Lie group $G$  with Lie algebra $\mf g$ together with a grading
$$\mf g = V_1 \oplus V_2 \oplus ... \oplus V_d$$
with $[V_1,V_j]=V_{1+j}$ where $V_m=0$ if $m>d$.  
\end{defi}

For example, every simply connected $2$-step nilpotent Lie group $G$ is such a stratified nilpotent Lie group with grading $\mf g= V_1 \oplus [\mf g, \mf g]$, where $V_1$ is isomorphic to $\mf g / [\mf g,\mf g]$.

Recall that on a Lie group $G$ any two left-invariant Riemannian metrics are equivalent. This means, if $g$ and $g'$ are left-invariant Riemannian metrics on $G$, then there is a constants $L>0$, such that 
$$\frac{1}{L}\cdot g \le g' \le L\cdot g\ .$$
From this it follows directly, that $(G,g)$ and $(G,g')$ are quasi-isometric. So for our purpose to understand the asymptotic geometry of $G$, both metrics lead to the same results. Therefore it doesn't matter which left-invariant Riemannian metric we choose.\\
Most of time in which we will work explicitly with the Riemannian metric, we will choose, for technical reasons, a left-invariant Riemannian metric such that $V_i$ is orthogonal to $V_j$ whenever $i\ne j$. We call such a metric \textit{fitting to the grading}.

Now we can introduce the above mentioned self-similarities on a stratified nilpotent Lie group.

\begin{defi}
Let $G$ be a stratified nilpotent Lie group with grading $\mf g=V_1 \oplus ... \oplus V_d$ of its Lie algebra.
For every $t>0$ we define the map 
$$\hat s_t: \mf g \to \mf g \ , \ \hat s_t(v_j)\defeq  t^jv_j \quad \text{ for } \ v_j \in V_j.$$
As $\hat s_t$ is an automorphism of the Lie algebra, there is an uniquely determined automorphism $s_t: G \to G$ of the Lie group $G$ with $L(s_t):=d_es_t=\hat s_t$. We call this automorphism $s_t$  \textup{scaling automorphism}.
\end{defi}

The elements of the first layer $V_1$ are called \textit{horizontal}. As they are scaled least of all elements of the Lie algebra $\mf g$, they play an outstanding role.

\begin{defi}\quad \vspace{-5mm}
\begin{enumerate}[a)]
\item Let $M$ be a Riemannian manifold, $G$ be a stratified nilpotent Lie group and $f:M \longrightarrow G$ be a Lipschitz map. \\Then $f$ is \textup{horizontal}, if all the tangent vectors of its image lie in the subbundle 
$$\mathcal H\defeq \bigcup_{g\in G}dL_g V_1$$ 
of the tangent bundle of $G$.
\vspace{-3mm}
\item Let $X$ be a simplicial complex and $f:X \longrightarrow G$ be a Lipschitz map. Then $f$ is $m$-\textup{horizontal}, if $f$ is horizontal on the interior of all $j$-simplices, $j\le m$, of $X$.\\
\end{enumerate}
\end{defi}

\begin{lem}[{see \textup{\cite[2.2.1]{CDPT}}}]
Let $G$ be a stratified nilpotent Lie group.\vspace{-3mm}
\begin{enumerate}[a)]
\item To be horizontal is a left-invariant property, i.e. if $f$ is a horizontal map, then $L_g\circ f$ is a horizontal map for all $g\in G$.
\vspace{-3mm}
\item To be horizontal is invariant under scaling automorphisms, i.e. if $f$ is a horizontal map, then $s_t\circ f$ is a horizontal map for all $t>0$.

\end{enumerate}
\end{lem}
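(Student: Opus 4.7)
The plan is to unwind both claims directly from the definitions. Both parts are essentially bookkeeping with the chain rule, using that $\mathcal H$ is by construction the left-translate of the first layer $V_1$, and that the scaling automorphism $s_t$ sends $V_1$ into itself (indeed, scales it by the factor $t$).

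For part (a), the first step will be to invoke the group law $L_g\circ L_h = L_{gh}$, which upon differentiation gives $dL_g\circ dL_h = dL_{gh}$. For any tangent vector $v$ in the domain of $f$, horizontality of $f$ means $df(v)\in \mathcal H_{f(p)}=dL_{f(p)} V_1$. The chain rule then directly computes
$$d(L_g\circ f)(v)=dL_g\,df(v)\in dL_g\,dL_{f(p)} V_1 = dL_{g\cdot f(p)} V_1 = \mathcal H_{g\cdot f(p)},$$
which is precisely the horizontality condition for $L_g\circ f$.

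For part (b), the key observation I will use is that $s_t$ being a Lie group automorphism forces the intertwining identity $s_t\circ L_g = L_{s_t(g)}\circ s_t$. Differentiating this at $e$ yields $ds_t|_g\circ dL_g|_e = dL_{s_t(g)}|_e\circ \hat s_t$. Since by definition $\hat s_t$ acts on $V_1$ as multiplication by $t$, in particular $\hat s_t V_1 = V_1$. Hence for a horizontal tangent vector of the form $dL_{f(p)} w$ with $w\in V_1$, its image under $ds_t$ equals $dL_{s_t(f(p))}(tw)$, which lies in $\mathcal H_{s_t(f(p))}$. This shows that $s_t\circ f$ is horizontal.

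I do not foresee a real obstacle here: the statement is a one-line consequence of the definitions of $\mathcal H$ and of $s_t$, and the grading assumption enters only through the single fact that $\hat s_t$ preserves the first layer $V_1$. If anything, the one point worth stating carefully is the intertwining identity $s_t\circ L_g = L_{s_t(g)}\circ s_t$, since without it the differential of $s_t$ at a point $g\ne e$ looks opaque; but with it, everything reduces to the behaviour of $\hat s_t$ on $V_1$.
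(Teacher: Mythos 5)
Your proof is correct. The paper does not actually supply a proof of this lemma — it simply cites \cite[2.2.1]{CDPT} — so there is no in-text argument to compare against. Your unwinding of the definitions is the standard one: for (a), the chain rule together with $dL_g\circ dL_{f(p)}=dL_{g\cdot f(p)}$; for (b), the automorphism intertwining identity $s_t\circ L_g=L_{s_t(g)}\circ s_t$, differentiated at the identity to reduce to $\hat s_t(V_1)=V_1$. You correctly isolate that the grading enters only through the invariance of the first layer under $\hat s_t$, which is exactly what makes part (b) work; nothing is missing.
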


We now equip $G$ with a left-invariant Riemannian metric $g$ fitting to the grading of $\mf g$ with associated length metric $\opna d_g$. Then we get the following scaling estimates:
$$\opna d_g(s_t(p),s_t(q))\begin{cases} \le t \cdot \opna d_g(p,q) \quad \text{for } t\le1 \\  \ge t \cdot \opna d_g(p,q)\quad \text{for } t\ge1 \end{cases} \quad \forall p,q\in G.$$
In the above inequalities holds equality in both cases if and only if the distance of $p$ and $q$ is realised by a piecewise horizontal path. So we get in this special case:
$$\opna d_g(s_t(p),s_t(q))= t \cdot \opna d_g(p,q) \quad \forall t>0.$$
This leads to the following important property:

\begin{cor} \label{masslem}
Let $G$ be a stratified nilpotent Lie group with Riemannian metric $g$ fitting to the grading of $\mf g$ and let $a$ be a horizontal Lipschitz $m$-chain in $(G,\opna d_g)$. Further let $t>0$ and $s_t:G \to G$ be a scaling automorphism.
Then holds: 
$$\mass(s_t(a))=t^m \cdot \mass(a)\ .$$
\end{cor}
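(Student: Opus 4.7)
The plan is to reduce to a pointwise statement about Jacobians and then exploit that $s_t$ acts on $V_1$ by multiplication by $t$.

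First I would observe that since $\mass(a)=\sum_j z_j\mathrm{vol}_m(\alpha_j(\Delta^m))$, and since composition with $s_t$ commutes with the chain structure, it suffices to prove the identity for a single horizontal Lipschitz simplex $\alpha:\Delta^m\to G$, i.e.\ that $\mathrm{vol}_m((s_t\circ\alpha)(\Delta^m))=t^m\mathrm{vol}_m(\alpha(\Delta^m))$. Using the formula $\mathrm{vol}_m(\alpha(\Delta^m))=\int_{\Delta^m}J_\alpha\,\mathrm{d}\lambda$ recalled after the definition of mass (valid by Rademacher's theorem, since $\alpha$ and $s_t\circ\alpha$ are Lipschitz and hence almost everywhere differentiable), the statement reduces to the pointwise claim
\[
J_{s_t\circ\alpha}(x)=t^m J_\alpha(x)\qquad\text{for a.e.\ }x\in\Delta^m.
\]

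Next I would analyse $(\mathrm d s_t)_p$ on horizontal vectors. Because $s_t$ is a Lie group automorphism, $s_t\circ L_p=L_{s_t(p)}\circ s_t$; differentiating at $e$ yields $(\mathrm d s_t)_p\circ(\mathrm d L_p)_e=(\mathrm d L_{s_t(p)})_e\circ\hat s_t$. For a horizontal vector $v\in\mathcal H_p=(\mathrm d L_p)_e V_1$, writing $v=(\mathrm d L_p)_e w$ with $w\in V_1$, this gives $(\mathrm d s_t)_p v=(\mathrm d L_{s_t(p)})_e(tw)$. Because $g$ is left-invariant and fitting to the grading, $|(\mathrm d L_q)_e u|_g=|u|_{g_e}$ for all $q,u$, and in particular $|(\mathrm d s_t)_p v|_g=t|w|_{g_e}=t|v|_g$. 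Hence $(\mathrm d s_t)_p$ preserves $\mathcal H_p$ and scales the metric on $\mathcal H_p$ by the factor $t$.

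Now I would apply this to $\alpha$. Horizontality means that at a.e.\ $x\in\Delta^m$ the image of $(\mathrm d\alpha)_x$ is contained in $\mathcal H_{\alpha(x)}$. The chain rule gives $(\mathrm d(s_t\circ\alpha))_x=(\mathrm d s_t)_{\alpha(x)}\circ(\mathrm d\alpha)_x$. Since $(\mathrm d s_t)_{\alpha(x)}$ multiplies every horizontal vector's norm (and thus every horizontal inner product) by $t$ (resp.\ $t^2$), the Gram matrix of $(\mathrm d(s_t\circ\alpha))_x$ with respect to a basis of $T_x\Delta^m$ is $t^2$ times the Gram matrix of $(\mathrm d\alpha)_x$. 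Taking the square root of the determinant yields $J_{s_t\circ\alpha}(x)=t^m J_\alpha(x)$ almost everywhere. Integrating over $\Delta^m$ and summing over the summands of $a$ with their integer coefficients gives $\mass(s_t(a))=t^m\mass(a)$.

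The main delicate point is the pointwise scaling computation: one must use both the left-invariance of $g$ and that $s_t$ restricted to $V_1$ is exactly $t\cdot\mathrm{id}$ (so the higher layers $V_2,\dots,V_d$, which are scaled by larger powers of $t$ and would spoil the identity, never enter because $\alpha$ is horizontal). Everything else — Rademacher, the Jacobian formula, and the additivity of mass over summands — is standard.
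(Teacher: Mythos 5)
Your proof is correct and follows essentially the approach the paper intends: the paper itself gives no explicit proof of the corollary, but its later reference to ``the proof of Lemma~\ref{masslem}'' in Proposition~\ref{toplow} (computing the determinant of $\hat s_t$ restricted to the tangent space of the image) confirms that the intended argument is exactly this pointwise Jacobian computation using $\hat s_t|_{V_1}=t\cdot\mathrm{id}$ and left-invariance of $g$. One small remark: the hypothesis ``fitting to the grading'' plays no role in your computation, since a horizontal vector $v\in V_1$ has $\hat s_t(v)=tv\in V_1$ and left-invariance alone gives the factor $t$; it only becomes relevant for the non-horizontal chains treated in Proposition~\ref{toplow}.
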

%%
%\begin{proof}
%As the mass of a Lipschitz $m$-chain is defined as the sum of the $m$-dimensional volume of its summands, it suffices to prove
%%
%$$\opna{vol}_m(s_t\circ a)=t^m \cdot \opna{vol}_m(a)$$
%%
%for any horizontal Lipschitz $m$-chain $a=\alpha_1: \Delta^m \to G .$\\
%By the definition of the scaling automorphism, we have $\exp^{-1}\circ s_t = \hat s_t \circ \exp^{-1}$. So we get:
%%
%\begin{align*}
%\opna{vol}_m(s_t\circ a)&=\int_{\Delta^m} J_{s_t\circ a}(x)\ d \lambda \\
%&=\int_{\Delta^m} \det D\big((\exp^{-1}\circ L_{(s_t\circ a(x))^{-1}})\circ (s_t\circ a)\big)(x)\ d \lambda\\
%&\stackrel{(1)}=\int_{\Delta^m}\det D \big( \exp^{-1}\circ s_t \circ L_{a(x)^{-1}} \circ a \big)(x)\ d \lambda \\
%&=\int_{\Delta^m}\det D \big( \hat s_t \circ \exp^{-1}\circ L_{a(x)^{-1}} \circ a \big)(x)\ d \lambda \\
%&\stackrel{(2)}=\int_{\Delta^m} \det \Big( \begin{pmatrix} t && \\ & \ddots & \\ &&t \end{pmatrix} \cdot D(\exp^{-1}\circ L_{a(x)^{-1}} \circ a)(x)\Big)\ d\lambda\\
%&=\int_{\Delta^m} t^m \cdot \det D((\exp^{-1} \circ L_{a(x)^{-1}})\circ a)(x)\ d \lambda \\
%&= t^m \cdot \int_{\Delta^m} J_a(x)\ d \lambda\\
%&=t^m \cdot \opna{vol}_m(a)
%\end{align*}
%The equality $(1)$ holds as $s_t$ is a group automorphism. As $a$ is a horizontal $m$-chain, we have that $D(\exp^{-1}\circ L_{a(x)^{-1}} \circ a)(x)$ always lies in a $m$-dimensional subspace of $V_1$ and so is linearly scaled by $\hat s_t$. Therefore equality $(2)$ holds.
%\end{proof}

%
%
On a  stratified nilpotent Lie group there is another interesting metric. It is called the \textit{Carnot-Carath\'eodory metric}. It is the left-invariant sub-Riemannian metric $\opna d_c$ induced by $\mc H$. This means it is the length metric defined by the length with respect to the Riemannian metric $g$ of horizontal curves: 
$$d_c(p,q)\defeq \inf \{\opna{Length}(c)\mid c \text{ piecewise horizontal curve with } c(0)=p,\ c(1)=q \}$$ 
A stratified nilpotent Lie group equipped with its Carnot-Carath\'eodory metric is called a \textit{Carnot group}.
For the Carnot-Carath\'eodory metric, the nicest possible scaling behaviour holds:
$$\opna d_c(s_t(p),s_t(q))=t \cdot \opna d_c(p,q) \quad \forall p,q\in G.$$
The Carnot-Carath\'eodory metric is a length metric using the same length functional as the Riemannian distance. Further is the class of admissible curves a subset of all the piecewise smooth curves which are admissible for the Riemannian distance.
So we get the following relation between the two metric spaces $(G,\opna d_c)$ and $(G,\opna d_g)$, which later will become important: 

\begin{lem} \label{1lip}
Let $G$ be a stratified nilpotent Lie group with left-invariant Riemannian metric $g$ and associated length metric $\opna d_g$ and induced Carnot-Carath\'eodory metric $\opna d_c$.
Then the identity map 
$$\iota: (G,\opna d_c) \to (G, \opna d_g),\ x \mapsto x$$
is $1$-Lipschitz, i.e.
$$\opna d_g(x,y) \le \opna d_c(x,y) \quad \text{ for all } x,y \in G.$$
\end{lem}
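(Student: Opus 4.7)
The plan is to exploit the fact that the two distances $d_g$ and $d_c$ arise as infima of the \emph{same} length functional (the Riemannian length induced by $g$) over two nested classes of admissible curves. First I would write out the definitions side by side: by construction of the associated length metric,
$$d_g(x,y)=\inf\{\opna{Length}(c)\mid c\text{ piecewise smooth curve with }c(0)=x,\ c(1)=y\},$$
while the Carnot--Carath\'eodory distance is defined as
$$d_c(x,y)=\inf\{\opna{Length}(c)\mid c\text{ piecewise horizontal curve with }c(0)=x,\ c(1)=y\},$$
with the length of $c$ computed via $g$ in both cases, as emphasised just before the statement.

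Next I would observe that every piecewise horizontal curve is in particular a piecewise smooth curve in $G$, so it is admissible for the computation of $d_g$. Hence the collection of curves over which the infimum defining $d_c(x,y)$ is taken is a subset of the collection over which the infimum defining $d_g(x,y)$ is taken. Since both infima apply the same nonnegative functional $\opna{Length}$ to these sets, passing to the smaller set cannot decrease the infimum, yielding $d_g(x,y)\le d_c(x,y)$ for all $x,y\in G$. Equivalently, $\iota$ is $1$-Lipschitz.

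The only point one should be slightly careful about is the finiteness of $d_c$, i.e.\ that any two points of $G$ can actually be joined by a piecewise horizontal curve so that the infimum is finite and the inequality non-vacuous. This is standard for stratified nilpotent Lie groups: the horizontal distribution $\mathcal H$ is generated by $V_1$, and by the stratification $[V_1,V_j]=V_{1+j}$ iterated brackets of $V_1$ span all of $\mf g$, so the Chow--Rashevsky theorem guarantees horizontal connectivity. With this in hand there is no remaining obstacle; the argument is essentially a one-line infimum-over-a-subset comparison.
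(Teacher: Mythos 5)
Your argument is exactly the one the paper gives in the paragraph preceding the lemma: both distances use the same length functional, the horizontal curves form a subset of the piecewise smooth curves, and the infimum over a subset can only be larger. The extra remark about Chow--Rashevsky connectivity is a sensible sanity check but not needed for the inequality itself.
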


In the following definition we look more analytically at $V_1$, such that we can define certain possible properties of subspaces of $V_1$. Later the presence of these properties will be very useful.

\begin{defi}
Let $G$ be a stratified nilpotent Lie group of dimension $n$ with Lie algebra $\mf g$. Let $\mc H$ be the horizontal distribution induced by the first layer $V_1$. Further let $n_1=\dim V_1$. Then this distribution can be described as the set of common zeros of a set of $1$-forms $\{\eta_1,...,\eta_{n-n_1}\}$. These forms induce a (vector-valued) form 
$$\Omega=(\omega_1,...,\omega_{n-n_1}): \Lambda^2 V_1 \to \mf g/V_1 \cong\mathbb R^{n-n_1},$$
the \textup{curvature form}, where the $\omega_i$ denote the differentials $\omega_i\defeq d\eta_i $ for \\$ i=1,...,n-n_1$. \\
Let $(\sigma_{ij}) \in \RR^{(n-n_1)\times k}$. For a $k$-dimensional subspace $S\subset V_1$ consider the system of equations 
$$\omega_i(\xi,X_j)=\sigma_{ij} \quad i=1,...,n-n_1 \text{ and } j=1,...,k$$ 
where $\{X_j\}$ is a basis of $S$.\\
 Then $S$ is called \textup{$\Omega$-regular}, if for any $(\sigma_{ij}) \in \RR^{(n-n_1)\times k}$ the system of equations has a solution $\xi \in V_1$.\\
Further a subspace $S\subset V_1$ is called \textup{$\Omega$-isotropic}, if $\Omega$ restricted to $\Lambda^2S$ is the zero form.
\end{defi}

Let $b_1,...,b_n$ be a basis of the Lie algebra $\mf g$, such that $b_1,...,b_{n_1}$ span the first layer $V_1$. Then one can choose the $1$-forms $\eta_i$ as the dual forms of the remaining basis vectors $b_{n_1+1},...,b_n$:
$$ \eta_i=b_{n_1+i}^* \qquad i \in \{1,...,n-n_1\} \ .$$
Using the formula
$$(p+1)!(\opna d \gamma)(X_0,...,X_p)=\sum_{i<j}(-1)^{i+j+1}\gamma([X_i,X_j],X_0,...,\hat X_i,...,\hat X_j,...,X_p)$$
for the differential of a left-invariant $p$-form $\gamma$, one gets
$$\omega_i(X_0,X_1)=\frac{1}{2} \cdot b_{n_1+i}^*([X_0,X_1])\ .$$
So we see, that an $\Omega$-isotropic subspace $S$ is nothing else than an abelian Lie sub-algebra, which is totally contained in the first layer $V_1$ of the Lie algebra.\\
Further we can interpret the property ``$\Omega$-regular'', as something like \textit{in general position}.

We close this section mentioning two important properties of Carnot groups:

\begin{prop}[{see \cite{Pansu}}]
Let $G$ be a stratified nilpotent Lie group equipped with a left-invariant Riemannian metric $g$ with associated length metric $\opna d_g$. Then the metric spaces $(G, \frac{1}{r}\opna d_g,e)$ converge in the pointed Gromov-Hausdorff sense for $r \to \infty$ to $(G,\opna d_c,e)$, where $\opna d_c$ denotes the Carnot-Carath\'eodory metric.
\end{prop}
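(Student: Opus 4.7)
The plan is to produce an explicit approximation using the scaling automorphisms. For each $r > 0$ I would consider the bijection $\phi_r = s_r : (G, d_c, e) \to (G, \tfrac{1}{r} d_g, e)$, which fixes the basepoint. Since $\phi_r$ is surjective, pointed Gromov-Hausdorff convergence will follow at once from the standard approximation criterion as soon as one establishes that on every fixed $d_c$-ball $B_R(e)$,
$$
\sup_{p, q \in B_R(e)} \bigl| \tfrac{1}{r} d_g(s_r(p), s_r(q)) - d_c(p, q) \bigr| \xrightarrow{r \to \infty} 0.
$$
This is the quantitative statement I would target, and everything reduces to controlling this uniformly in $R$.

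The upper bound is essentially free: by Lemma \ref{1lip} and the exact scaling $d_c(s_r(p), s_r(q)) = r \cdot d_c(p, q)$, one already has $\tfrac{1}{r} d_g(s_r(p), s_r(q)) \le d_c(p, q)$. For the matching lower bound I would aim to establish a ball--box type comparison
$$
d_c(x, y) \;\le\; d_g(x, y) + C \cdot d_g(x, y)^{(d-1)/d} \qquad \text{for all } x, y \in G,
$$
where $d$ is the nilpotency degree of $G$ and $C$ depends only on $G$ and $g$. Plugging $x = s_r(p)$, $y = s_r(q)$, using the trivial bound $d_g \le d_c$ on the right, and dividing by $r$, the contribution of $p, q \in B_R(e)$ turns the error into at most $C \cdot r^{-1/d} \cdot R^{(d-1)/d}$, which tends to $0$ as $r \to \infty$, uniformly on $B_R(e)$.

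The main obstacle is proving the comparison estimate above; the rest of the argument is formal. The natural approach is to take a minimizing Riemannian path $\gamma$ from $x$ to $y$, decompose $\dot\gamma$ into its horizontal component and its components along the higher layers $V_2, \dots, V_d$ (using that $g$ is fitting to the grading), and then replace the non-horizontal motion by horizontal commutator loops supplied by the Baker--Campbell--Hausdorff formula, using the bracket generating condition $[V_1, V_j] = V_{j+1}$. The key accounting fact needed is that an element of $V_j$ can be reached from the origin by a horizontal curve whose $d_c$-length is of order (its Euclidean norm)$^{1/j}$; summing these corrections layer by layer produces exactly the sub-linear exponent $(d-1)/d$, which is what makes the error term vanish after the final rescaling. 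Once this comparison is in place, converting the uniform estimate on balls into pointed Gromov--Hausdorff convergence is routine (choose a finite $\varepsilon$-net in $B_R(e) \subset (G, d_c)$ and use its $\phi_r$-image as an $\varepsilon$-net in $B_{R + \varepsilon}(e) \subset (G, \tfrac{1}{r} d_g)$, with mutual distances matched up to $\varepsilon$).
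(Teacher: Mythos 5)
The paper does not prove this proposition; it is stated with a citation to Pansu, so there is no internal argument to compare against. Your overall reduction is nevertheless exactly the right one and matches the structure of the proofs in the literature (Pansu, and later Breuillard and Breuillard--Le Donne for the quantitative rate): use the scaling automorphisms $s_r$ as comparison maps, obtain the one-sided inequality $\tfrac{1}{r}d_g(s_r p, s_r q)\le d_c(p,q)$ for free from Lemma \ref{1lip} together with $d_c\circ(s_r\times s_r)=r\,d_c$, and reduce the matching lower bound to a quantitative comparison of the form $d_c\le d_g+\text{(sub-linear error)}$. The final passage from such a uniform estimate to pointed Gromov--Hausdorff convergence, including near-surjectivity of $s_r$ onto the rescaled ball, is indeed routine once the estimate is in hand.

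The genuine gap is that the comparison estimate \emph{is} the theorem, and your sketch of it has concrete problems. As you state it, $d_c(x,y)\le d_g(x,y)+C\,d_g(x,y)^{(d-1)/d}$ for all $x,y$ is false whenever $d\ge 3$: at small scales the ball--box theorem gives $d_c\asymp d_g^{1/d}$, which dominates $d_g^{(d-1)/d}$ for $d_g<1$, so the inequality can only hold above some fixed distance threshold, and you must treat $d_c$-close pairs $p,q$ by a separate (easy) case. Next, the exponent $(d-1)/d$ does not actually follow from the accounting you give: the rule that $V_j$ at Euclidean norm $L$ is reached horizontally at $d_c$-cost about $L^{1/j}$ yields a total error $\sum_{j\ge 2}L^{1/j}$, whose dominant term for large $L$ is $L^{1/2}$; moreover the drift accumulated in $V_j$ along a length-$L$ path is not simply of size $L$, since the Baker--Campbell--Hausdorff development contributes iterated-commutator terms of higher order (already quadratic in the Heisenberg case), so the arithmetic is more delicate than stated. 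Most importantly, ``replacing the non-horizontal motion by horizontal commutator loops'' cannot be done loop-by-loop along the geodesic: inserting a horizontal correction at time $t$ conjugates the remaining portion of the path and changes all subsequent drifts, so the corrections across the layers interact and must be organised across scales. This bookkeeping is precisely the technical content of Pansu's proof and of Breuillard's quantitative refinement, and the sketch does not address it. Since any sub-linear error suffices for the proposition, a corrected version of your plan would still succeed, but the comparison estimate needs either a genuine proof or a citation to the literature where it is established.
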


This means, that the group $G$ equipped with its Carnot-Carath\'eodory metric, is the (up to isometry) unique asymptotic cone of $(G,\opna d_g)$.

It can be shown (see \cite[Theorem 2]{Mitchell}), that the Hausdorff-dimension $D$ of $(G,\opna d_c)$ is given by 
$$D = \sum_{j=1}^d j \cdot \dim V_j$$
where $d$ is the degree nilpotency of $G$ and the $V_j$ are the summands of the grading of the Lie algebra $\mf g$. We will see this number again, when we establish the equivalence classes of high-dimensional filling functions of stratified nilpotent Lie groups.

%-----------------------------------------

\section{Filling functions of stratified nilpotent Lie groups}
\vspace*{-5mm}
We start with our results for the filling functions of stratified nilpotent Lie groups.

\vspace*{-5mm}
\subsection{Results}
\vspace*{-5mm}
We will prove, that the existence of a $(k+1)$-dimensional, $\Omega$-regular abelian subalgebra in the first layer of the Lie algebra leads to Euclidean filling functions up to dimension $k+1$. We additionally have to assume, for technical reasons,  the existence of a scalable lattice in $G$. We will see, that in the case of $2$-step nilpotent Lie groups this assumption can be dropped. \\
Remember the notation $s_t:G \to G\ , \ t>0,$ for the scaling automorphisms of a stratified nilpotent Lie group (see Section \ref{SectionSNLG}).

\begin{Thm}[{Euclidean filling functions}]\label{Thm1}
Let $G$ be a stratified nilpotent Lie group equipped with a left-invariant Riemannian metric. Further let $\mf g$ be the Lie algebra of $G$  and $V_1$ be the first layer of the grading and let $k \in \mathbb N$. If there exists a  lattice $\Gamma\subset G$ with $s_2(\Gamma)\subset\Gamma$ and a $(k+1)$-dimensional $\Omega$-isotropic, $\Omega$-regular subspace $S\subset V_1$,  then holds:
$$F^{j+1}_G(l) \sim l^\frac{j+1}{j} \quad \text{for all } j \le k.$$
Let $d$ denote the degree of nilpotency of $G$. Then further holds \
$F^{k+2}_G(l) \preccurlyeq l^{\frac{k+1+d}{k+1}}$.

\end{Thm}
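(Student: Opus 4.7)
The plan is to adapt the scheme developed by Burillo and Young for complex Heisenberg groups \cite{Burillo,Young1,YoungII} to this more general stratified setting. For the lower bound when $j\le k$, note that $\Omega$-isotropy of $S\subset V_1$ makes $S$ an abelian subalgebra contained in the horizontal layer, so $H\defeq\exp(S)$ is an abelian subgroup whose restricted left-invariant metric is isometric (via $\exp$) to the Euclidean metric on $\RR^{k+1}$. A round $j$-sphere in $H$ of radius $r$ is then a $j$-cycle in $G$ of mass $\sim r^j$. To rule out cheap fillings in $G$, I would use the $\Omega$-regular property of $S$ to construct a mass-non-increasing projection from a neighbourhood of $H$ onto $H$; $\Omega$-regularity guarantees that every vertical direction in $\mf g/V_1$ arises as a bracket $[X,\xi]$ with $X\in S$, which is exactly what makes such a ``sliding'' projection cheap. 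A cheap filling of the $j$-sphere in $G$ would then descend to a cheap filling in $H\cong\RR^{k+1}$, contradicting the standard Euclidean lower bound $F^{j+1}_{\RR^{k+1}}(l)\sim l^{(j+1)/j}$.

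\textbf{Upper bounds.} By the equivalence between Lipschitz-chain and integral-current filling invariants (Section \ref{SectionFIFIC}), it suffices to work with currents in $\boldsymbol I_j(G)$. The heart of the argument is a uniform bound for the filling at unit scale: I would use a Federer--Fleming deformation to push any bounded-mass integral $j$-cycle onto the skeleton of a lattice complex associated to $\Gamma$, and then fill cell by cell. For $j\le k$ the $\Omega$-isotropy of $S$ provides genuinely horizontal $(j+1)$-chains filling the individual $j$-cells, while $\Omega$-regularity ensures that the vertical corrections arising in the deformation can be realised cheaply by horizontal commutators. Given an arbitrary $j$-cycle $a$ of mass $l$, set $t\defeq l^{-1/j}$ so that $s_t(a)$ has bounded mass, apply the unit-scale bound to obtain a horizontal filling $b_0$ of $s_t(a)$ of bounded mass, and set $b\defeq s_{1/t}(b_0)$; Corollary \ref{masslem} then gives
\begin{equation*}
\mass(b)=t^{-(j+1)}\mass(b_0)\le C\,l^{(j+1)/j}.
\end{equation*}
The assumption $s_2(\Gamma)\subset\Gamma$ is exactly what makes the dyadic coarsening of Federer--Fleming compatible with the scaling automorphisms. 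For $j=k+1$ the same strategy applies, but since $\dim S=k+1$ the filling $b_0$ cannot be horizontal: at least one tangent direction of any $(k+2)$-chain has to lie in a higher layer $V_i$ with $i\ge 2$, and such a direction scales by $t^i$, at worst $t^d$. Rescaling $b_0$ by $s_{1/t}$ therefore multiplies its mass by at most $t^{-(k+1+d)}$, and with $t=l^{-1/(k+1)}$ this yields $\mass(b)=O(l^{(k+1+d)/(k+1)})$.

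\textbf{Main obstacle.} The principal difficulty lies in the unit-scale filling step inside the lattice complex: producing, for every combinatorial $j$-cell of $\Gamma$, a horizontal (respectively $(k+1)$-horizontal, in the top-dimensional case) filling of its boundary with uniformly bounded mass, and simultaneously arranging the Federer--Fleming deformation so that it respects both the horizontal distribution $\mc H$ and the $s_2$-invariance of $\Gamma$. It is precisely here that the combination of $\Omega$-isotropy and $\Omega$-regularity has to be translated into an explicit geometric construction out of brackets in $S$, and this is the technical core of the argument.
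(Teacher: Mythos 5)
Your overall strategy matches the paper's (Burillo-type lower bounds, Young-type upper bounds via a Federer--Fleming deformation onto a lattice skeleton, the $h$-principle to supply horizontal cell fillings, and a cone construction for $F^{k+2}_G$). The concrete gap is in the lower-bound step: you propose to "use the $\Omega$-regular property of $S$ to construct a mass-non-increasing projection from a neighbourhood of $H=\exp(S)$ onto $H$" by some bracket-sliding mechanism, but $\Omega$-regularity does not yield such a retraction, and if you tried to build it this way you would get stuck. The projection that does work is much simpler and has nothing to do with $\Omega$-regularity: for a left-invariant metric fitting the grading, the abelianization $\pi\colon G\to G/[G,G]\cong V_1$ is $1$-Lipschitz (its differential is an orthogonal projection in every tangent space, by left-invariance), and composing with the orthogonal projection $V_1\to S$ gives a globally defined $1$-Lipschitz retraction onto $\exp(S)\cong \RR^{k+1}$. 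Pushing a filling forward along this map is what yields the Euclidean lower bound; the only hypothesis used is $\Omega$-isotropy, which makes $S$ an abelian subalgebra and $\exp(S)$ a flat integral submanifold. This is exactly parallel to the paper's Proposition~\ref{Propeta}, whose Burillo-style calibration $\gamma=X_1^*\wedge\cdots\wedge X_{j+1}^*$ is closed precisely because $S$ is $\Omega$-isotropic (and indeed the paper's high-dimensional lower bound, Proposition~\ref{toplow}, drops $\Omega$-regularity from the hypotheses altogether). $\Omega$-regularity is indispensable only on the upper-bound side, where it makes Gromov's sheaf of horizontal $\Omega$-regular immersions microflexible and locally integrable (Lemma~\ref{localhlem}), and hence furnishes the horizontal approximation of a $\Gamma$-adapted triangulation required by Young's theorems~\ref{ThmYoung} and~\ref{ThmYoungII}.

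A secondary caveat: your one-shot rescaling $t=l^{-1/j}$, fill at unit scale, rescale back, assumes the unit-scale filling $b_0$ is horizontal, which it cannot be along the (generally non-horizontal) rescaled cycle $s_t(a)$. The paper avoids this by invoking Young's filling theorem, which recursively combines the Federer--Fleming deformation with a horizontal approximation of the skeleton and a scale-compatible homotopy $\psi$ between the triangulations at adjacent dyadic scales; only the skeleton-supported part of the filling needs to be horizontal, and the $s_2(\Gamma)\subset\Gamma$ hypothesis is what ties the scales together. You correctly flag this as the technical core; the paper resolves it via the folded approximation theorem (Proposition~\ref{approx} / Corollary~\ref{LemGr2}). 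Your description of the $F^{k+2}_G$ bound, using a cone over the horizontal $(k+1)$-skeleton whose one remaining tangent direction scales at worst by $t^d$, coincides with Proposition~\ref{dimplus}.
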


The upper bound on $F^{k+2}_G$ is the (higher dimensional) analogue to Gromov's bound \ $\delta_\Gamma(n) \preccurlyeq n^{1+d}$ \ on the ($1$-dimensional) Dehn function for nilpotent groups in \textup{\cite[5.A$'$.5]{GGT}} (see also \textup{\cite{Pittet95}}) as there is the relation $\delta^{k+1}\preccurlyeq F^{k+2}$ for $k\ge 1$.

\begin{rem}
Gromov proved in \textup{\cite{GGT}} the following dimension-formula for $\Omega$-isotropic, $\Omega$-regular subspaces $S\subset V_1$: 
$$\dim V_1 - \dim S \ge \dim S (\dim \mf g -\dim V_1) \quad (*)$$
For the existence of $\Omega$-isotropic, $\Omega$-regular subspaces this means, that the horizontal distribution has to be large, i.e. $\dim V_1 >> \opna{codim}_\mf g V_1$. This formula comes from the fact, that the $\Omega$-isotropy and $\Omega$-regularity of $S$ implies that the linear map 
$$\Omega_\bullet :V_1 \to \opna{Hom}(S,\mf g / V_1), X \mapsto \Omega(X, \_)$$
is surjective and vanishes on $S$. The left hand side in the above inequality equals the dimension of $V_1 / S$ and the right hand side the dimension of $\opna{Hom}(S,\mf g / V_1)$. As $S$ is in the kernel of $\Omega_\bullet$ we get by the surjectivity of $\Omega_\bullet$ the inequality as necessary condition. \\
And on the other hand Gromov proved, that $(*)$ is sufficient for  \textup{generic} $\Omega$, i.e. for a class of forms, which form an open and everywhere dense subset.
\end{rem}

Our second result refers to ``high dimensions''. Again we assume the existence of a scalable lattice and a $(k+1)$-dimensional abelian $\Omega$-regular subalgebra in the first layer of the Lie algebra.. Then we can prove sub-Euclidean filling functions in the $k$ dimensions below the dimension of the group (if the group is not abelian). So the geometry of stratified nilpotent Lie groups behaves not Euclidean in high dimensions.

\begin{Thm}[{Sub-Euclidean filling functions}]\label{Thm2}
Let $G$ be an $n$-dimensional stratified nilpotent Lie group equipped with a left-invariant Riemannian metric. Further let $\mf g$ be the Lie algebra of $G$  with grading $\mf g=V_1\oplus ... \oplus V_d$. Denote by $D =  \displaystyle{\sum_{i=1}^d i \cdot \dim V_i}$ \  the Hausdorff-dimension of the asymptotic cone of $G$ and let $k\in \NN$. If there exists a  lattice $\Gamma\subset G$ with $s_2(\Gamma)\subset\Gamma$ and  a $(k+1)$-dimensional $\Omega$-regular, $\Omega$-isotropic subspace $S \subset V_1$, then holds:
$$F^{n-j}_G(l) \sim l^\frac{D-j}{D -j-1} \quad \text{for all } j \le k-1.$$
\end{Thm}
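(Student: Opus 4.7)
The plan is to prove matching upper and lower bounds $F^{n-j}_G(l) \sim l^{(D-j)/(D-j-1)}$, using scaling and a filling produced in the asymptotic cone.

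\textbf{Upper bound.} Given a Lipschitz $(n-j-1)$-cycle $a$ with $\mass(a) \le l$, I would first pass to the integral current setting from Section \ref{SectionFIFIC}, approximating $a$ by an integral $(n-j-1)$-current $T$ of comparable mass. The key geometric input is the $(k+1)$-dimensional $\Omega$-isotropic, $\Omega$-regular subspace $S \subset V_1$. Before rescaling, I would deform $T$ into a current whose tangent structure is \emph{maximally transverse to the horizontal distribution}: at each regular point the tangent space contains all of $V_2,\dots,V_d$ together with $\dim V_1 - (j+1)$ horizontal directions drawn from $S$. The constraint $j \le k-1$ ensures there are enough horizontal directions available in $S$ for such a deformation; $\Omega$-isotropy keeps no new vertical mass from being introduced along the homotopy, while $\Omega$-regularity absorbs the vertical corrections produced during the adjustment.

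For such a maximally transverse current the Jacobian of $s_{1/\lambda}$ is exactly $\lambda^{-(D-j-1)}$ (the sum of weights $1\cdot(\dim V_1-j-1)+\sum_{i\ge 2} i\cdot\dim V_i = D-j-1$). Setting $\lambda := l^{1/(D-j-1)}$ makes $\mass(s_{1/\lambda}(T))$ of order one, and the support is contained in a ball of bounded Carnot--Carath\'eodory diameter. Apply now a Wenger-type isoperimetric inequality for integral currents in the Carnot group $(G,\opna d_c)$, which is the asymptotic cone of $(G,\opna d_g)$ by Pansu's theorem: cycles of bounded mass in this space admit fillings of bounded mass. Choose such a filling $U$ of $s_{1/\lambda}(T)$, again arranged to be maximally transverse. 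Scaling back, the $(n-j)$-chain $s_{\lambda}(U)$ fills (a small deformation of) $T$ and its mass scales by $\lambda^{D-j}$, yielding $\mass(s_\lambda(U)) \preccurlyeq \lambda^{D-j} = l^{(D-j)/(D-j-1)}$. The lattice condition $s_2(\Gamma) \subset \Gamma$ is invoked to discretise this scaling procedure and to build the cellular approximation cocompactly, as in the Burillo--Young approach.

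\textbf{Lower bound.} I would exhibit a family of test cycles. Using $\Gamma$ and the scaling automorphism $s_2$, construct for every $R$ a ``Carnot box'' of Carnot--Carath\'eodory scale $R$ obtained by suppressing $j$ directions from $V_1$: this is an $(n-j)$-chain whose Riemannian mass is $\sim R^{D-j}$ and whose boundary is an $(n-j-1)$-cycle of Riemannian mass $\sim R^{D-j-1}$. Any filling of this boundary must contain the enclosed region, so its mass is at least $\sim R^{D-j}$; setting $l = R^{D-j-1}$ gives the required lower bound $l^{(D-j)/(D-j-1)}$.

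\textbf{Main obstacle.} The delicate step is the deformation from a generic current to a maximally transverse one with controlled mass cost. One needs a homotopy whose own mass is controlled by $\mass(T)$, that moves the horizontal part of the tangent space into $S$ without enlarging the non-horizontal components uncontrollably, and that is compatible with the cellular structure coming from $\Gamma$. The subtle interplay between $\Omega$-isotropy (preventing generation of vertical mass during the homotopy) and $\Omega$-regularity (providing the flexibility to absorb the vertical error terms by horizontal adjustments) is exactly what makes the Jacobian of $s_{1/\lambda}$ attain the optimal exponent $D-j-1$, and this is the technical heart of the argument.
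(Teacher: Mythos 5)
Your upper-bound strategy has a fundamental obstruction that makes it fail as stated, quite apart from the deformation you flag as ``the main obstacle''. You propose to rescale by $s_{1/\lambda}$, pass to the asymptotic cone $(G,\opna d_c)$, and fill there by an integral $(n-j)$-current $U$ that you ``again arrange to be maximally transverse''. But in a Carnot group, Lipschitz chains and integral currents are automatically horizontal almost everywhere (Pansu--Rademacher differentiability), and in fact $(G,\opna d_c)$ is purely $\mc H^{m}$-unrectifiable once $m$ exceeds the maximal dimension of $\Omega$-isotropic subspaces of $V_1$ (this is exactly what the paper invokes from Magnani in Lemma \ref{lemb)}). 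Since $n-j \ge n-k+1 > k_1+1$ in the range you consider, there simply are no nontrivial integral $(n-j)$-currents in the Carnot group, let alone ``maximally transverse'' ones: a maximally transverse current has a tangent distribution containing $V_2,\dots,V_d$, which is the opposite of horizontal, and such an object cannot live as an integral current of that metric space. So there is no filling $U$ of the kind you need, and the scale-back step has nothing to act on. Independently, the deformation of an arbitrary cycle to a maximally transverse one with mass controlled by $\mass(T)$ does not follow from $\Omega$-regularity and $\Omega$-isotropy: a genuinely horizontal cycle (e.g.\ one inside an integral submanifold produced by the local $h$-principle) cannot be pushed to a maximally transverse position at bounded mass cost. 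The paper avoids both problems by never deforming the cycle at all; instead it verifies the hypotheses of Young's high-dimensional filling theorem (Theorem \ref{ThmYoungII}), constructing an $s_2(\Gamma)$-adapted triangulation of $G\times[1,2]$ and an $s_2(\Gamma)$-equivariant, $k$-horizontal, piecewise smooth map $\psi$ with the required scaling compatibility, using the folded-approximation/$h$-principle machinery of Section \ref{S4} on the compact quotient $M_2 = G/s_2(\Gamma)$.

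Your lower bound is closer in spirit to the paper's (Proposition \ref{toplow} also uses a coordinate cube omitting $j$ horizontal directions), but the justification ``any filling of this boundary must contain the enclosed region'' is false as a statement about Lipschitz chains: a filling can wander wherever it likes. What is true, and what the paper uses via Burillo's Theorem \ref{Burillo}, is a Stokes-type argument: the closed left-invariant $(n-j)$-form $\gamma$ (the cube's volume form) scales as $s_t^*\gamma = t^{D-j}\gamma$, pairs nontrivially with the cube, and since $\gamma$ is closed and pointwise bounded, $\int_b \gamma$ over any filling $b$ of the boundary is a fixed nonzero scaling-homogeneous quantity, forcing $\mass(b) \succcurlyeq t^{D-j}$ while $\mass(\partial(\text{cube}))$ scales only like $t^{D-j-1}$. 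You would also need to verify that $\gamma$ is actually closed, which is where the $\Omega$-isotropy of $S$ enters; your write-up uses the isotropy/regularity heuristically but never at the place where the paper needs it.
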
 

Note that every lattice $\Gamma$ in a nilpotent Lie group $G$ is cocompact. So $G$ and $\Gamma$ are quasi-isometric. Therefore their asymptotic cones are isometric and have coinciding Hausdorff-dimensions.\\
Therefore the above result extends a classical result of Nicolas Varopoulos (see \cite{GromovR},\cite{Varopoulos}).

\begin{Th}[{compare  \cite{Varopoulos}}]
Let $\Gamma$ be a lattice in an $n$-dimensional nilpotent Lie group $G$. Further denote by $D$ the Hausdorff dimension of the asymptotic cone of $\Gamma$. Then holds:
$$\delta_\Gamma^{n-1}(l) \sim l^\frac{D}{D -1} \ .$$
\end{Th}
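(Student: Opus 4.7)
The plan is to reduce the statement to computing the top-dimensional filling function $F^n_G$ of the ambient Lie group $G$. Every lattice $\Gamma$ in a simply connected nilpotent Lie group $G$ is cocompact (Mal'cev), so $\Gamma$ is quasi-isometric to $G$, and the quasi-isometry invariance of filling functions (together with the standard equivalence $\delta^{n-1}\sim F^n$ relating the top-dimensional Dehn function of a cocompact lattice to the top-dimensional filling function of its ambient manifold) gives $\delta^{n-1}_\Gamma\sim F^n_G$. It therefore suffices to show $F^n_G(l)\sim l^{D/(D-1)}$.

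For the upper bound I would argue by scaling. Passing if necessary to the associated graded Carnot group $G_\infty$, which has the same dimension $n$ and the same Hausdorff exponent $D$ on its Carnot-Carath\'eodory metric, I would exploit that the scaling automorphism $s_t$ on $G_\infty$ multiplies the $n$-dimensional Riemannian volume by the Jacobian of $\hat s_t$ on $\mf g$, which equals $\prod_{j=1}^d t^{j\dim V_j}=t^D$. Given an $(n-1)$-cycle $a$ with $\mass(a)\le l$, I would shrink $a$ via $s_{1/t}$ until it lies in a fixed compact neighbourhood of the identity, fill the shrunken cycle by a bounded-mass $n$-chain (using that $G_\infty$ is $(n-1)$-connected and the fill region is compact), and push the filling back via $s_t$. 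Calibrating $t\sim l^{1/(D-1)}$ so that the shrunken $(n-1)$-mass is $O(1)$ yields a filling of $a$ of mass $\lesssim t^D\sim l^{D/(D-1)}$.

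For the lower bound I would exhibit extremal cycles as boundary spheres of large Riemannian balls. By the Bass-Guivarc'h-Mitchell volume growth, $\opna{vol}_n(B_r(\mf o))\sim r^D$ and $\opna{vol}_{n-1}(\partial B_r(\mf o))\sim r^{D-1}$ for large $r$ (the same exponent $D$ from Section \ref{SectionSNLG} governs asymptotic Riemannian ball volumes). The sphere $\partial B_r$ is an $(n-1)$-cycle of mass $l\sim r^{D-1}$; as $G$ is contractible (homeomorphic to $\RR^n$), any integral $n$-current $b$ in $G$ with $\partial b=[\partial B_r]$ must equal $[B_r]$, by the constancy theorem for top-dimensional currents (every nonzero integer-multiplicity $n$-cycle in the contractible manifold $G$ has infinite mass, so the difference $b-[B_r]$, being a cycle, must vanish). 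Hence $\mass(b)\ge\opna{vol}_n(B_r)\sim r^D\sim l^{D/(D-1)}$.

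The main obstacle will be the transfer step in the upper bound. The asymptotic Carnot group $(G_\infty,d_c)$ is \emph{not} quasi-isometric to the Riemannian Lie group $(G,d_g)$: their Hausdorff dimensions disagree ($D$ versus $n$), so Proposition \ref{QI} does not directly apply. Bridging the two requires a limit argument via Pansu's pointed Gromov-Hausdorff convergence $(G,\tfrac{1}{r}d_g,e)\to(G_\infty,d_c,e)$ combined with an Ambrosio-Kirchheim compactness theorem for integral currents (Section \ref{SectionFIFIC}), so that a bounded sequence of fillings in rescaled copies of $G$ converges to a filling in $G_\infty$; alternatively one may run the scaling argument directly on a scalable lattice in $G$ via a Federer-Fleming deformation. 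Both routes are standard in the Carnot group literature but require care.
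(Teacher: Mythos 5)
The paper does not prove this statement: it is a classical theorem of Varopoulos, recorded under the header ``compare \cite{Varopoulos}'' precisely because it is being \emph{cited}, not reproved. The role it plays in the paper is motivational --- the text immediately afterwards says that Theorem~\ref{Thm2} (for $j=0$, under the additional hypotheses of an $s_2$-scalable lattice and an $\Omega$-regular, $\Omega$-isotropic subspace) \emph{extends} it. Varopoulos' own argument is analytic: it runs through the equivalence between isoperimetric and Sobolev inequalities on groups of polynomial growth and heat-kernel decay controlled by the volume-growth exponent $D$, and does not resemble a scaling argument. So you are attempting a proof where the paper offers none, and your method is entirely different from the source you would be competing with.

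Your lower bound is essentially sound in spirit: $\opna{vol}_n(B_r)\sim r^D$ and $\opna{vol}_{n-1}(\partial B_r)\sim r^{D-1}$, and the constancy argument on the contractible manifold $G$ forces any filling of $\partial B_r$ with compact support to have mass at least $\opna{vol}_n(B_r)$. The upper bound, however, has a genuine quantitative gap. The dilation $s_{1/t}$ (for $t\ge1$) is only $\tfrac{1}{t}$-Lipschitz for the Riemannian metric; the scaling estimate in the paper,
$$\opna d_g(s_t(p),s_t(q))\le t\,\opna d_g(p,q)\quad(t\le1),$$
is the \emph{only} uniform control you have, and it gives
$\mass(s_{1/t}(a))\le t^{-(n-1)}\mass(a)$, not $t^{-(D-1)}\mass(a)$. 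Setting $t\sim l^{1/(D-1)}$ therefore yields a shrunken cycle of mass at most $l^{(D-n)/(D-1)}$, which \emph{diverges} for non-abelian $G$ (where $D>n$); the shrunken cycle is not of bounded mass, so the ``fill a bounded cycle, push forward by $s_t$'' step collapses. The uniform $t^{D-1}$ scaling for $(n-1)$-mass that your calibration tacitly assumes only holds for chains whose tangent planes are distributed in a particular way relative to the grading (as in the proof of Proposition~\ref{toplow}, where $b$ is chosen to be a coordinate cube), not for arbitrary Lipschitz $(n-1)$-cycles. Fixing this is not a matter of being careful with Gromov--Hausdorff limits and current compactness as you suggest in the last paragraph; it is a structural failure of the dilation approach for the Riemannian metric, and it is precisely why the paper's own upper bound (Proposition~\ref{topup}) goes through Young's triangulation theorem with horizontal approximations rather than a naive rescaling.
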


Varopoulos' result corresponds to the case $j=0$ in Theorem \ref{Thm2}.

Now we turn to the special case of simply connected $2$-step nilpotent Lie groups. As seen in Section \ref{SectionSNLG}, all simply connected $2$-step nilpotent Lie groups are stratified nilpotent Lie groups and so fit to our situation. We will see that in every such group there is a lattice $\Gamma$ which satisfies the condition $s_2(\Gamma)\subset \Gamma$. So this doesn't remain a restriction to the Lie group and we can drop this requirement. \\ This leads to the following version:

\begin{Thm}[{Filling functions of $2$-step nilpotent Lie groups}]\label{Thm3}
Let $G$ be an $n$-dimensional simply connected $2$-step nilpotent Lie group equipped with a left-invariant Riemannian metric. Further let $\mf g$ be the Lie algebra of $G$ with grading $\mf g= V_1 \oplus V_2$. Let $n_2=\dim V_2$ and let $k \in \mathbb N$. If there exists a   $(k+1)$-dimensional $\Omega$-regular, $\Omega$-isotropic subspace $S\subset V_1$,  then holds:
\begin{enumerate}[(i)]
\item $F^{j+1}_G(l) \sim l^\frac{j+1}{j} \qquad$ for all $j \le k$,

\item $F^{k+2}_G(l) \preccurlyeq l^{\frac{k+3}{k+1}}$ \ ,

\item $F^{n-j}_G(l) \sim l^\frac{n+n_2-j}{n+n_2-j-1}\quad$ for all $j \le k-1$.

\end{enumerate}
\end{Thm}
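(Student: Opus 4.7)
The plan is to deduce Theorem \ref{Thm3} from Theorems \ref{Thm1} and \ref{Thm2} by producing, in every simply connected $2$-step nilpotent Lie group $G$, a lattice $\Gamma$ with $s_2(\Gamma)\subset\Gamma$. Once such a $\Gamma$ is in hand, parts (i) and (ii) follow from Theorem \ref{Thm1}: the exponent $(k+1+d)/(k+1)$ in the upper bound collapses to $(k+3)/(k+1)$ because $d=2$. Part (iii) follows from Theorem \ref{Thm2} as soon as the Hausdorff dimension of the asymptotic cone is computed as $D=\dim V_1+2\dim V_2=n+n_2$, matching the stated exponent $(n+n_2-j)/(n+n_2-j-1)$.

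The construction of $\Gamma$ exploits a feature special to the $2$-step setting: the Lie bracket is encoded by a single surjective linear map $\phi:\Lambda^2 V_1\to V_2$, and the Jacobi identity is automatic, so no coupling constraints arise. Consequently one can always select bases $\{e_1,\ldots,e_{n_1}\}$ of $V_1$ and $\{f_1,\ldots,f_{n_2}\}$ of $V_2$ in which the structure constants $c_{ij}^k$ defined by $[e_i,e_j]=\sum_k c_{ij}^k f_k$ are all integers. This rational realisability is precisely what makes the lattice hypothesis of Theorems \ref{Thm1} and \ref{Thm2} redundant in the $2$-step case; it is not available for higher-step Lie algebras in general.

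With such bases fixed, the Baker--Campbell--Hausdorff formula simplifies to $\exp(X)\exp(Y)=\exp\bigl(X+Y+\tfrac12[X,Y]\bigr)$ in the $2$-step case. One then sets
$$\Gamma\defeq\Bigl\{\exp\Bigl({\textstyle\sum_i a_i e_i+\sum_j b_j f_j}\Bigr)\ :\ a_i\in\ZZ,\ b_j\in\tfrac12\ZZ\Bigr\}.$$
Closure under products and inverses is a short check: if $X,Y$ have the stated form, then $\tfrac12[X,Y]$ lies in the $\tfrac12\ZZ$-span of the $f_j$ because the $c_{ij}^k$ are integers and the BCH correction supplies exactly the factor $\tfrac12$. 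Discreteness is immediate from the exponential-coordinate description, and cocompactness follows from Mal'cev's theorem. Because $\hat s_2$ acts by multiplication by $2$ on $V_1$ and by $4$ on $V_2$, and since $2\ZZ\subset\tfrac12\ZZ$, one obtains $s_2(\Gamma)\subset\Gamma$.

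The main obstacle in this outline is the rational-realisability step for the bracket; the BCH bookkeeping and the invocation of the preceding theorems are then routine. The rest of the proof is a verbatim appeal to Theorems \ref{Thm1} and \ref{Thm2}.
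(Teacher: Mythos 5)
Your approach coincides with the paper's: you reduce Theorem~\ref{Thm3} to Theorems~\ref{Thm1} and~\ref{Thm2} by producing a lattice $\Gamma$ with $s_2(\Gamma)\subset\Gamma$, and the exponent bookkeeping ($d=2$ gives $(k+3)/(k+1)$; $D=n_1+2n_2=n+n_2$) as well as the BCH verification that your candidate $\Gamma$ is a scalable lattice are correct. The paper's Lemma~\ref{lattice} does the same thing, with the half-integers absorbed into a choice of $V_2$-basis instead of left explicit in the exponential coordinates, so the content is the same.

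Both your proposal and the paper's Lemma~\ref{lattice} hinge on a claim that is false in general: that every simply connected $2$-step nilpotent Lie group admits a basis of its Lie algebra with rational (equivalently integer, after clearing denominators) structure constants. By Mal'cev's criterion this is equivalent to $G$ admitting a lattice, and it is classical that some $2$-step nilpotent Lie groups admit no lattice at all. For instance, with $n_1=8$, $n_2=3$ the Grassmannian of codimension-$3$ kernels $W\subset\Lambda^2 V_1\cong\RR^{28}$ has dimension $3\cdot 25=75>64=\dim GL(V_1)$, so there is a continuum of nonisomorphic brackets, only countably many of which can carry a $\QQ$-structure; and these dimensions satisfy Gromov's inequality $n_1-2\ge 2n_2$, so generically an $\Omega$-regular, $\Omega$-isotropic plane exists and the hypothesis of the theorem is not vacuous. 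Your heuristic ``the Jacobi identity is automatic, so no coupling constraints arise'' actually cuts the other way: fewer constraints means a larger moduli space of brackets, hence rational realisability becomes \emph{rarer}, not automatic. The paper's Lemma~\ref{lattice} carries the identical gap, in the assertion that the $\ZZ$-span of $B_1\cup B_2$ is ``by construction'' closed under the bracket: a bracket $[e_i,e_j]$ whose half was discarded when selecting the $V_2$-basis $B_2$ need not be a $\ZZ$-combination of the chosen vectors. If one adds the hypothesis that $\mathfrak g$ admits a rational structure, your construction is correct and a clean repackaging of the paper's argument; without it, both proofs (and the statement of Lemma~\ref{lattice}) are too strong.
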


The above theorems state a super-Euclidean upper bound for the filling function in the first dimension above the dimension of the $\Omega$-regular, $\Omega$-isotropic subspace. In some cases we can get in this dimension a super-Euclidean lower bound, too.

\begin{Thm}[{A super-Euclidean filling function}]\label{Thm4}
Let $G$ be a stratified nilpotent Lie group equipped with a left-invariant Riemannian metric. Further let $\mf g$ be the Lie algebra of $G$  with grading $\mf g=V_1\oplus ... \oplus V_d$. Let $k_0,k_1\in \NN$, such that $(k_0+1)$ is the maximal dimension of an $\Omega$-regular, $\Omega$-isotropic subspace of $V_1$ and $(k_1+1)$ is the maximal dimension of an  $\Omega$-isotropic subspace of $V_1$. Further let one of the following two conditions be satisfied:
\begin{enumerate}[a)]
\item There is an $k_0 \le k \le k_1$ such that there is an integral current $T\in \boldsymbol I^{cpt}_{k+1}(G,\opna d_c)$ with $\partial T=0$ and $T\ne 0$ but no integral current $S \in \boldsymbol I^{cpt}_{k+2}(G,\opna d_c)$ with $\partial S=T$.
\item The two numbers $k_0$ and $k_1$ coincide: $\quad k_0=k_1 \eqdef k.$
\end{enumerate}
Then holds:
$$F^{k+2}_G(l) \succnsim l^\frac{k+2}{k+1} \ .$$
\end{Thm}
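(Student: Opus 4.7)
The plan is to argue by contradiction, using the scaling automorphisms $s_t$ to transport the non-fillability of a cycle in the asymptotic cone $(G,\opna d_c)$ down to a super-Euclidean lower bound for $F^{k+2}_G$ in the Riemannian group. The core idea is that if $F^{k+2}_G$ grew only Euclideanly in this dimension, then the rescaled fillings, shrunk by $s_{1/R}$, would converge to an integral filling inside the Carnot space, contradicting the hypothesis.

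First I would reduce hypothesis (b) to hypothesis (a). Under $k_0=k_1=k$, no $(k+2)$-dimensional subspace of $V_1$ is $\Omega$-isotropic; by Pansu's differentiation theorem the Pansu differential of any Lipschitz map from a domain in $\RR^{k+2}$ into $(G,\opna d_c)$ is a graded homomorphism whose image lies in an abelian subspace of $V_1$, hence has rank at most $k_1+1=k+1$ almost everywhere. Therefore every compactly supported integral $(k+2)$-current in $(G,\opna d_c)$ has zero mass, so no non-zero $(k+1)$-integral cycle is a Carnot boundary. The $(k+1)$-dimensional $\Omega$-regular, $\Omega$-isotropic subspace $S\subset V_1$ provides compactly supported horizontal Lipschitz $(k+1)$-discs on translates of $\exp(S)$, from which I can construct by gluing along a common Lipschitz boundary a non-zero $T\in \boldsymbol I^{cpt}_{k+1}(G,\opna d_c)$ witnessing (a).

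Now assume for contradiction that $F^{k+2}_G(l)\preccurlyeq l^{(k+2)/(k+1)}$. Starting from $T$ as in (a), I approximate it by a horizontal Lipschitz $(k+1)$-cycle $a$ of compact support, for which $\mass_g(a)=\mass_c(T)$ because $a$ is horizontal. For $R\ge 1$ set $a_R\defeq s_R\circ a$; by Corollary \ref{masslem}, $\mass_g(a_R)=R^{k+1}\mass_g(a)$. The assumed filling bound produces a Lipschitz $(k+2)$-chain $b_R$ with $\partial b_R=a_R$ and $\mass_g(b_R)\le C\cdot R^{k+2}$, and a standard cone-truncation allows me to take its support inside a $d_g$-ball of radius $O(R)$. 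Setting $c_R\defeq s_{1/R}(b_R)$ yields $\partial c_R=a$; since $s_{1/R}$ is $(1/R)$-Lipschitz on $(G,\opna d_g)$ for $R\ge 1$, $\mass_g(c_R)\le C$ uniformly in $R$, and the contraction of the higher layers $V_j$ by $R^{-j}$ concentrates the tangent planes of $c_R$ along the horizontal distribution as $R\to\infty$. In the rescaled metric $R^{-1}\opna d_g$, which converges pointed Gromov-Hausdorff to $(G,\opna d_c)$ by Pansu's theorem, the pair $(a_R,b_R)$ therefore becomes a family of integral currents of uniformly bounded mass and support. Applying Wenger's compactness and closure theorems for integral currents along such Gromov-Hausdorff limits, I extract a subsequential limit $(a_\infty,c_\infty)$ of integral currents in $(G,\opna d_c)$ with $\partial c_\infty=a_\infty$; the normalisation ensures that $a_\infty$ represents $T$, so $c_\infty$ is an integral $(k+2)$-filling of $T$ in $(G,\opna d_c)$, contradicting (a).

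The main obstacle is the final limiting step: one must verify that the limit current $c_\infty$ is genuinely a $(k+2)$-integral current with respect to $\opna d_c$ and not merely a Riemannian limit. This requires matching the uniform support bound for $b_R$ with the scaling $s_{1/R}$ so that the rescaled family stays in a fixed ball of the Carnot limit, and it requires the horizontalisation effect of $s_{1/R}$ to be strong enough for the Carnot mass of $c_\infty$ to remain finite. The other delicate point is the reduction of (b) to (a), where the gluing construction producing a non-zero compactly supported $(k+1)$-cycle in $(G,\opna d_c)$ from the hypothesis $k_0=k_1=k$ must be carried out carefully to ensure the resulting current is indeed rectifiable and non-trivial.
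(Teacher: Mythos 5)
Your proposal re-derives, by hand, essentially the content of the two external results the paper uses as black boxes. The paper's proof of Theorem \ref{Thm4} is modular: it observes that condition a) is just the statement that $(G,\opna d_c)$ fails \emph{every} isoperimetric inequality for $\boldsymbol I^{cpt}_{k+1}(G,\opna d_c)$, applies the contrapositive of Wenger's Proposition \ref{II} to conclude that $(G,\opna d_g)$ fails the Euclidean isoperimetric inequality for $\boldsymbol I^{cpt}_{k+1}(G)$, and then applies the contrapositive of Proposition \ref{FI} to conclude $F^{k+2}_G \not\preccurlyeq l^{(k+2)/(k+1)}$. For the reduction $b)\Rightarrow a)$, the paper builds a horizontal $(k+1)$-cycle via the $h$-principle and then cites Magnani's $\mc H^{k+2}$-pure-unrectifiability theorem to rule out any compactly supported integral $(k+2)$-current in $(G,\opna d_c)$. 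Your scaling-and-limit argument is precisely the engine behind Wenger's Proposition \ref{II}, and your Pansu-differentiability argument for the absence of $(k+2)$-currents is the heart of Magnani's theorem. So the route is different in presentation but not in underlying analytic content; what the paper buys is that the hard estimates are outsourced.

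Where your sketch has genuine gaps is exactly at the points you flag as delicate, and they are more than cosmetic. First, you replace the integral current $T$ by a Lipschitz cycle $a$ and then claim the limit $a_\infty$ ``represents $T$''; this does not follow. If $T$ is not itself of the form $a_\#$ (which it need not be under hypothesis a)), you have only obtained a Carnot filling of $a_\#$, and you must separately control the error $T - a_\#$ (the Federer--Fleming deformation lemma used in Proposition \ref{FI} supplies an auxiliary current $S'$ of small mass with $T-\partial S'$ a Lipschitz chain, and the filling of $T$ is then $b+S'$; your argument does not track $S'$). Second, the claim that ``a standard cone-truncation allows me to take its support inside a $d_g$-ball of radius $O(R)$'' is not standard and is itself one of the technical contributions of Wenger's proof; without a quantitative support bound for the fillings $b_R$, the compactness theorem for integral currents does not apply. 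Third, the uniform bound $\mass_g(c_R)\le C$ on \emph{Riemannian} mass does not by itself bound the mass of $c_R$ as a current in the Gromov--Hausdorff limit; the correct statement is that the $R^{-1}\opna d_g$-mass of $b_R$ is bounded, which requires working carefully with masses in the sequence of rescaled spaces and invoking the Ambrosio--Kirchheim closure and compactness theorems as Wenger does in \cite{Wenger11}. All three points are resolved in the literature and your outline is recoverable, but as written it re-proves Proposition \ref{II} without supplying those proofs; the paper's version is complete precisely because it cites them.
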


We will see in the proof, that condition \textit{b)} implies condition \textit{a)}. Nevertheless we allow condition \textit{b)} to stand in the theorem, as it is easier to check directly for some specific Lie groups.

A stratified nilpotent Lie group which fulfils the conditions of Theorem \ref{Thm4} has at least one super-Euclidean filling function.
This wouldn't be possible, if the group would be a space of non-positive curvature. Of course a stratified nilpotent Lie group can't be a space of strictly positive curvature, as it is diffeomorphic to some $\RR^{N}$ and therefore not bounded. So the above theorem recovers for (some) stratified nilpotent Lie groups the result of Wolf \cite{Wolf} about the appearance of all different types of sectional curvature.
We will see, that the octonionic Heisenberg Groups $H^n_\OO$ are such groups. The Heisenberg Groups over $\CC$ and $\HH$ satisfy the conditions of Theorem \ref{Thm4}, too, but the super-Euclidean growth-type of their filling functions in dimension $n+1$ is already known exactly (see \cite{Young1} and \cite{Gruber2}).

%
%-----------------------------------------------------------------------------------------------------------------------
%

\subsection{Tools for the proofs}\label{S3}
\vspace*{-5mm}
For the proofs of the bounds on the filling functions we will use the following theorems. The first of them, due to Burillo, will be crucial to establish lower bounds on the filling functions.

\begin{thm}[{see \cite[Prop. 1.3] {Burillo}}]\label{Burillo} 
Let $G$ be a stratified nilpotent Lie group equipped with a left-invariant Riemannian metric and let $m \in \NN$. If there exists a Lipschitz $m$-chain $b$ and a closed $G$-invariant $m$-form $\gamma$ in G and constants $C,r,s >0$ such that \vspace{-3mm}
\begin{enumerate}[1)]
\item $\mass(s_t(\partial b))\le Ct^r$\ ,			\vspace{-3mm}
\item $\int_b \gamma >0$\ ,					\vspace{-3mm}
\item $s_t^*\gamma = t^s\gamma$\ ,			\vspace{-3mm}
\end{enumerate}
then holds \ $F^m_G(l) \succcurlyeq l^\frac{s}{r}$.
\end{thm}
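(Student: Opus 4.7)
The plan is to exploit the form $\gamma$ as a ``calibration-like'' object: integrating $\gamma$ over a Lipschitz chain is bounded above by a multiple of its mass (by the $G$-invariance, hence boundedness, of $\gamma$), and for a family of cycles obtained by scaling $\partial b$ this integral grows faster than the mass itself, forcing fillings to be large. Concretely, for each $t>0$ set $a_t\defeq s_t(\partial b)$ and $b_t\defeq s_t(b)$, so that $\partial b_t = a_t$. By hypothesis (1), $\mass(a_t)\le Ct^r$, which will give the parameter $l\defeq Ct^r$ used to plug into the filling function.

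Next I would compute the period of $\gamma$ against any filling of $a_t$. For any Lipschitz $m$-chain $c$ with $\partial c=a_t$ the chain $c-b_t$ is a Lipschitz $m$-cycle in $G$; since $G$ is diffeomorphic to some $\RR^N$, it is contractible, so $c-b_t$ bounds a Lipschitz $(m+1)$-chain. Because $\gamma$ is closed, Stokes' theorem (valid for Lipschitz chains via Rademacher differentiability, which the paper already invokes for the definition of mass) yields
\begin{equation*}
\int_c \gamma = \int_{b_t}\gamma = \int_b s_t^*\gamma = t^s \int_b \gamma,
\end{equation*}
where the second equality is the change-of-variables formula for the Lipschitz map $s_t$ and the third uses hypothesis (3). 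By hypothesis (2), the right hand side is a strictly positive multiple of $t^s$, call it $\kappa\cdot t^s$.

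Now I would bound $\int_c\gamma$ from above by the mass of $c$. Since $\gamma$ is left-invariant and $G$ acts by isometries on $(G,g)$, the comass $\|\gamma\|_\infty\defeq \sup\{\gamma_p(v_1,\dots,v_m): p\in G,\ |v_i|_g\le 1\}$ is finite and equal to its value at the identity. Hence for any Lipschitz $m$-chain $c$,
\begin{equation*}
\Bigl|\int_c \gamma\Bigr| \le \|\gamma\|_\infty\cdot \mass(c).
\end{equation*}
Combining with the previous identity, every filling $c$ of $a_t$ satisfies $\mass(c)\ge \kappa\|\gamma\|_\infty^{-1}\cdot t^s$. Taking the infimum over fillings and then the supremum over $m$-cycles of mass at most $l$, and parametrising $l = Ct^r$, i.e.\ $t=(l/C)^{1/r}$, yields
\begin{equation*}
F^m_G(l)\ge \inf_{c:\,\partial c=a_t}\mass(c)\ \ge\ \frac{\kappa}{\|\gamma\|_\infty}\Bigl(\frac{l}{C}\Bigr)^{s/r},
\end{equation*}
which is the desired bound $F^m_G(l)\succcurlyeq l^{s/r}$.

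The only step with any real content is the passage from Stokes' theorem on smooth chains to Lipschitz chains, together with the identification $\int_c\gamma=\int_{b_t}\gamma$ via contractibility of $G$; both are standard but need to be cited carefully (for Lipschitz chains one uses Rademacher and a smoothing argument, or works in the category of integral currents as in Section~\ref{SectionFIFIC} and then transfers the bound). Once that is in place, the scaling behaviour (1) and (3) are exactly engineered so that the mass of $a_t$ grows as $t^r$ while the period of $\gamma$ grows as $t^s$, and the ratio $s/r$ is forced onto the filling function. I expect the verification that $\gamma$ has finite comass (which is immediate from left-invariance) and that $s_t^*$ acts on volumes of horizontal or general chains compatibly with the change of variables formula to be the only subtle bookkeeping in a rigorous write-up.
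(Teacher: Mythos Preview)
The paper does not supply its own proof of this theorem; it is quoted as a tool from Burillo \cite[Prop.~1.3]{Burillo}. Your argument is correct and is precisely the calibration argument behind Burillo's original proposition: scale the given chain to produce a family of $(m-1)$-cycles $a_t=s_t(\partial b)$ whose mass grows like $t^r$, observe that the period $\int_c\gamma$ is independent of the choice of filling $c$ (closedness of $\gamma$ plus contractibility of $G$), compute this period on the canonical filling $s_t(b)$ via hypothesis~(3) to get growth $t^s$, and bound any filling's mass from below using the finite comass of the left-invariant form. Your bookkeeping is accurate and the technical caveats you flag (Stokes for Lipschitz chains, finiteness of comass from left-invariance) are the right ones; nothing further is needed.
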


The following two theorems, both due to Young, are essential to establish upper bounds on the filling functions. For these theorems we need the notion of horizontal maps introduced in Section \ref{SectionSNLG}.
\begin{thm}[{see \cite[Thm. 3]{Young1}}]\label{ThmYoung}
Let $G$ be a stratified nilpotent Lie group equipped with a left-invariant Riemannian metric, let $(\tau,f)$ be a triangulation of $G$ and let $\phi:\tau \to G$ be a $m$-horizontal map in bounded distance to $f$. Further let $(\eta,h)$ be a triangulation of $G\times [1,2]$ which restricts on $G\times\{1\}$ to $(\tau,f)$ and on $G\times \{2\}$ to $(\tau, s_2\circ f)$ and let $\psi:\eta \to G$ be an $m$-horizontal map which extends $\phi$ and $s_2\circ \phi$ (i.e. $\psi^{(m)}_{\mid h^{-1}(G\times \{1\})}=\phi^{(m)}$ and $\psi^{(m)}_{\mid h^{-1}(G\times \{2\})}=s_2\circ \phi^{(m)}$). Then holds:
$$F^{j+1}_G(l)\preccurlyeq l^\frac{j+1}{j} \quad \text{for all } j \le m-1.$$
\end{thm}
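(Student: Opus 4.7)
The plan is to use the horizontal map $\psi$ as an interpolating device and iterate it via the scaling automorphisms $s_{2^k}$. Concretely, for any simplicial $j$-chain $c$ supported on the $j$-skeleton of $\tau$, the chain $\psi\circ(c\times[1,2])$ is a horizontal $(j+1)$-chain in $G$ whose boundary is (up to sign) $s_2\phi\circ c-\phi\circ c$. Combining this with the scaling formula $\mass(s_t(a))=t^{j+1}\mass(a)$ for horizontal $(j+1)$-chains $a$ (Corollary \ref{masslem}) produces a telescoping sum whose mass grows geometrically with ratio $2^{j+1}$; matching the number of iterations to the size of the input cycle will yield precisely the bound $l^{(j+1)/j}$.

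Given a Lipschitz $j$-cycle $\alpha$ with $\mass(\alpha)\le l$ and $j\le m-1$, the first step would be to pick $N\in\mathbb N$ with $2^{Nj}$ of order $l$ and scale down: by the general Lipschitz estimate $\opna d_g(s_{2^{-N}}(p),s_{2^{-N}}(q))\le 2^{-N}\opna d_g(p,q)$, the mass of $s_{2^{-N}}(\alpha)$ is of order $1$. The second step is to replace $s_{2^{-N}}(\alpha)$ by a horizontal cycle of comparable mass via horizontal approximation, absorbing the transition into a horizontal bounded-mass $(j+1)$-chain $F$. The third step is to approximate simplicially along $f:\tau\to G$ by a combinatorial cycle $c$ on the $j$-skeleton of $\tau$, again of bounded mass, and to push via the $m$-horizontal map $\phi$ to obtain a horizontal $j$-cycle $\phi\circ c$ in $G$.

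Now I would build the filling inductively. Set $B_0:=\psi\circ(c\times[1,2])$; the hypotheses on $\psi$ imply $\partial B_0=s_2\phi\circ c-\phi\circ c$, and since $j+1\le m$ the chain $B_0$ is horizontal of bounded mass. For $0\le k\le N-1$ put $B_k:=s_{2^k}(B_0)$, so that $\partial B_k=s_{2^{k+1}}\phi\circ c-s_{2^k}\phi\circ c$ and $\mass(B_k)=2^{k(j+1)}\mass(B_0)$ by Corollary \ref{masslem}. Telescoping,
$$\mass\Bigl(\sum_{k=0}^{N-1}B_k\Bigr)\preccurlyeq\sum_{k=0}^{N-1}2^{k(j+1)}\preccurlyeq 2^{N(j+1)}\preccurlyeq l^{(j+1)/j},$$
and this sum fills $s_{2^N}(\phi\circ c)-\phi\circ c$. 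Combining it with a bounded-mass filling of the small cycle $\phi\circ c$ (which exists by simple connectedness of $G$) and with $s_{2^N}(F)$, whose mass is $\preccurlyeq 2^{N(j+1)}\preccurlyeq l^{(j+1)/j}$ again by Corollary \ref{masslem}, yields a filling of $\alpha$ with total mass $\preccurlyeq l^{(j+1)/j}$.

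The main obstacle is the horizontal approximation step. For the final scaling argument to close, the chain $F$ bridging $s_{2^{-N}}(\alpha)$ and $\phi\circ c$ must itself be horizontal, so that its mass scales as $2^{N(j+1)}$ under $s_{2^N}$; a merely Lipschitz filling would blow up uncontrollably, since the equality in the scaling estimate of $\opna d_g$ holds only along piecewise horizontal paths. This horizontal approximation has to be built inductively over the skeleta of a fine subdivision of $\tau$, using $\phi$ to horizontalise already-approximated lower skeleta and $\psi$ to interpolate on the cylinder over each new skeleton; the dimension constraint $j\le m-1$ is precisely what leaves the required extra horizontal dimension in which to carry out the fill.
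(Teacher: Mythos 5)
The theorem you are proving is stated in the paper only as a cited tool (from Young, \cite[Thm. 3]{Young1}); the paper itself gives no proof. So I am comparing your argument to Young's, which is the relevant benchmark.

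Your telescoping construction (the chains $B_k = s_{2^k}(B_0)$ with $B_0 = \psi\circ(c\times[1,2])$) is correct and is indeed the central mechanism of Young's proof: the boundary computation $\partial B_k = s_{2^{k+1}}\phi\circ c - s_{2^k}\phi\circ c$ is right, Corollary \ref{masslem} gives $\mass(B_k) = 2^{k(j+1)}\mass(B_0)$ because $B_k$ is a horizontal $(j+1)$-chain and $j+1\le m$, and the geometric sum lands on $l^{(j+1)/j}$ once $2^{Nj}\sim l$.

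The gap is in your scale-down/scale-up strategy, specifically the bridge $F$. You set $\alpha' := s_{2^{-N}}(\alpha)$ and want a horizontal bounded-mass $(j+1)$-chain $F$ with $\partial F = \alpha' - \phi\circ c$. But a horizontal chain has horizontal boundary: if every tangent plane of $F$ lies in $\mathcal H$, the same is true for the restriction of $F$ to $\partial\Delta^{j+1}$. Since $\alpha'$ is a generic Lipschitz $j$-cycle with no horizontality whatsoever, such an $F$ cannot exist. You correctly diagnose that a non-horizontal $F$ is fatal (after $s_{2^N}$ its mass picks up factors up to $2^{Nd(j+1)}$ rather than $2^{N(j+1)}$), but the remedy you suggest --- building the approximation inductively over skeleta with $\phi$ and $\psi$ --- produces a horizontal approximation of $\alpha'$ as a \emph{cycle}, not a horizontal \emph{homotopy} from $\alpha'$ to that cycle. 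The obstruction sits precisely in the boundary condition and cannot be removed by the $h$-principle; Corollary \ref{LemGr2} gives horizontal maps close to a given one, but the transition chain between the original map and its horizontal approximation is not horizontal and its mass is not controlled under $s_{2^N}$.

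Young's proof avoids this by never scaling the cycle $\alpha$ at all. The Federer--Fleming deformation is applied to $\alpha$ directly, at the natural scale of the triangulation $(\tau,f)$: it produces a simplicial $j$-cycle $P(\alpha)$ in the $j$-skeleton of $\tau$ and a deformation chain $Q$ with $\partial Q = \alpha - f(P(\alpha))$ and $\mass(Q)\preccurlyeq \mass(\alpha)\preccurlyeq l$. No horizontality of $Q$ is needed, because $Q$ is never scaled; the estimate $\mass(Q)\preccurlyeq l\preccurlyeq l^{(j+1)/j}$ is already good enough. Only \emph{after} this reduction does one invoke $\phi$ and $\psi$: the bounded-mass chain from $f(P(\alpha))$ to $\phi(P(\alpha))$ (both simplicial, $\phi$ in bounded distance to $f$), and then the coarsening homotopies through the scaled triangulations $s_{2^k}(\tau)$, each provided by $s_{2^k}(\psi)$, each horizontal, each of mass $\preccurlyeq l\cdot 2^k$. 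Summing to $k=N$ gives $l\cdot 2^N \sim l^{(j+1)/j}$. So: keep your telescoping step, discard the initial rescaling of $\alpha$, and replace your Steps 1--3 by the Federer--Fleming deformation applied at unit scale.
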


\begin{thm}[{see \cite[Prop. 8]{YoungII}}]\label{ThmYoungII} 
Let $G$ be a stratified nilpotent Lie group equipped with a left-invariant Riemannian metric  and let $\Gamma\subset G$ be a lattice with $s_2(\Gamma)\subset \Gamma$ and let $(\tau, f)$ be a $\Gamma$-adapted  triangulation, i.e. $f$ is $\Gamma$-equivariant. Further let $(\widetilde \tau, \widetilde f)$ be a $s_2(\Gamma)$-adapted  triangulation of $G\times [1,2]$, such that for $i \in \{1,2\}$ the restriction to $G\times \{i\}$ coincides with the triangulation $(\tau, s_i\circ f)$. Denote by $D$ the Hausdorff-dimension of the asymptotic cone $(G,d_c)$ of $G$. If there is a $s_2(\Gamma)$-equivariant, $m$-horizontal, piecewise smooth map $\psi: G\times [1,2] \cong \widetilde \tau \to G$ with $\psi(g,2)=s_2(\psi(s_\frac{1}{2}(g),1))$, then holds:
$$F^{n-j}_G(l) \preccurlyeq l^\frac{ D-j}{D -j-1} \quad \text{for all } j\le m-1.$$
\end{thm}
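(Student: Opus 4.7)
The plan is to construct a filling of the $(n-j-1)$-cycle $a$ (with $\mass(a)\le l$ and $j\le m-1$) by assembling cylinders produced from the iterates of $\psi$ at dyadic scales, estimating the total mass via the scaling properties of horizontal chains and the asymptotic Carnot-Carath\'eodory geometry of $G$.

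First I would reduce to a simplicial cycle $a'$ in a rescaled triangulation $s_{r_0}(\tau)$, with $r_0$ chosen according to the size of $a$ (roughly $r_0\sim l^{1/(D-j-1)}$); this introduces at most a bounded-factor loss in mass and a negligible simplicial-approximation correction. Then define the chain $E_k \defeq s_{2^k*}\bigl(\psi_*(s_{1/2^k*}(a')\times[1,2])\bigr)$ for $k=0,\dots,N-1$, where $N\sim\log_2 r_0$ is chosen so that $s_{1/2^N}(a')$ sits in a fundamental domain of $\Gamma$. Using the intertwining $\psi(g,2)=s_2(\psi(s_{1/2}(g),1))$ together with the $s_2(\Gamma)$-equivariance of $\psi$, the boundary of the telescoping sum $\sum_{k=0}^{N-1}E_k$ equals, up to a simplicial correction at the base, the difference $a'-s_{2^N*}(\psi_*(s_{1/2^N*}(a')\times\{1\}))$. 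The remaining small cycle can then be filled by a bounded-mass chain inside a ball of bounded size, so that the total chain is a bona fide filling of $a$ after reassembly.

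For the mass estimate, the central tool is Corollary~\ref{masslem} combined with the $m$-horizontality of $\psi$: on the $m$-skeleton the cylinders produced by $\psi$ are horizontal, hence scale as powers of $t$ equal to their dimension, and by Lemma~\ref{1lip} their Riemannian and Carnot masses are comparable. Since the Hausdorff dimension of $(G,\opna d_c)$ is $D=\sum_i i\cdot\dim V_i$, the cylinder masses at scale $2^{-k}$ form a geometric series dominated by the top-scale term, yielding $\mass(\sum_k E_k)\preccurlyeq r_0^{D-j}$. Combining with the bounded cost of the base filling and optimising $r_0\sim l^{1/(D-j-1)}$ gives $F^{n-j}_G(l)\preccurlyeq l^{(D-j)/(D-j-1)}$.

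The technical heart of the argument is showing that the cylinders $E_k$ have their mass governed by the Carnot-scaling exponent $D-j$ rather than the naive Riemannian exponent $n-j$. This requires using the $m$-horizontality and the scaling equivariance in concert: the horizontal $m$-skeleton ensures the mass under $s_{2^k}$ inflates at the most favourable rate, while the intertwining relation guarantees that the contributions at different scales combine into a convergent geometric sum matched to the Hausdorff dimension $D$. Balancing these so that the final exponent comes out to the sub-Euclidean value $(D-j)/(D-j-1)$, rather than the Euclidean $(n-j)/(n-j-1)$, is the most delicate step; it is also where the asymptotic-cone description of $(G,\opna d_c)$ enters, since it justifies using the Hausdorff dimension $D$ in place of the topological dimension $n$ in the filling exponent.
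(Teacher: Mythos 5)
The paper does not prove this theorem; it is stated as a black box with the citation \cite[Prop.~8]{YoungII}, so there is no in-paper proof to compare against. Assessing your sketch on its own merits: the telescoping idea (cylinders $E_k$ built by conjugating $\psi$ by dyadic scalings, glued by the intertwining $\psi(g,2)=s_2(\psi(s_{1/2}(g),1))$) is the right skeleton, but the central mass estimate is not justified and, as written, actually fails.

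Two concrete problems. First, the claim that the $E_k$ masses ``form a geometric series dominated by the top-scale term'' does not follow from the stated ingredients. The $m$-horizontality of $\psi$ constrains $\psi$ only on the $m$-skeleton of the \emph{fixed} triangulation $\widetilde\tau$ of $G\times[1,2]$; for $k<N$ the chain $s_{1/2^k*}(a')\times[1,2]$ is at scale $r_0/2^k\ne 1$ and is not a subcomplex of $\widetilde\tau$, so $m$-horizontality gives you nothing about the tangent planes of $\psi_*(s_{1/2^k*}(a')\times[1,2])$. With bare Lipschitz bounds one gets $\mass(E_k)\preccurlyeq l\cdot (2^k)^{D-n+1}$: the upscaling of an $(n-j)$-chain by $s_{2^k}$ can inflate mass by $(2^k)^{D-j}$, the downscaling of the $(n-j-1)$-cycle only gains $(2^k)^{-(n-j-1)}$, and since $D>n$ the exponent $D-n+1$ is \emph{positive}, so the series grows rather than converges. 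This is precisely the point Young's argument has to overcome, by exploiting that the $(j+1)$-dimensional structure \emph{transverse} to the cells (which lies in the $m$-skeleton since $j+1\le m$) is horizontal, so that the relevant cell masses scale as $r^{D-j-1}$ and $r^{D-j}$ rather than the worst-case exponents; your sketch does not contain this idea. Second, Lemma~\ref{1lip} only says $d_g\le d_c$; it does not make Riemannian and Carnot masses comparable, and indeed for the non-horizontal high-dimensional chains here they are not comparable. A smaller issue: the Federer--Fleming approximation cost at scale $r_0\sim l^{1/(D-j-1)}$ is $\sim l\cdot r_0 = l^{(D-j)/(D-j-1)}$, i.e.\ it is the entire target bound and not ``negligible,'' so that phrase should be dropped even if the rest of the estimate were fixed.
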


To check that our conditions imply the conditions of these theorems, we will use the $h$-principle and microflexibility. Roughly speaking, the existence of the $\Omega$-regular, $\Omega$-isotropic subspace will give us small horizontal submanifolds, which we are able to agglutinate to the desired triangulation. In fact we use the following proposition due to Gromov:

\begin{prop}[{see \cite[4.4 Corollary]{Gromov}} or Corollary \ref{LemGr2}]\label{ThmGromov}
Let $G$ be a stratified nilpotent Lie group with Lie algebra $\mf g$ and $m \in \mathbb N$. Further let $S$ be a $m$-dimensional $\Omega$-isotropic, $\Omega$-regular horizontal subspace of $\mf g$. Then every continuous map $f_0: T \to G$ from an $m$-dimensional simplicial complex $T$ into the stratified nilpotent Lie group $G$ can be approximated by continuous, piecewise smooth, piecewise horizontal maps $f:T \to G$.
\end{prop}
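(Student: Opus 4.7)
The plan is to realise this as an application of Gromov's $h$-principle for the partial differential relation that singles out horizontal maps. Concretely, consider the subset $\mc R\subset J^1(T,G)$ consisting of 1-jets whose linear part takes values in the horizontal distribution $\mc H$; a piecewise smooth map $f:T\to G$ is $\mc R$-holonomic precisely when it is piecewise horizontal. The statement then reads: every continuous $f_0$ can be $C^0$-approximated by $\mc R$-holonomic piecewise smooth maps.

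First I would produce a formal solution covering $f_0$, i.e.\ a bundle map $F:TT\to TG$ lifting $f_0$ and having image in $\mc H$. Since $S\subset V_1$ has dimension $m=\dim T$, left-translating $S$ along $f_0$ yields a horizontal subbundle of $f_0^*TG$ of rank $m$, and any local trivialisation of $TT$ over a simplex admits a (piecewise constant) isomorphism onto this subbundle. This gives the desired formal lift.

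Next I would verify that $\mc R$ is microflexible and, crucially, that it is \emph{ample} in a neighbourhood of the formal solution: given a piecewise horizontal map on the boundary of a small $m$-simplex one must be able to extend it horizontally into the interior. The $\Omega$-isotropy of $S$ removes any bracket obstruction (the differential of a horizontal form vanishes on $\Lambda^2 S$), while the $\Omega$-regularity, which says that $\Omega(\,\cdot\,,X):V_1\to\mf g/V_1$ is surjective when $X$ ranges over a basis of $S$, supplies enough freedom to prescribe the required curvature of any horizontal filling. This is precisely the algebraic hypothesis needed in Gromov's framework (\cite[2.4.11 and 4.4]{Gromov}) to deduce ampleness from the existence of an $\Omega$-isotropic, $\Omega$-regular subspace of the right dimension. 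With microflexibility and ampleness in hand, Gromov's $h$-principle homotopes the formal solution to a genuine holonomic one, with $C^0$-control obtained simplex by simplex from the standard skeletal extension procedure.

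The main obstacle is the verification of ampleness near the formal solution. Once the problem is translated into linear algebra on the curvature form $\Omega$, the interplay of isotropy (vanishing of $\Omega$ on $\Lambda^2S$) and regularity (surjectivity of the map $\Omega_\bullet$ after restriction to $S$) is exactly what Gromov's theorem demands; but writing down an explicit local horizontal filling whose boundary matches a prescribed piecewise horizontal map on $\partial\Delta^m$, and then checking that it depends continuously on parameters in the sense needed for microflexibility, is the technical heart of the argument. Everything else — setting up $\mc R$, building the formal lift from $S$, and invoking the $h$-principle to conclude $C^0$-approximation — is essentially formal once this step is carried out.
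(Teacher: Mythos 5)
Your high-level framing (set up a first-order relation, produce a formal lift via left-translating $S$ along $f_0$, invoke Gromov's theory) tracks the paper's approach. But two things go wrong in the middle, and they are the technical heart of the argument.

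First, \emph{ampleness} is not the relevant property here, and the relation of horizontal maps into a Carnot group is not an ample open relation in the sense of convex integration. The paper (following Gromov \cite[4.2.A$'$, 4.4]{Gromov}) works instead with the framework of \emph{microflexibility plus local integrability} for the sheaf of horizontal, $\Omega$-regular immersions. Lemma~\ref{localhlem} asserts exactly that this sheaf is microflexible and satisfies the local $h$-principle (local integrability comes from infinitesimal invertibility of $f\mapsto\{f^*\eta_i\}$ on $\Omega$-regular horizontal immersions). The $\Omega$-isotropy and $\Omega$-regularity of $S$ enter through that route, not through any ampleness-of-slices condition. So the paragraph where you try to derive ``ampleness near the formal solution'' from the curvature form is pursuing the wrong mechanism.

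Second, your ampleness step tacitly asserts that ``given a piecewise horizontal map on the boundary of a small $m$-simplex one must be able to extend it horizontally into the interior.'' This is precisely what fails in general, because $m=\dim T$ is the top dimension and holonomic approximation is only guaranteed near positive-codimension skeleta. The paper's proof (Proposition~\ref{approx}) handles the $(m-1)$-skeleton by the Holonomic Approximation Theorem \cite[Thm.\ 13.4.1, 13.5.1]{hprinciple}, but for the top-dimensional simplices it does \emph{not} attempt to extend into the interior of $\Delta^m$. Instead it slices $\Delta^m$ into layers $\Delta(t)$, applies microflexibility repeatedly to patch solutions across finitely many overlapping collar neighbourhoods, and outputs a \emph{folded} immersion $f':T'\to G$ on a modified simplicial complex $T'$ homotopy equivalent to $T$; the final approximation is $f=f'\circ\varphi$ for a homotopy equivalence $\varphi:T\to T'$. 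Your proposal has no analogue of this folded/layered construction, and without it the top-dimensional step is not merely technical but genuinely blocked. To repair the argument you should replace the ampleness discussion with an appeal to microflexibility and local integrability of the sheaf (Lemma~\ref{localhlem}), and treat the top dimension by the layered microflexibility argument that produces a folded immersion, as in Proposition~\ref{approx}.
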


As Gromov only sketches the proof of this proposition, we give a detailed proof in Section \ref{S4} (see Corollary \ref{LemGr2}).

%
%------------------------------------------------------------------------------
%

\subsection{Proofs for Filling Functions}\label{S5}
\vspace*{-5mm}
In this chapter we give the proofs for the bounds on the filling functions of stratified nilpotent Lie groups.
We start with the proof of Theorem \ref{Thm1}.

\vspace*{-5mm}
\subsubsection{The proof of Theorem \ref{Thm1}}
\vspace*{-5mm}

To prove  Theorem \ref{Thm1} we split its statement into three parts: The upper bounds in the dimensions from dimension $2$ up to dimension $k+1$, the lower bounds in these dimensions and the upper bound in dimension $k+2$. We prove the first part in Proposition \ref{Propcon}, the second part in Proposition \ref{Propeta} and finally the third part in Proposition \ref{dimplus}.

\begin{prop}\label{Propcon}
Let $G$ be a stratified nilpotent Lie group equipped with aleft-invariant Riemannian metric. Let $\mf g$ be Lie algebra of $G$ and $V_1$ as first layer of the grading of the Lie algebra. Further let $k \in \mathbb N$ and let $d$ be the degree of nilpotency of $G$. If there exists a lattice $\Gamma$ with $s_2(\Gamma) \subset \Gamma$ and a $(k+1)$-dimensional $\Omega$-isotropic, $\Omega$-regular subspace $S\subset V_1$, then holds:
$$F^{j+1}_G(l) \preccurlyeq l^\frac{j+1}{j} \quad \text{for all } j \le k.$$
\end{prop}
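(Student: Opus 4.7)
The goal is to reduce Proposition \ref{Propcon} to Young's Theorem \ref{ThmYoung}, which asks for a $\Gamma$-adapted triangulation $(\tau, f)$ of $G$, a piecewise $(k+1)$-horizontal map $\phi:\tau \to G$ at bounded distance from $f$, and an extension of $\phi$ and $s_2\circ\phi$ to a $(k+1)$-horizontal map $\psi$ on a triangulation of the cylinder $G\times[1,2]$ restricting to $(\tau,f)$ on $G\times\{1\}$ and to $(\tau,s_2\circ f)$ on $G\times\{2\}$. Once $\phi$ and $\psi$ are produced, Theorem \ref{ThmYoung} applied with $m=k+1$ yields exactly the desired bound $F^{j+1}_G(l)\preccurlyeq l^{(j+1)/j}$ for $j\le m-1=k$. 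So the whole proof is the construction of $\phi$ and $\psi$ from the hypothesis that $V_1$ contains a $(k+1)$-dimensional $\Omega$-isotropic, $\Omega$-regular subspace $S$.

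First I would pick any $\Gamma$-equivariant smooth triangulation $(\tau,f)$ of $G$; such triangulations exist because $\Gamma$ is cocompact and acts freely. On the $(k+1)$-skeleton $\tau^{(k+1)}$, the map $f$ is continuous into $G$. Since $S\subset V_1$ is $(k+1)$-dimensional, $\Omega$-isotropic and $\Omega$-regular, Proposition \ref{ThmGromov} applies to $f|_{\tau^{(k+1)}}$ and produces a continuous, piecewise smooth, piecewise horizontal approximation $\phi: \tau^{(k+1)} \to G$. Extending $\phi$ in any continuous way to the higher-dimensional skeleta of $\tau$ (which is possible as $G$ is contractible) gives a globally continuous map at bounded distance from $f$ and $(k+1)$-horizontal by construction. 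To preserve the $\Gamma$-equivariance needed downstream one applies this horizontalisation on a fundamental domain and then spreads the result over $G$ via $\Gamma$; this works because horizontality is a left-invariant property by Lemma 2.2.1 in \cite{CDPT}.

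Next, I would build a $\Gamma$-equivariant triangulation $(\eta,h)$ of $G\times[1,2]$ whose restrictions to the two boundary copies of $G$ are $(\tau,f)$ and $(\tau,s_2\circ f)$; this is possible because $s_2(\Gamma)\subset\Gamma$ and one can interpolate between the two triangulations simplex-by-simplex on a fundamental domain. On the boundary of this cylinder triangulation we already have the $(k+1)$-horizontal map $\phi\sqcup(s_2\circ\phi)$ — horizontality of $s_2\circ\phi$ follows from the scaling-invariance of horizontality, again via Lemma 2.2.1 in \cite{CDPT}. Applying the relative version of Proposition \ref{ThmGromov} (the $h$-principle respects boundary conditions, since the $\Omega$-regularity of $S$ implies microflexibility of the horizontal jet relation — this is precisely what is spelled out in Section \ref{S4}) to the $(k+1)$-skeleton of $\eta$ with prescribed boundary values $\phi\sqcup(s_2\circ\phi)$ produces the desired $(k+1)$-horizontal extension $\psi$. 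Extending $\psi$ continuously to the higher-dimensional skeleta completes the construction.

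The main obstacle is the relative horizontalisation in the cylinder: one must approximate a continuous map of an $(k+1)$-complex by a piecewise horizontal map while keeping the restriction to a sub-complex (the two boundary copies of the $(k+1)$-skeleton of $\tau$) fixed. The absolute statement of Gromov's $h$-principle given in Proposition \ref{ThmGromov} does not suffice by itself; what is needed is microflexibility of the horizontal differential relation, which is guaranteed by the $\Omega$-regularity of $S$ and is the content of the more careful proof in Section \ref{S4}. Once this relative horizontalisation is available, together with the equivariance argument on the cylinder, the rest is plugging $(\phi,\psi)$ into Theorem \ref{ThmYoung} to conclude the proof.
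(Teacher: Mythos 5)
Your proposal follows essentially the same route as the paper: lift a triangulation of the compact quotient $G/\Gamma$ to a $\Gamma$-equivariant triangulation of $G$, approximate on the $(k+1)$-skeleton by a piecewise horizontal map via Gromov's Proposition \ref{ThmGromov}, then use a relative version of the same $h$-principle (whose validity rests on the microflexibility coming from $\Omega$-regularity, as established in Section \ref{S4}) to produce the horizontal extension $\psi$ on the cylinder $G\times[1,2]$ with the prescribed boundary data $\phi$ and $s_2\circ\phi$, and finally feed $(\phi,\psi)$ into Theorem \ref{ThmYoung}. You even identify correctly the key technical point the paper glosses over — that the cylinder step needs a relative, not absolute, form of the approximation theorem — so the argument matches the paper's in all essentials.
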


\begin{proof}
First we look at the quotient $M=G/\Gamma$. As $G$ is a simply connected nilpotent Lie group and as $\Gamma$ is a lattice in $G$, we know by \cite[Theorem 2.18]{Raghunathan} that $\Gamma$ is torsion free. So $M$ is a smooth manifold.\\
Let $(\tau_M,f_M)$ be a triangulation of $M$. This triangulation $(\tau_M,f_M)$ lifts to a $\Gamma$-invariant triangulation $(\tau,f)$ of $G$.\\
By \cite[Lemma 4.5]{Young1} we get an $s_2(\Gamma)$-invariant triangulation $(\eta,\tilde f)$ of $G\times[1,2]$ which restricts on $G\times\{1\}$ to $(\tau,f)$ and on $G\times \{2\}$ to $(\tau, s_2\circ f)$. Here, the $s_2(\Gamma)$-action on $G\times \{1,2\}$ is defined by $\varphi_{\gamma}(g,i)=(s_{\frac{1}{3-i}}(\gamma)g,i)$ for $(g,i)\in G \times \{1,2\}$ and $\gamma \in s_2(\Gamma)$.\\
Define the map $\psi_0: \eta \to G$ by $\psi_0=\opna{pr}_G \circ \tilde f$ where $\opna{pr}_G: G\times [1,2] \to G$ denotes the projection to the first factor. Let further $f^{(k+1)}:\tau^{(k+1)}\to G$ and $\psi_0^{(k+1)}:\eta^{(k+1)} \to G$ be the restrictions of $f$ and $\psi_0$ to the $(k+1)$-skeletons of $\tau$ and $\eta$.
\\
As there is a $(k+1)$-dimensional $\Omega$-isotropic, $\Omega$-regular subspace $S\subset V_1$, the group $G$ fulfils the conditions of Proposition \ref{ThmGromov}) for $m=k+1$. Therefore (mentioning the remark following Proposition \ref{ThmGromov}) we can approximate $f^{(k+1)}$ by a horizontal map 
$$\phi^{(k+1)}: \tau^{(k+1)} \to G \ .$$
Further, as $\psi_0$ extends $f$ and $s_2\circ f$, i.e. 
$${\psi_0}_{\mid \tilde f^{-1}(G\times \{1\})}=f \quad \text{ and }\quad {\psi_0}_{\mid \tilde f^{-1}(G\times \{2\})}=s_2\circ f$$
we can, using again Proposition \ref{ThmGromov}), approximate $\psi_0^{(k+1)}$ by a horizontal map 
$$\psi^{(k+1)}:\eta \to G$$
which extends $\phi^{(k+1)}$ and $s_2\circ \phi^{(k+1)}$, i.e. 
$$\psi^{(k+1)}_{\mid \tilde f^{-1}(G\times \{1\})}=\phi^{(k+1)}\quad \text{ and }\quad\psi^{(k+1)}_{\mid \tilde f^{-1}(G\times \{2\})}=s_2\circ \phi^{(k+1)} \ .$$
Now we extend $\phi^{(k+1)}$ and $\psi^{(k+1)}$ to the whole simplicial complexes such that $\psi:\eta \to G$ extends $\phi:\tau \to G$ and $s_2\circ \phi:\tau \to G$. We do this by filling successively the boundary of each $r$-simplex $\widetilde{\Delta^r}$ of $\eta$, $r \ge k+2$, by a Lipschitz map $\psi^{(r)}:\widetilde{\Delta^r} \to G$ with $\psi^{(r)}(\partial\widetilde{\Delta^r})=\psi^{(r-1)}(\partial\widetilde{\Delta^r})$. This can be done as $G$ is contractible.\\
So we have the triangulations $(\tau,f)$ and $(\eta,\tilde f)$ and the $(k+1)$-horizontal maps $\phi$ and $\psi$ in bounded distance to $f$ and $\tilde f$ as required in Theorem \ref{ThmYoung}. \\
So $G$ fulfils the conditions for Young's filling theorem and we get the bound
$F^{j+1}_G(l) \preccurlyeq l^\frac{j+1}{j} \text{ for all } j \le k$.
\end{proof}

To prove the remaining lower bounds in Theorem \ref{Thm1} we use  Theorem \ref{Burillo} of Burillo:

We have to construct a $(j+1)$-form $\gamma$ and a closed $(j+1)$-chain $b$ for the constants $r=j$ and $s=j+1$ for $1 \le j \le  k$. 
To do this we use again the results of the previous chapter, in particular the local integrability, i.e. the existence of germs of horizontal submanifolds.

\begin{prop}\label{Propeta}
Let $G$ be a stratified nilpotent Lie group equipped with a left-invariant Riemannian metric. Let $\mf g$ be the Lie algebra of $G$ and $V_1$ be the first layer of the grading of the Lie algebra and let $k\in \NN$. If there exists a $(k+1)$-dimensional $\Omega$-isotropic, $\Omega$-regular subspace $S\subset V_1$ then holds:
$$F^{j+1}_G(l) \succcurlyeq l^\frac{j+1}{j}\quad \text{for all } j \le k.$$
\end{prop}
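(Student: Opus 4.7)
The strategy is to apply Burillo's Theorem \ref{Burillo} directly for each $j \le k$, producing a closed left-invariant $(j+1)$-form $\gamma$ and a horizontal $(j+1)$-chain $b$ calibrated by $\gamma$, with scaling exponents $r = j$ and $s = j+1$. The central algebraic observation is that $\Omega$-isotropy of $S \subset V_1$ together with $[V_1, V_1] \subset V_2$ forces $[S, S] = 0$ via the identity $\omega_i(Y, Z) = \tfrac{1}{2} b^*_{n_1+i}([Y, Z])$, so $S$ is an abelian Lie subalgebra, and $\exp(S)$ is an abelian Lie subgroup of $G$ whose tangent spaces lie entirely in the horizontal distribution $\mc H$.

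Fix a basis $X_0, \ldots, X_k$ of $S$ and extend it to a basis of $\mf g$. Let $X_0^*, \ldots, X_k^* \in \mf g^*$ be the dual covectors (vanishing on any chosen complement of $S$), viewed as left-invariant $1$-forms on $G$. For $j \le k$, define
$$\gamma = X_0^* \wedge X_1^* \wedge \cdots \wedge X_j^*.$$
I would first verify closedness: the Maurer--Cartan formula quoted in Section \ref{SectionSNLG} gives $dX_i^*(Y,Z) = -\tfrac{1}{2} X_i^*([Y,Z])$, but because $[V_\alpha, V_\beta] \subset V_{\alpha+\beta}$ with $\alpha + \beta \ge 2$, the bracket $[Y,Z]$ has no $V_1$-component, and $X_i^*$ vanishes outside $V_1$. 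Hence $dX_i^* = 0$ and $d\gamma = 0$. The scaling is immediate: $s_t$ acts on $V_1$ by multiplication by $t$, so $s_t^* X_i^* = t\, X_i^*$ and therefore $s_t^* \gamma = t^{j+1}\gamma$, giving condition~(3) of Theorem \ref{Burillo} with $s = j+1$.

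For the chain, let $W = \opna{span}(X_0, \ldots, X_j) \subset S$ and define $b : [0,1]^{j+1} \to G$ by
$$b(t_0, \ldots, t_j) = \exp(t_0 X_0 + \cdots + t_j X_j),$$
(triangulated in the obvious way to become a Lipschitz $(j+1)$-chain). Because $W$ is abelian, Baker--Campbell--Hausdorff gives $\partial b / \partial t_i = X_i|_{b(t)}$ as left-invariant vector fields, so $b^*\gamma$ evaluates to $1$ at every point and $\int_b \gamma = 1 > 0$, giving condition~(2). Moreover, all tangent vectors of $b$ lie in $\mc H$, so $b$ (and hence $\partial b$) is horizontal; Corollary \ref{masslem} then yields $\mass(s_t(\partial b)) = t^j \mass(\partial b)$, which is condition~(1) with $r = j$.

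Theorem \ref{Burillo} now gives $F^{j+1}_G(l) \succcurlyeq l^{s/r} = l^{(j+1)/j}$ for every $j \le k$. There is no serious obstacle to the argument: the $\Omega$-isotropy of $S$ does the geometric work by delivering an abelian horizontal subalgebra on which calibration by wedges of the dual horizontal $1$-forms becomes automatic. The only point requiring a bit of care is to check that the three hypotheses of Burillo's theorem hold uniformly over the chosen chain — in particular that $b$, constructed via the exponential restricted to an abelian subalgebra, is genuinely a horizontal Lipschitz chain so that Corollary \ref{masslem} applies.
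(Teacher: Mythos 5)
Your proof is correct and follows the paper's overall framework (Burillo's Theorem \ref{Burillo} with the calibration form $\gamma = X_0^*\wedge\cdots\wedge X_j^*$ and scaling exponents $r=j$, $s=j+1$), but it constructs the chain $b$ by a genuinely more elementary route. The paper invokes the local $h$-principle (Lemma \ref{localhlem}) to obtain a germ of integral submanifold $M$ tangent to $S_j = \langle X_1,\dots,X_{j+1}\rangle$ and then takes an $\varepsilon$-ball $b = B_\varepsilon^M(\mathrm{id})$; this relies on the combined $\Omega$-regular, $\Omega$-isotropic hypothesis, since the local integrability in Lemma \ref{localhlem} is stated for $\Omega$-regular subspaces. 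You instead observe that $\Omega$-isotropy alone makes $S$ an abelian Lie subalgebra contained in $V_1$, so $\exp$ restricted to a cube in $W = \operatorname{span}(X_0,\dots,X_j)$ is an explicit integral submanifold: BCH degenerates to $\exp(v+sX_i)=\exp(v)\exp(sX_i)$, hence $\partial b/\partial t_i = dL_{b(t)}X_i \in \mathcal H$. This bypasses the $h$-principle machinery entirely and in fact demonstrates that $\Omega$-regularity plays no role in the lower bound — only $\Omega$-isotropy is used. The paper implicitly confirms this in the high-dimensional Proposition \ref{toplow}, which assumes only $\Omega$-isotropy and also builds the chain via $\exp$ of an explicit cube; your argument shows that the same simplification applies to Proposition \ref{Propeta}. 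The verification of the three Burillo conditions and the closedness of $\gamma$ via the Maurer--Cartan formula matches the paper's reasoning, just spelled out a bit more concretely.
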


\begin{proof} 
Let $1 \le j \le k$.
We will show that there exist $\gamma$ and $b$ which fulfil the conditions of Burillo's filling theorem  for $r=j$ and $s=j+1$ (Theorem \ref{Burillo}).\\
Let $X_1,...,X_{k+1}$ be a basis of $S$ and define $S_j=\langle X_1,...,X_{j+1}\rangle$.
By  Lemma \ref{localhlem}  there is an integral submanifold $M$ to $S_j\subset S$, i.e. $T_pM=dL_pS_j \quad \forall p \in M$, where $dL_g$ denotes the differential of the left-multiplication by $g\in G$. Let $\varepsilon >0$ and $b=B_\varepsilon^M(id)$ be the $\varepsilon$-ball in $M$.  Further let $\gamma=X_1^*\wedge X_2^*\wedge ... \wedge X_{j+1}^*$, where $X_i^*$ denotes the $G$-invariant dual-form to $X_i$ defined by $X_u^*(X_v)=\delta_{uv}$. Then $\gamma$ is a closed $G$-invariant $(j+1)$-form as all of the $X_i$ lie in $S_j \subset S\subset V_1$ which has trivial intersection with $[\mf g, \mf g]$. \\
It remains to check the conditions of Theorem \ref{Burillo}:\vspace*{-3mm}
\begin{enumerate}[]
\item \textit{1)} $\mass(s_t(\partial b))=\mass(\partial b)\cdot t^{j}$ \ as $\partial b \subset M$ is a horizontal $j$-cycle. \\ \hspace*{5mm}As constant we can choose $C=\mass(\partial b)$.																	\vspace*{-3mm}
\item \textit{2)} $\int_b \gamma >0$ \ as $\gamma$ is a multiple of the volume form of $M$. 						\vspace*{-3mm}
\item \textit{3)} $s_t^*\gamma = t^{j+1} \gamma$ \ as all $X_i$ lie in $V_1$ and $\gamma$ is a $(j+1)$-form. 		\vspace*{-3mm}
\end{enumerate}
So the conditions of Theorem \ref{Burillo} are fulfilled for $r=j$ and $s=j+1$. Therefore holds for all $j\le k$: \ $F^{j+1}_G(l) \succcurlyeq l^\frac{j+1}{j}$.
\end{proof}

It now remains to prove the upper bound in the dimension above the dimension of the $\Omega$-regular $\Omega$-isotropic subspace $S$:

\begin{prop}\label{dimplus}
Let $G$ be a stratified $d$-step nilpotent Lie group equipped with a left-invariant \mbox{Riemannian} metric. Let $\mf g$ be the Lie algebra of $G$ and $V_1$ be the first layer of the grading of the Lie algebra and let $k\in \mathbb N$. If there exists a lattice $\Gamma$ with $s_2(\Gamma) \subset \Gamma$ and a $(k+1)$-dimensional $\Omega$-isotropic, $\Omega$-regular subspace $S\subset V_1$, then holds:
$$F^{k+2}_G(l) \preccurlyeq l^\frac{k+1+d}{k+1} \ .$$
\end{prop}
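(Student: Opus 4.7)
The plan is to extend the argument of Proposition \ref{Propcon} by one dimension, reusing the triangulations $(\tau,f)$ and $(\eta,\tilde f)$ together with the $(k+1)$-horizontal maps $\phi,\psi$ constructed there. Theorem \ref{ThmYoung} applied with $m = k+1$ only yields Euclidean bounds up to $F^{k+1}$, because its control is limited to $j$-cells with $j\le k+1$. To bound $F^{k+2}$ one must additionally estimate the masses of the $(k+2)$-simplices of $\tau$ and $\eta$ under the scaling automorphisms $s_t$, since these simplices are the top-dimensional pieces assembling the filling $(k+2)$-chains.

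The central estimate is a mass bound for a single $(k+2)$-simplex $\sigma$ of $\tau$. Since $\phi|_{\partial\sigma}$ is $(k+1)$-horizontal by construction, the extension of $\phi$ to the interior of $\sigma$ (which exists by contractibility of $G$ and can be chosen $\Gamma$-equivariantly, so only finitely many extension types arise modulo $\Gamma$) may be realised as a cone over its horizontal boundary. The tangent plane to $\phi(\sigma)$ at each smooth point is then spanned by $k+1$ horizontal vectors inherited from $\partial\sigma$ together with one radial vector lying in $\mf g = V_1\oplus\cdots\oplus V_d$. Since $s_t$ stretches $V_1$ by $t$ and each $V_j$ by $t^j$, the Jacobian on such a plane is bounded by $t^{k+1}\cdot t^d = t^{k+1+d}$, and cocompactness of $\Gamma$ produces a uniform constant $C$ with
$$\mass\bigl(s_t\circ\phi(\sigma)\bigr) \;\le\; C\, t^{k+1+d} \qquad \forall\, t\ge 1,$$
and the analogous estimate holds for the $(k+2)$-cells of $\eta$.

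Given a Lipschitz $(k+1)$-cycle $a$ with $\mass(a)\le l$, I would then set $t = l^{1/(k+1)}$ and glue $J\sim\log_2 t$ copies of the $s_{2^j}$-equivariant triangulation $(\eta,\tilde f,\psi)$ together into a $(k+1)$-horizontal triangulation of $G\times[1,t]$. Following the iterated-escalator scheme underlying Theorem \ref{ThmYoung}, the cycle $a$ is pushed through scales and filled using the $(k+2)$-cells at the scales $2^0, 2^1, \ldots, 2^J$; at every intermediate stage the pushed cycle has bounded $(k+1)$-mass, so only boundedly many $(k+2)$-cells are required at each scale. Summing the geometric series
$$\sum_{j=0}^{J} C\,(2^j)^{k+1+d} \;\asymp\; t^{k+1+d} \;=\; l^{(k+1+d)/(k+1)}$$
then produces the claimed bound.

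The main obstacle is to guarantee that the cone tangent-plane structure survives throughout the escalator, so that the $t^{k+1+d}$ estimate applies uniformly across all scales and avoids cells with several transverse directions in deep layers (which would blow up the exponent). This requires making the cone extensions of $\phi$ to the $(k+2)$-cells in a fashion compatible with both the $\Gamma$-equivariance and the $s_2$-compatibility stitching the iterated copies of $\eta$ together; since there are only finitely many $\Gamma$-orbit types of simplices and the cone construction is purely local, this reduces to a combinatorial bookkeeping task rather than a genuinely geometric difficulty.
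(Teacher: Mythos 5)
Your plan follows the same route as the paper's proof, but as written it has one genuine gap and one needless detour.

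The gap is in the cone construction. You assert that the extension of $\phi$ to the interior of a $(k+2)$-simplex $\sigma$ ``may be realised as a cone over its horizontal boundary'' and that at every smooth point the tangent plane is spanned by $k+1$ horizontal vectors ``inherited from $\partial\sigma$'' together with one radial vector. This is \emph{not} a consequence of taking an arbitrary topological (or geodesic) cone: generically, pushing a boundary tangent vector into the interior along the coning homotopy destroys horizontality, and then the Jacobian estimate under $s_t$ would pick up extra factors of $t^d$ in several directions, wrecking the exponent. The paper avoids this by choosing a very specific cone, $h_\Delta(\overline{(x,a)}) = s_a(\phi(x))$, built from the scaling automorphisms. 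Because $s_a$ is a Lie group automorphism preserving $V_1$, the boundary directions stay horizontal at every interior level $a$, and only the single radial direction $\partial_a s_a(\phi(x))$ lives in all of $\mf g$; hence $\mass(s_t\circ h_\Delta(\Delta)) \le t^{k+1+d}\,\mass(h_\Delta(\Delta))$. You need to write down this particular cone, or something with the same horizontality-preserving property; without it the tangent-plane claim is unjustified.

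The detour is your attempt to re-run the scaling/escalator argument by hand. The paper simply verifies the hypotheses of \cite[Theorem 7]{Young1} with the scaling function $\sim t^{k+1+d}$ and invokes that theorem. Your geometric-series bookkeeping produces the right exponent, but the intermediate claim ``at every intermediate stage the pushed cycle has bounded $(k+1)$-mass, so only boundedly many $(k+2)$-cells are required at each scale'' is false: the number of cells needed at scale $2^{-j}$ is of order $l\cdot 2^{-j(k+1)}$, which is bounded only near the bottom of the telescope. (The sum $\sum_j l\cdot 2^{-j(k+1)}\cdot (2^j)^{k+1+d} = \sum_j l\cdot(2^j)^d \asymp l^{(k+1+d)/(k+1)}$ still gives the right answer, but for a different reason than you give.) Citing Young's theorem, as the paper does, sidesteps this and is cleaner.
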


\begin{proof}
With the proof of Proposition \ref{Propcon} we have the triangulations $(\tau,f)$, $(\eta, \tilde f)$ and the $(k+1)$-horizontal maps $\phi$, $\psi$ in bounded distance to $f$ anf $\tilde f$. Therefore the conditions of \cite[Theorem 7]{Young1} are fulfilled for a function $\sim t^{k+1+d}$ (compare also \cite[Discussion before Theorem 8]{Young1}). \\
Let $\Delta=\Delta^{k+2}$ be a $(k+2)$-simplex of $\tau$. Then $\phi$ is already horizontal on $\partial \Delta$. Identify $\Delta$ with the cone $\opna{Cone}(\partial \Delta)=(\partial \Delta \times [0,1])/(\partial \Delta \times \{0\})$ over the boundary. Define the map 
$$h_\Delta: \Delta \to G, \overline{(x,a)} \mapsto s_a(\phi(x))\ .$$
Then $h_\Delta$ coincides with $\phi$ on $\partial \Delta$. Further holds
$$\mass((s_t\circ h_\Delta)(\Delta)) \le t^{k+1+d} \cdot \mass(h_\Delta(\Delta))$$
as in every point the tangent space to $h_\Delta$ is the span of a $(k+1)$-dimensional horizontal subspace and another vector $v$. As $G$ is $d$-step nilpotent, for $v$ holds $\|s_t(v)\|\le t^d\|v\|$. Replace $\phi$ by $\phi'_{|\Delta}=h_\Delta$ for every $(k+2)$-simplex $\Delta$ of $\tau$ and extend $\phi'$ to all of $\tau$ (compare with the proof of Proposition \ref{Propcon}). \\
With the same techniques one can construct a sufficient map $\psi'$. \\
By \cite[Theorem 7]{Young1} follows: $F^{k+2}_G(l) \preccurlyeq l^\frac{k+1+d}{k+1}$.
\end{proof}

So we obtain the lower bounds on the filling functions by Proposition \ref{Propeta} and Proposition \ref{dimplus}. Together with the upper bounds from Proposition \ref{Propcon} this proves Theorem \ref{Thm1}.

%
%----------------------------------------------------------
%

\subsubsection{The proof of Theorem \ref{Thm2}}
\vspace*{-5mm}

Similarly as for the proof of Theorem \ref{Thm1}, we split the statement of \mbox{Theorem \ref{Thm2}} into parts. In Proposition \ref{topup} we prove the upper bounds and in Proposition \ref{toplow} we prove the lower bounds on the filling functions.

\begin{prop}\label{topup}
Let $G$ be an $n$-dimensional stratified nilpotent Lie group equipped with a left-invariant Riemannian metric. Let $\mf g$ be the Lie algebra of $G$ with grading \\$\mf g=V_1\oplus ... \oplus V_d$. Further let $D =  \displaystyle{\sum_{i=1}^d i \cdot \dim V_i}$ be the Hausdorff-dimension of the asymptotic cone of $G$ and let $k\in \NN$. If there exists a lattice $\Gamma \subset G$ with $s_2(\Gamma)\subset \Gamma$ and a  $(k+1)$-dimensional $\Omega$-regular, $\Omega$-isotropic subspace $S \subset V_1 $, then holds:  
$$F^{n-j}_G(l) \preccurlyeq l^\frac{D-j}{D -j-1} \quad \text{for all } j \le k-1.$$
\end{prop}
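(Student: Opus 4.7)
The plan is to verify the hypotheses of Young's high-dimensional filling theorem (Theorem \ref{ThmYoungII}) with $m = k+1$, in close parallel to the proof of Proposition \ref{Propcon} but targeting the interpolation map $\psi$ on $G\times[1,2]$ rather than the chains on $G$ alone. Concretely, I would first build the required triangulations: since $\Gamma$ is torsion free by \cite{Raghunathan}, the quotient $M=G/\Gamma$ is a closed smooth manifold, so picking a triangulation $(\tau_M,f_M)$ of $M$ and lifting gives a $\Gamma$-adapted triangulation $(\tau,f)$ of $G$. Applying Young's Lemma 4.5 produces an $s_2(\Gamma)$-adapted triangulation $(\widetilde\tau,\widetilde f)$ of $G\times[1,2]$ that agrees with $(\tau,f)$ at $t=1$ and with $(\tau,s_2\circ f)$ at $t=2$. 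The initial candidate map is $\psi_0\defeq \opna{pr}_G\circ\widetilde f$, which is continuous, $s_2(\Gamma)$-equivariant, and already satisfies the scaling boundary condition $\psi_0(g,2)=s_2(\psi_0(s_{1/2}(g),1))$.

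Next I would use the $(k+1)$-dimensional $\Omega$-regular, $\Omega$-isotropic subspace $S\subset V_1$ together with Proposition \ref{ThmGromov} (applied in dimension $m=k+1$) to approximate $\psi_0^{(k+1)}$ by a piecewise smooth, piecewise horizontal map $\psi^{(k+1)}:\widetilde\tau^{(k+1)}\to G$. Because horizontality is both left-invariant and preserved by all scaling automorphisms $s_t$ (see the lemma of Section \ref{SectionSNLG}), one can perform the horizontal approximation on a single fundamental domain for the $s_2(\Gamma)$-action on $\widetilde\tau$ and propagate it to the whole skeleton by the $s_2(\Gamma)$-action, producing an equivariant $\psi^{(k+1)}$; the scaling identity $\psi^{(k+1)}(g,2)=s_2(\psi^{(k+1)}(s_{1/2}(g),1))$ is enforced analogously by approximating first at $t=1$ and defining the values at $t=2$ by the formula, the approximation extending to the interior via the microflexibility underlying Proposition \ref{ThmGromov}. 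Finally, using contractibility of $G$, fill in the higher-dimensional skeleta of $\widetilde\tau$ to obtain a piecewise smooth map $\psi:\widetilde\tau\to G$ which is $(k+1)$-horizontal, $s_2(\Gamma)$-equivariant, and satisfies the required scaling compatibility.

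With these data in hand, Theorem \ref{ThmYoungII} applies with $m=k+1$ and yields
\[
F^{n-j}_G(l)\preccurlyeq l^{\frac{D-j}{D-j-1}}\quad\text{for all }j\le m-1=k,
\]
from which the claim for $j\le k-1$ follows. The main obstacle is the combined equivariance step: Proposition \ref{ThmGromov} is, in its basic form, a statement about approximating an \emph{individual} continuous map, and to feed it into Young's theorem one must guarantee simultaneous $s_2(\Gamma)$-equivariance and the scaling intertwining with $s_2$. I expect this to require working carefully on a fundamental domain, using that $s_2$ conjugates $\Gamma$ into $s_2(\Gamma)$ and that the horizontal distribution is invariant under both left translations and scaling automorphisms, so that pushing the approximation around by the discrete symmetry group does not destroy its horizontal character.
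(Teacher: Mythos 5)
Your plan is structurally close to the paper's, but it overclaims in a way that glosses over exactly the technical point that forces the proposition to read $j\le k-1$ rather than $j\le k$.

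The crux: you propose to feed Theorem~\ref{ThmYoungII} a map $\psi:\widetilde\tau\to G$ that is $(k+1)$-horizontal, and hence to apply it with $m=k+1$, concluding $j\le k$ (and then discarding a dimension to match the statement). The paper's proof only produces a $k$-horizontal interpolation map, applies the theorem with $m=k$, and gets $j\le k-1$ --- which is precisely what is stated. The loss of one dimension is not cosmetic; it is forced by the nature of the interpolation between the two boundary slices. In the low-dimensional case (Theorem~\ref{ThmYoung}, used in Proposition~\ref{Propcon}), the extension condition on $\psi$ binds only on the $m$-skeleton of the boundary slices, so one can freely horizontally approximate $\psi_0^{(k+1)}$ on the entire $(k+1)$-skeleton of $\eta$ using Proposition~\ref{ThmGromov}. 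In Theorem~\ref{ThmYoungII}, by contrast, the functional equation $\psi(g,2)=s_2\bigl(\psi(s_{1/2}(g),1)\bigr)$ is a pointwise constraint on the \emph{full} boundary slices. The paper therefore first fixes the whole boundary (by producing a $(k+1)$-horizontal map $\psi^1_{M_2}$ at $t=1$ and defining $\psi^2_{M_2}$ at $t=2$ by the scaling formula) and then approximates the Lipschitz interpolation $\psi'_{M_2}$ on the interior \emph{relative to the already-fixed boundary slices}. For this relative step it invokes the (relative version of the) Holonomic Approximation Theorem of Eliashberg--Mishachev, and that tool only delivers a $k$-horizontal map, not a $(k+1)$-horizontal one.

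Your proposal instead appeals to ``the microflexibility underlying Proposition~\ref{ThmGromov}'' to extend into the interior. But Proposition~\ref{ThmGromov} / Proposition~\ref{approx} is not a relative approximation theorem: it approximates a free map, allowing the domain to be wiggled (the ``folded'' immersions), and it gives no way to stay compatible with boundary data that is already prescribed on both slices and linked by the scaling identity. Without a genuinely relative approximation mechanism, the step from the fixed boundary into the interior is unsubstantiated; and with the relative Holonomic Approximation Theorem, one only reaches $k$-horizontality. So the target $m=k+1$ is not achieved by this route; the correct value is $m=k$. Your final ``from which the claim for $j\le k-1$ follows'' thus rests on an unproved stronger intermediate claim. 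Secondly, a smaller point: you propose handling $s_2(\Gamma)$-equivariance by working on a fundamental domain and pushing the approximation around by the group action. The paper instead descends to the compact quotient $M_2=G/s_2(\Gamma)$ (where the distribution still makes sense because microflexibility is local) and approximates there, which automatically yields an equivariant lift. Either route is plausible, but the fundamental-domain route requires one to verify that the approximation can be matched across the faces of the domain after translation; the quotient route sidesteps this bookkeeping.
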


\begin{proof}
We want to use Young's filling theorem for high dimensions (Theorem \ref{ThmYoungII}). So we have to check that our conditions imply the conditions of this theorem. This means we have to construct an $s_2(\Gamma)$-adapted triangulation $(\tilde \tau, \tilde f)$ of $G\times [1,2]$, such  that the restrictions to $G\times \{i\}$, $i=1,2$ are transformed into each other by the scaling $s_2$. Further we need a piecewise smooth, $s_2(\Gamma)$-equivariant, $k$-horizontal map $\psi: \tilde \tau \to G$ with $\psi(x,2)=s_2(\psi(s_\frac{1}{2}(x),1))$. Here we used the notation $(x,t)$ for points in $\tilde\tau$ as this simplicial complex is homeomorhic to $G\times [1,2]$. Further does $s_\frac{1}{2}$ denote the change from the triangulation of $G\times \{2\}$ to the triangulation of $G\times \{1\}$.\\\quad\\
We look at the quotient $M=G/\Gamma$. As $G$ is a simply connected nilpotent Lie group and as $\Gamma$ is a lattice in $G$, we know by \cite[Theorem 2.18]{Raghunathan} that $\Gamma$ is torsion free. So $M$ is a smooth manifold.\\
Let $(\tau_M,f_M)$ be a triangulation of $M$. Then $(\tau_M,f_M)$ lifts to a $\Gamma$-adapted triangulation $(\tau,f)$ of $G$.\\
Let $M_2=G/s_2(\Gamma)$. This is again a smooth manifold. The $\Gamma$-adapted triangulation $(\tau,f)$ projects down to a triangulation $(\tau_{M_2}, f_{M_2})$ of $M_2$.\\
Denote by $s_i(\tau)$ the triangulation $(\tau, s_i \circ f)$. Let $\varphi_\gamma: \tau \to \tau $, $\gamma \in \Gamma$, be the $\Gamma$-action on $\tau$. Then we define the $s_2(\Gamma)$-action on $s_1(\tau)$ by $\varphi^1_\gamma=\varphi_{s_\frac{1}{2}(\gamma)}$ and on $s_2(\tau)$ by $\varphi^2_\gamma=\varphi_{\gamma}$ for $\gamma \in s_2(\Gamma)$. In respect to this actions, both triangulations are $s_2(\Gamma)$-adapted.\\
As in \cite[Proposition 4.5]{Young1} we can extend the projected triangulation to a triangulation $(\tilde \tau_{M_2}, \tilde f_{M_2})$ of $M_2 \times [1,2]$, such that this restricts to $s_i(\tau)/s_2(\Gamma)$ on $M_2\times\{i\}$, $i=1,2$.  Then $(\tilde \tau_{M_2}, \tilde f_{M_2})$ lifts to a $s_2(\Gamma)$-adapted triangulation $(\tilde \tau ,\tilde f)$ of $G\times [1,2]$ with the required restrictions.\\\quad\\
To construct the map $\psi$ we use the $h$-principle on $M_2$. The translates of the $(k+1)$-dimensional $\Omega$-regular, $\Omega$-isotropic subspace $S\subset V_1$ descend to a continuous subbundle of the tangent bundle of $M_2$ (consisting of $(k+1)$-planes). As the property to be microflexible is local, it descends to $M_2$, too. So we can use Proposition \ref{approx} (see Remark at the end of Section \ref{S4}).\\
As $\tilde \tau_{M_2}$ is homeomorphic to $M_2 \times [1,2]$, we can write each point of $\tilde \tau_{M_2}$ as $(x,t)$, where the second entry is the image of the point in the second factor of $M_2 \times [1,2]$.\\
Denote by $\opna{pr}_{M_2}$ the projection to the first factor of $M_2 \times [1,2]$. Define the map
$$\psi_0 : \{(x,1) \in \tilde \tau_{M_2}\} \to M_2 \ , \ (x,1) \mapsto \opna{pr}_{M_2}\big( \tilde f_{M_2}(x,1) \big)=\opna{pr}_{M_2}\big( f(x)s_2(\Gamma)\big) \ .$$
We approximate $\psi_0$ on $\{(x,1) \in \tilde \tau_{M_2}\}$ by a $(k+1)$-horizontal immersion $\psi^1_{M_2}$ using Proposition \ref{approx}. Then we lift $\psi^1_{M_2}$ to a $s_2(\Gamma)$-equivariant map $\psi': \tau \to G$ and define
$$\psi^2_{M_2}:\{(x,2) \in \tilde \tau_{M_2}\} \to M_2 \ , \ (x,2) \mapsto \opna{pr}_{M_2}\big( (s_2 \circ \psi')(s_\frac{1}{2}(x))s_2(\Gamma)\big) $$
where $s_\frac{1}{2}(x)$ denotes the change from the triangulation $s_2(\tau)$ to the triangulation $s_1(\tau)$.\\
Then we extend $\psi^1_{M_2}$ and $\psi^2_{M_2}$ to a Lipschitz map $\psi'_{M_2}$ from $\tilde \tau_{M_2}$ to $M_2$ by defining ${\psi'_{M_2}}_{|\tilde \tau_{M_2}^{(0)}}={\psi_{0}}_{|\tilde \tau_{M_2}^{(0)}}$ on the vertices and then successively filling the boundaries of the simplices.\\
To complete the construction we use (the relative version of) the Holonomic Approximation Theorem from \cite{hprinciple} and approximate $\psi'_{M_2}$ on all of $\tilde \tau_{M_2}$, fixed on $\{(x,i)\in \tilde \tau_{M_2} \mid i \in \{1,2\}\}$, by a $k$-horizontal, piecewise smooth map 
$$\psi_{M_2}:\tilde \tau_{M_2} \to M_2$$
with $\psi_{M_2}(x,i)=\psi'_{M_2}(x,i)$ for all $(x,i) \in \tilde \tau_{M_2}$ with $i \in \{1,2\}$.\\
This map rises to a $s_2(\Gamma)$-equivariant, piecewise smooth, $k$-horizontal map \\$\psi :\tilde \tau \to G$ with $\psi(x,2)=s_2(\psi(s_\frac{1}{2}(x),1))$.\\\quad\\
So the conditions of Theorem \ref{ThmYoungII} are fulfilled and we get $F^{n-j}_G(l) \preccurlyeq l^\frac{D-j}{D -j-1}$ for all $j \le k-1$. 
\end{proof}

It remains to prove the lower bounds. As in the proof of the low dimensions (Theorem \ref{Thm1}), we will use Burillo's  Theorem \ref{Burillo}.

\begin{prop}\label{toplow}
Let $G$ be an $n$-dimensional stratified nilpotent Lie group equipped with a left-invariant Riemannian metric. Let $\mf g$ be the Lie algebra of $G$ with grading \\$\mf g=V_1\oplus ... \oplus V_d$. Further let $D =  \displaystyle{\sum_{i=1}^d i \cdot \dim V_i}$ be the Hausdorff-dimension of the asymptotic cone of $G$ and let $k \in \NN$. If there exists  a $(k+1)$-dimensional $\Omega$-isotropic subspace $S\subset V_1$, then holds:
$$F^{n-j}_G(l) \succcurlyeq l^\frac{D-j}{D -j-1} \quad \text{for all } j \le k.$$
\end{prop}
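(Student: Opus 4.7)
The plan is to apply Burillo's theorem (Theorem \ref{Burillo}) with $m = n-j$, $r = D-j-1$, and $s = D-j$; that is, to produce a closed $G$-invariant $(n-j)$-form $\gamma$ and a Lipschitz $(n-j)$-chain $b$ satisfying $s_t^*\gamma = t^{D-j}\gamma$, $\int_b\gamma > 0$, and $\mass(s_t(\partial b)) \le Ct^{D-j-1}$. Morally, this is dual to the construction in Proposition \ref{Propeta}: there one integrates a horizontal volume form over a ball inside the integral submanifold of a subspace of $S$, whereas here one integrates a top form of a transverse subgroup $H$ over a ball inside $H$.

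Since $\dim S = k+1 > j$, first choose $j$ linearly independent vectors $X_1,\dots,X_j \in S$ and extend them to a basis $X_1,\dots,X_n$ of $\mf g$ adapted to the grading $V_1\oplus\cdots\oplus V_d$ (so $X_1,\dots,X_{n_1}\in V_1$, and so on). Let $\deg(i)\in\{1,\dots,d\}$ denote the layer of $X_i$. Since $[V_a,V_b]\subset V_{a+b}$ whenever $a+b\ge 2$, the subspace $\mf h \defeq \langle X_{j+1},\dots,X_n\rangle$ is automatically a graded subalgebra of $\mf g$, and $H\defeq\exp(\mf h)$ is a stratified nilpotent subgroup of $G$ of dimension $n-j$ whose asymptotic cone has Hausdorff dimension $D-j$. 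Define
$$\gamma \defeq X_{j+1}^*\wedge X_{j+2}^*\wedge\cdots\wedge X_n^*,$$
where $X_i^*$ is the left-invariant dual of $X_i$. Then $s_t^*\gamma=t^{D-j}\gamma$ follows from $s_t^*X_i^* = t^{\deg(i)}X_i^*$ together with $\sum_{i>j}\deg(i) = D-j$.

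The main technical step, and the only place where the $\Omega$-isotropy of $S$ is used, is to verify that $\gamma$ is closed. Writing $\gamma = \pm\iota_{X_j}\cdots\iota_{X_1}\omega_n$ with $\omega_n = X_1^*\wedge\cdots\wedge X_n^*$ the top left-invariant form, we have $d\omega_n = 0$ trivially and $\mathcal L_{X_a}\omega_n = 0$ by unimodularity of nilpotent groups (the formula $\mathcal L_{X}\omega_n = \mathrm{tr}(\mathrm{ad}_X)\omega_n$ vanishes since $\mathrm{ad}_X$ is nilpotent). Iterating Cartan's magic formula $d\iota + \iota d = \mathcal L$ together with the identity $[\mathcal L_X,\iota_Y] = \iota_{[X,Y]}$ reduces $d\gamma$ to a linear combination of contractions of the form $\iota_{[X_a,X_b]}(\cdots\omega_n)$ with $1\le a,b\le j$. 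Since $S$ is $\Omega$-isotropic, hence an abelian subalgebra by the discussion following its definition, all such commutators $[X_a,X_b]$ vanish, so $d\gamma = 0$.

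Finally, take $b$ to be a Lipschitz triangulation of the Riemannian unit ball $B^H(e,1) \subset H$. Because $X_{j+1},\dots,X_n$ is a basis of $\mf h$ whose $\mf g$-duals restrict to the $\mf h$-duals, the form $\gamma|_H$ is (up to sign) the left-invariant volume form on $H$, so condition (2) reduces to $\int_b\gamma = \pm\mathrm{vol}_H(B^H(e,1)) > 0$ after fixing orientations. For condition (1), the $(n-j-1)$-tangent plane of $\partial b$ at each point is a combination of simple $(n-j-1)$-vectors of $\mf h$ obtained by omitting one basis index $i_0 \in \{j+1,\dots,n\}$, and $s_t$ stretches each such simple vector by at most $t^{(D-j)-\deg(i_0)}\le t^{D-j-1}$. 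Integrating this uniform pointwise bound over the fixed compact $\partial b$ gives $\mass(s_t(\partial b))\le Ct^{D-j-1}$, and Theorem \ref{Burillo} yields $F^{n-j}_G(l)\succcurlyeq l^{(D-j)/(D-j-1)}$.
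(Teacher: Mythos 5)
Your proof is correct and follows essentially the same strategy as the paper's: both pick a basis adapted to the grading with the first $j$ vectors lying in the $\Omega$-isotropic subspace $S$, both take $\gamma$ to be the wedge of the duals of the remaining $n-j$ basis vectors (so that $s_t^*\gamma = t^{D-j}\gamma$), both take $b$ living inside the exponential image of the span of those remaining vectors, and both derive closedness of $\gamma$ from the commutativity of $\langle X_1,\dots,X_j\rangle\subset S$. The only real divergence is in the closedness computation: the paper expands $d\gamma$ term by term via the formula for the differential of a left-invariant form and then kills the surviving terms case by case, whereas you repackage the same computation cleanly through Cartan calculus ($\gamma = \pm\iota_{X_j}\cdots\iota_{X_1}\omega_n$, $d\omega_n=0$, $\mathcal L_X\omega_n=0$ by unimodularity, $[\mathcal L_X,\iota_Y]=\iota_{[X,Y]}$, then induction). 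That is a genuinely slicker way to organise the argument, and it makes transparent exactly where the $\Omega$-isotropy enters. Minor presentation points: the mass estimate for $\partial b$ implicitly uses that the basis can be taken orthonormal for a metric fitting the grading (or at least that the $\mu_{i_0}$ span $\Lambda^{n-j-1}\mathfrak h$ and all norms are equivalent on it), which the paper states explicitly at the outset; and the parenthetical claim that $H$ is stratified with Hausdorff dimension $D-j$ is not needed (you only use that $\mathfrak h$ is a subalgebra and $\gamma|_H$ is the volume form), and in fact $[W_1,W_1]=V_2$ may fail for the complementary piece $W_1\subset V_1$, so $\mathfrak h$ need not be stratified — better to drop that remark.
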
 

\begin{proof}
As all left-invariant Riemannian metrics on $G$ are biLipschitz equivalent, we can choose one of our liking. So let the left-invariant Riemannian metric on $G$  fitting to the grading of $\mf g$, i.e.  $V_s\perp V_t$ for $s \ne t$.\\
Let $j\le k$. We will use the filling theorem of Burillo (see Theorem \ref{Burillo}) with $m=n-j$.\\
To do this, we consider the grading $\mf g=V_1\oplus ... \oplus V_d$. We choose an orthonormal basis $B_1=\{v_1^{(1)},...,v_{\dim V_1}^{(1)}\}$ of $V_1$, such that  the vectors $v_1^{(1)},...,v_{k+1}^{(1)}$ span the $(k+1)$-dimensional $\Omega$-regular, $\Omega$-isotropic subspace $S\subset V_1$. Then we choose on each summand $V_i$ an orthonormal basis 
$$B_i=\{v_1^{(i)},...,v_{\dim V_i}^{(i)}\}$$
which gives us an orthonormal basis $B=\bigcup_{i=1}^d B_i$ of the Lie algebra $\mf g$.\\
As $m$-chain $b$ we choose now the image under the exponential map of the unit cube in all coordinates of $B$ except $v_1^{(1)},...,v_{j}^{(1)}$, i.e.
$$b=\exp(\{\sum_{i=1}^d\sum_{q=1}^{\dim V_i} \alpha_{i,q}\cdot v_q^{(i)}\mid 0\le \alpha_{i,q} \le 1 \ \forall i\ \forall q \text{, and } \alpha_{1,q}=0 \text{ for } 1 \le q \le j\})\ .$$
The vectors of the $i^{th}$ layer $V_i$ of the Lie algebra are scaled under the scaling automorphism $L(s_t):\mf g \to \mf g$  in the way 
$$L(s_t)(v^{(i)})=t^iv^{(i)}\ .$$
So we have for the cube $b$
$$\mass(s_t(b))=t^{D-j} \mass(b)\ .$$
The proof of this scaling behaviour follows the same lines as the proof of Lemma \ref{masslem}. The only difference is, that the tangent space of the image of the chain $b$ always is a translate of $\mf g /W$ for $W=\langle v_1^{(1)},v_2^{(1)},...,v_{j}^{(1)} \rangle$. Therefore the differential of the sclaing automorphism $\hat s_t$ is the diagonal matrix
%
%%
%$$D\hat s_t = \begin{pmatrix} t &&&&&&& \\ & \ddots&&&&&&\\ &&t&&&&&& \\ &&& t^2 &&&&& \\ &&&& \ddots &&&& \\ &&&&& t^2 &&& \\&&&&&&t^3 \\&&&&&&& \ddots & \\ &&&&&&&& t^d\end{pmatrix} $$
%%
%
with $(\dim V_1 - j)$ many $t$'s on the diagonal and $\dim V_i$ many $t^i$'s on the diagonal for $2 \le i \le d$. So the determinant of this matrix is $t^{D-j}$ and this implies the scaling behaviour.\\
Then the boundary $\partial b$ of $b$ consists of all unit cubes contained in $b$ of dimension one smaller than $b$ and with one additional coordinate set $1$ or $0$. This further by now constant coordinate has scaled under the scaling automorphism at least linearly, and so we get 
$$\mass(s_t(\partial b)) \le t^{D-j-1} \cdot \mass(\partial b)\ .$$
Next we need to construct the $G$-invariant, closed $m$-form $\gamma$. We do this by choosing $\gamma$ as the volume form  of $b$:
$$\gamma=(v_{j}^{(1)})^*\wedge ... \wedge (v_{\dim V_d}^{(d)})^* \ .$$
Here $v^*$ denotes the dual form of $v \in \mf g$. This form $\gamma$ is by definition $G$-invariant and 
$$\int_b \gamma = \mass(b) >0$$
as $\gamma$ is the volume form of $b$.\\
For the scaling behaviour of $\gamma$ we get
$$s_t^*\gamma=t^{D-j}\gamma $$
with the same argument as above for the scaling behaviour of the cube $b$.\\\quad\\
So it remains to show, that $\gamma$ is closed. For that, we recall the formula for the differential of a $G$-invariant $p$-form $\omega$:
$$(p+1)!(\opna d \omega)(X_0,...,X_p)=\sum_{s<t}(-1)^{s+t+1}\omega([X_s,X_t],X_0,...,\widehat X_s,...,\widehat X_t,...,X_p)\ .$$ 
The key essence of this formula is that it suffices to examine ``pre-images" of basis vectors under the Lie bracket to compute the differential of $\gamma$. This follows as a dual form $v^*$ only sees the projection to the subspace spanned by $v$. To be precise, the differential of $v^*$ is  
$$2\cdot(\opna d\! v^*)(X,Y)\stackrel{(1)}=v^*([X,Y])\stackrel{(2)}= \sum_{u,w \in B \atop [u,w]=v} u^*(X)\cdot w^*(Y)= \sum_{u,w \in B \atop [u,w]=v} (u^*\wedge w^*)(X,Y)$$
where $(1)$ holds by the above formula and $(2)$ is the evaluation by computing the ``length'' of the projection to the subspace $\langle v \rangle \subset \mf g$.\\
For the left-invariant $m$-form $\gamma$ this leads to
$$ \opna d \! \gamma= \frac{1}{(m\!+\!1)!}\sum_{i=1}^d\sum_{q=1}^{\dim V_i}\!\!\sum_{v,w \in B \atop [v,w]=v_q^{(i)}} \!\!(\text{-}1)^{i+q+1}v^*\wedge w^* \wedge (v_{j+1}^{(1)})^* \wedge ... \wedge \widehat{(v_{q}^{(i)})^*} \wedge ... \wedge (v_{\dim V_d}^{(d)})^*.$$
Now let $i\ge 2$ (the case $i=1$ is trivial, as $V_1$ has zero intersection with the image of the Lie bracket). Let $v_q^{(i)} \in V_j$ be one of the above chosen basis vectors. For each pair 
$$x=\sum_{s=1}^d\sum_{t=1}^{\dim(V_s)} \alpha_{st}v_t^{(s)} \ ,\quad y=\sum_{s=1}^d\sum_{t=1}^{\dim(V_s)} \beta_{st}v_t^{(s)} \in \mf g$$
with $[x,y]=v_q^{(i)}$ we get:
$$[x,y]=\sum_{s_1,s_2=1}^d\sum_{t_1,t_2=1}^{\dim(V_s)} \alpha_{s_1t_1}\beta_{s_2t_2}[v_{t_1}^{(s_1)},v_{t_2}^{(s_2)}]\ .$$
By this and by the linearity of differential forms we can assume without loss of generality, that $x$ and $y$ are basis vectors in $B$.\\
We first look at the case $i=2$. In this case we have $x,y \in B_1$ as $[V_1,V_u] = V_{u+1}$. The first $k+1$ vectors $v_1^{(1)},...,v_{k+1}^{(1)}$ span the $\Omega$-isotropic subspace $S\subset V_1$ and therefore $[v_s^{(1)},v_t^{(1)}]=0$ for $1\le s,t \le k+1$. So at least one of the vectors $x$ and $y$ has to be in $B_1 \setminus \{v_1^{(1)},...,v_{k+1}^{(1)}\}$. But all dual forms of basis vectors $v_\ell^{(1)}$ with $\ell > k+1 >j-1$ are part of $\gamma$ (and are not the deleted $v_i^{(2)}$ ). Therefore all these summands in $\opna d \! \gamma$ are zero.\\
Now let $i\ge3$. Then at least one of the vectors $x$ and $y$ has to be a basis vector in a layer $V_\ell$ with $2 \le \ell \le i-1$. But again all the dual forms of such basis vectors are part of $\gamma$ and unequal to the deleted $v_q^{(i)}$. Therefore all these summands in  $\opna d \! \gamma$ are zero, too.\\
This implies 
$$\opna d \! \gamma = 0$$
and we can apply Burillo's theorem and gain
$$F^{n-j}_G(l) \succcurlyeq l^\frac{D-j}{D -j-1}$$
which is the desired bound.
\end{proof}

%
%----------------------------------------------------------
%

\subsubsection{The proof of Theorem \ref{Thm3}}
\vspace*{-5mm}
Theorem \ref{Thm3} reduces the conditions of Theorem \ref{Thm1} and Theorem \ref{Thm2} in the case of $2$-step nilpotent Lie groups. It omits the condition of the existence of the lattice $\Gamma$, but doesn't weaken the statements about the filling functions.
Therefore Theorem \ref{Thm3} follows from Theorem \ref{Thm1} and Theorem \ref{Thm2} if in every $2$-step nilpotent Lie group there exists a lattice with the requested scaling property. So we only have to prove the following lemma:

\begin{lem}\label{lattice}
Let $G$ be a simply connected $2$-step nilpotent Lie group with Lie algebra $\mf g$. Then there exists a lattice $\Gamma \subset G$ with $s_2(\Gamma)\subset \Gamma$. 
\end{lem}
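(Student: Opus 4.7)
The idea is to realise $\Gamma$ as the subgroup of $G$ generated by the exponentials of a carefully chosen basis of $\mf g$, and then to check that $s_2$ preserves this subgroup. The argument splits into three steps: constructing a basis of $\mf g$ with integer structure constants, verifying that the resulting subgroup is a cocompact discrete subgroup of $G$, and checking the scaling condition.

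First I would produce a basis of $\mf g$ adapted to the grading with integer structure constants. Fix any basis $e_1,\ldots,e_{n_1}$ of $V_1$. Since $V_2=[\mf g,\mf g]$, the skew-bilinear map $\omega\colon\Lambda^2 V_1\to V_2$ is surjective, so among the brackets $[e_i,e_j]$ one can select $n_2=\dim V_2$ of them that form an $\RR$-basis of $V_2$; call these $f_1,\ldots,f_{n_2}$. After a suitable linear change of basis of $V_1$, every remaining bracket $[e_i,e_j]$ becomes a $\QQ$-linear combination of the $f_k$, and rescaling the $e_i$ and $f_k$ by positive integers clears the denominators. This yields a basis of $\mf g$ in which all structure constants are integers.

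Second, let $\Gamma$ be the subgroup of $G$ generated by $\exp(e_1),\ldots,\exp(e_{n_1}),\exp(f_1),\ldots,\exp(f_{n_2})$. Because $G$ is $2$-step nilpotent, the Baker--Campbell--Hausdorff formula collapses to $\exp(X)\exp(Y)=\exp(X+Y+\tfrac{1}{2}[X,Y])$ and $[\exp(X),\exp(Y)]=\exp([X,Y])$, so integer structure constants guarantee that $\Gamma\cap\exp(V_2)$ is precisely the $\ZZ$-lattice spanned by $\exp(f_1),\ldots,\exp(f_{n_2})$, while the image of $\Gamma$ in $G/\exp(V_2)\cong V_1$ is the $\ZZ$-lattice spanned by $e_1,\ldots,e_{n_1}$. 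Consequently $\Gamma$ is discrete and cocompact in $G$.

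Finally, since $s_2$ acts on $V_j$ as multiplication by $2^j$, we have $s_2(\exp(e_i))=\exp(2e_i)=\exp(e_i)^2\in\Gamma$ and $s_2(\exp(f_k))=\exp(4f_k)=\exp(f_k)^4\in\Gamma$. As $s_2$ is a group homomorphism, it sends the subgroup generated by these elements into itself, yielding $s_2(\Gamma)\subset\Gamma$. The main obstacle is the first step: the rationality of $\omega$ in an appropriate basis is what fails in general nilpotent Lie algebras, but which I expect can be arranged in the $2$-step setting by a carefully chosen change of basis of $V_1$ that separates the $\QQ$-linear dependencies among the brackets.
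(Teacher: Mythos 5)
Your proof follows the paper's strategy almost exactly: produce a basis of $\mf g$ adapted to the grading with integer structure constants, exponentiate its $\ZZ$-span to get a discrete cocompact subgroup, and verify $s_2$-invariance on the generators via Baker--Campbell--Hausdorff. The differences (you argue cocompactness directly where the paper cites Raghunathan; the paper inserts the elements $\tfrac12[a,b]$ into the basis $B_2$ to absorb the half-integer BCH terms) are cosmetic.

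The gap, however, is exactly where you suspect it, and it is not repairable: a basis with rational structure constants cannot always be arranged. By Malcev's criterion (\cite[Theorem 2.12]{Raghunathan}) a simply connected nilpotent Lie group admits a lattice if and only if its Lie algebra has a $\QQ$-structure, and there exist $2$-step nilpotent Lie algebras with no $\QQ$-structure. A $2$-step nilpotent Lie algebra of type $(n_1,n_2)$ is determined up to isomorphism by the $\operatorname{PGL}(n_1,\RR)$-orbit of the kernel of the bracket map, a codimension-$n_2$ subspace of $\Lambda^2 V_1$; whenever $n_2\bigl(\binom{n_1}{2}-n_2\bigr) > n_1^2-1$ (for instance $(n_1,n_2)=(6,3)$ in total dimension $9$, or $(5,5)$ in total dimension $10$), the orbit space is positive-dimensional, so there are uncountably many isomorphism classes of which only countably many are defined over $\QQ$. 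Such groups have no lattice at all, so no ``carefully chosen change of basis'' can rescue the first step. You should also be aware that the paper's own proof contains the same unjustified assertion: after selecting $B_2$ out of $\{\tfrac12[a,b]\mid a,b\in B_1\}$, it claims that ``$\mc Z$ is, by construction, closed under the Lie bracket,'' which need not hold for an arbitrary choice of $B_1$ and cannot hold for any choice of $B_1$ when no $\QQ$-structure exists.
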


\begin{proof}
Let $\mf g = V_1 \oplus V_2$ be the grading of the Lie algebra. Then take a basis 
$$B_1=\{v_1^{(1)},v_1^{(2)},...,v_1^{(\dim V_1)}\}$$
of the first layer. For $V_2$ we complete $\{\frac{1}{2}[a,b] \mid a,b \in B_1\}$ to a basis 
$$B_2=\{v_2^{(1)},v_2^{(2)},...,v_2^{(\dim V_2)}\}$$
of $V_2$. This leads to a Basis $B=B_1 \cup B_2$ of the Lie algebra $\mf g$.\\
Now let 
$$\mc Z\defeq  \langle \{b \mid b \in B\} \rangle_\ZZ \subset \mf g$$
be the $\ZZ$-span of this basis. Then $\mc Z$ is, by construction, closed under the Lie bracket $[\cdot, \cdot]$ and fulfils $L(s_2)(\mc Z) \subset \mc Z$ as $2t\in \ZZ$ for all $t \in \ZZ$.\\
Then define 
$$ \Gamma \defeq  \langle \exp(\mc Z) \rangle\ .$$
Then $\Gamma \le G$ is a lattice :\\
The structural constants with respect to the basis $B$ of $\mf g$ are rational. The set $\mc Z$ is lattice of maximal rank in the $\QQ$-span $\mf g_\QQ$ of $B$. Therefore the group generated by $\exp(\mc Z)$ is a lattice in $G$ (see \cite[Theorem 2.12]{Raghunathan}).\\\quad\\
The Baker-Campbell-Hausdorff formula in the case of $2$-step nilpotent Lie groups reduces to 
$$\exp(x)\exp(y)=\exp(x+y+\frac{1}{2}[x,y])$$
and so for $h=\exp(x),g=\exp(y) \in \Gamma$ with $x,y \in \mc Z$ holds:
$$hg=\exp(x)\exp(y)=\exp(x+y+\frac{1}{2}[x,y])$$
The product $hg$ is in $\exp(\mc Z)$ as $\frac{1}{2}[x,y] \in \mc Z$. (One can see this by writing $x=\sum_{b\in B} \alpha_b b$ and $y=\sum_{b\in B} \beta_b b$). Therefore holds $ \Gamma = \exp(\mc Z) $.\\\quad\\
Every $g\in \Gamma$ can be written as 
$$g=\exp( \sum_{b\in B_1} n_b b + \sum_{b\in B_2} m_b b)$$
and so 
$$s_2(g)=\exp\big(L(s_2)(\sum_{b\in B_1} n_b b + \sum_{b\in B_2} m_b b)\big)=\exp(\sum_{b\in B_1} 2n_b b + \sum_{b\in B_2} 4m_b b) \in \Gamma\ .$$
Therefore $\Gamma$ fulfils $s_2(\Gamma) \subset \Gamma$.
\end{proof}

\subsubsection{The proof of Theorem \ref{Thm4}}
\vspace*{-5mm}

Recall that we denote the set of integral $m$-currents by $\boldsymbol I_m(G)$ and set of integral $m$-currents with compact support by $\boldsymbol I_m^{cpt}(G)$.\\\quad\\
For the proof of Theorem \ref{Thm4} we need two propositions. The first is due to Wenger:

\begin{prop}[{\cite[Proposition 3.6]{Wenger11}}]\label{II}
Let $G=(G,\opna d)$ be a stratified nilpotent Lie group equipped with a left-invariant Riemannian metric and denote by $G_\infty=(G,\opna d_c)$ the same group equipped with its Carnot-Carath\'eodory metric. If $G$ satisfies an Euclidean isoperimetric inequality for $\boldsymbol I_m^{cpt}(G)$, then $G_\infty$ satisfies an Euclidean isoperimetric inequality for $\boldsymbol I^{cpt}_m(G_\infty)$.
\end{prop}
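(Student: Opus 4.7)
\begin{proofof}{Proposition \ref{II}}
The plan is to exploit two features of the setup simultaneously: by the proposition of Pansu recalled earlier, $G_\infty$ is the asymptotic cone of $(G,\opna d_g)$, i.e.\ $(G,\tfrac{1}{r}\opna d_g)\to G_\infty$ in the pointed Gromov--Hausdorff sense, and Euclidean isoperimetric inequalities are scale-invariant under $d\mapsto d/r$. Accordingly, a filling in $G_\infty$ should arise as a rescaled limit of fillings in $G$.

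First I would blow up the cycle. Let $T\in \boldsymbol I_m^{cpt}(G_\infty)$ be a cycle, write $V:=\boldsymbol M(T)$, and set $T_r:=(s_r)_*T\in\boldsymbol I_m^{cpt}(G_\infty)$ for $r\ge 1$. Because $\opna d_c$ scales exactly by the factor $r$ under $s_r$, the mass is $\boldsymbol M(T_r)=r^m V$. Since $\iota\colon (G,\opna d_c)\to (G,\opna d_g)$ is $1$-Lipschitz by Lemma \ref{1lip}, the pushforward $\iota_*T_r\in \boldsymbol I_m^{cpt}(G)$ is still a cycle and satisfies $\boldsymbol M_g(\iota_*T_r)\le r^m V$. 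By the standing Euclidean isoperimetric hypothesis on $(G,\opna d_g)$ there is a filling $\widetilde S_r\in \boldsymbol I_{m+1}^{cpt}(G)$ with $\partial \widetilde S_r=\iota_*T_r$ and
\[
\boldsymbol M_g(\widetilde S_r)\;\le\; C(r^m V)^{(m+1)/m}\;=\; C\,r^{m+1}\, V^{(m+1)/m}.
\]

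Next I would change the metric rather than the current, viewing $\widetilde S_r$ as an integral current on $(G,\tfrac1r \opna d_g)$. Since mass of an $(m+1)$-current rescales by $r^{-(m+1)}$ and mass of an $m$-current by $r^{-m}$, one obtains the $r$-uniform bounds
\[
\boldsymbol M_{d_g/r}(\widetilde S_r)\;\le\; C V^{(m+1)/m},\qquad \boldsymbol M_{d_g/r}(\partial \widetilde S_r)\;\le\; V,
\]
while the $(d_g/r)$-diameter of $\mathrm{supp}(\partial \widetilde S_r)=s_r(\mathrm{supp}(T))$ is bounded by $\mathrm{diam}_{d_c}(\mathrm{supp}(T))$, independently of $r$. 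Using that $(G,\tfrac{1}{r}\opna d_g)\to G_\infty$ in pointed Gromov--Hausdorff, I would invoke Wenger's compactness and closure theorems for integral currents under convergence of the ambient metric space to extract a subsequential limit $S\in\boldsymbol I_{m+1}^{cpt}(G_\infty)$ of the $\widetilde S_r$ with $\boldsymbol M(S)\le C V^{(m+1)/m}$. Under the natural correspondence of $\iota_*T_r\in(G,\tfrac1r \opna d_g)$ with $T\in G_\infty$---legitimate because $(s_{1/r})_*T_r=T$ and $\tfrac1r \opna d_g\to \opna d_c$ uniformly on bounded sets---the boundary passes to the limit and one obtains $\partial S=T$, which is the required Euclidean filling.

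The main obstacle is precisely this final limit step: one must appeal to Wenger's extension to metric spaces of the Ambrosio--Kirchheim compactness and boundary-closure theorems, and carefully identify the limit boundary with $T$. The rescaling trick together with the $1$-Lipschitz map $\iota$ do all the bookkeeping, but the genuine analytic content is hidden in this compactness principle.
\end{proofof}
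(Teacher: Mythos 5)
The paper does not reproduce a proof of this proposition; it cites {[Wenger11, Proposition 3.6]} and merely remarks that Wenger's argument only uses the isoperimetric hypothesis on compactly supported cycles. Your blow-up / fill / rescale / limit scheme is indeed the mechanism behind Wenger's proof, and the bookkeeping with $s_r$, the $1$-Lipschitz map $\iota$, and the metric rescaling $d_g/r$ is correct as far as it goes.

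There is, however, a genuine gap at the point where you invoke Wenger's Gromov--Hausdorff compactness theorem for integral currents. That theorem requires, besides the uniform bound on $\boldsymbol N(\widetilde S_r)=\boldsymbol M(\widetilde S_r)+\boldsymbol M(\partial\widetilde S_r)$, a \emph{uniform bound on} $\opna{diam}_{d_g/r}\bigl(\opna{spt}\widetilde S_r\bigr)$. You bound only the diameter of the boundary's support $\opna{spt}(\partial \widetilde S_r)=s_r(\opna{spt}(T))$, which is indeed uniformly bounded in $\tfrac1r d_g$; but the filling $\widetilde S_r$ handed to you by the isoperimetric hypothesis carries no diameter control whatsoever: a current of bounded mass in the noncompact space $(G,d_g)$ can have arbitrarily large support. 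Without a uniform diameter bound on $\opna{spt}\widetilde S_r$, the sequence need not subconverge, or the limit may lose mass and boundary to infinity, so that $\partial S=T$ fails. Closing this gap is the real analytic content of Wenger's argument: one first shows that a space satisfying a Euclidean isoperimetric inequality admits fillings of \emph{controlled diameter}, supported in a ball of radius comparable to $\boldsymbol M(T)^{1/m}$ around $\opna{spt}(T)$, by a slicing and truncation argument based on the coarea inequality applied to the distance function from $\opna{spt}(T)$. That step must precede the appeal to compactness; once it is in place, the identification of $\partial S$ with $T$ in $G_\infty$ goes through essentially as you sketched.
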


The original proposition in \cite{Wenger11} presumes an Euclidean isoperimetric inequality for $\boldsymbol I_m(G)$. But an examination of the proof yields, that only an Euclidean isoperimetric inequality for $\boldsymbol I_m^{cpt}(G)$ is needed.\\
\quad\\
Further we need the following proposition:
\begin{prop}\label{FI}
Let $G$ be a Lie group equipped with a left-invariant Riemannian metric. Let $m\in \NN$ and $\delta \ge 1$. 
If $F_G^{m+1} \preccurlyeq l^\delta$, then $G$ satisfies an isoperimetric inequality of rank $\delta$ for $\boldsymbol I_m^{cpt}(G)$.
\end{prop}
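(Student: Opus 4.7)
The plan is to transfer the Lipschitz chain filling bound to integral currents via the natural pushforward map. Every Lipschitz $m$-chain $a=\sum_j z_j\alpha_j$ induces an integral current $[a] \in \boldsymbol I^{cpt}_m(G)$ by pushing forward the standard integration current on $\Delta^m$ along each $\alpha_j$; one checks $\partial[a]=[\partial a]$ and $\boldsymbol M([a])\le \mass(a)$. Hence a Lipschitz filling of a cycle $a$ automatically yields an integral filling of $[a]$ with $\boldsymbol M$-mass bounded by the Lipschitz mass. The task then reduces to approximating a given compactly supported integral $m$-cycle $T$ by a Lipschitz $m$-cycle of comparable mass, modulo a small, cheap-to-fill error.

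For the approximation I would invoke the Federer-Fleming deformation theorem, which applies since $G$ is a Riemannian manifold. Fix $\varepsilon>0$, choose a smooth triangulation of a neighbourhood of $\opna{spt}(T)$ of mesh $\eta$, and deform $T$ onto the $m$-skeleton. This produces a polyhedral (hence Lipschitz) $m$-cycle $P$ with $\partial P=0$ and $\mass(P)\le (1+\varepsilon)\boldsymbol M(T)$, together with an integral $(m+1)$-current $R\in \boldsymbol I^{cpt}_{m+1}(G)$ satisfying $T-[P]=\partial R$ and $\boldsymbol M(R)\le c\,\eta\,\boldsymbol M(T)$, where $c$ depends only on the local geometry of $G$ (left-invariance of the metric keeping this bound uniform across translates).

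Applying the hypothesis $F^{m+1}_G(l)\preccurlyeq l^\delta$ to $P$, one obtains a Lipschitz $(m+1)$-chain $b$ with $\partial b=P$ and $\mass(b)\le A\mass(P)^\delta +A\mass(P)+A$ for some constant $A>0$. Setting $S\defeq [b]+R$, one has $\partial S = [P]+\partial R=T$ and
$$\boldsymbol M(S)\le \mass(b)+\boldsymbol M(R) \le A(1+\varepsilon)^\delta \boldsymbol M(T)^\delta + A(1+\varepsilon)\boldsymbol M(T)+A+c\,\eta\,\boldsymbol M(T).$$
For $\boldsymbol M(T)$ bounded below the lower-order terms are absorbed into an enlarged constant, while the complementary small-mass case is dispatched by a direct coning argument in a normal chart of $G$, which already furnishes a Euclidean isoperimetric inequality locally; together these give $\boldsymbol M(S)\le C\cdot \boldsymbol M(T)^\delta$ for a uniform $C$.

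The main obstacle is the deformation step: one must invoke Federer-Fleming on a Riemannian manifold and keep the constants depending only on local geometry (and not on the diameter of $\opna{spt}(T)$), as well as ensure that the remainder $R$ can be chosen compactly supported. These facts are classical but technically delicate; the rest of the argument, namely combining $[b]$ with the error current $R$ and reading off the mass bound, is routine bookkeeping.
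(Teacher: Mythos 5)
Your argument is sound and follows the same high-level strategy as the paper (approximate $T$ by a Lipschitz cycle, fill that, add back the approximation error), but the technical route differs. The paper first Nash-embeds $G$ isometrically in $\RR^N$, invokes Lemma 5.7 of Federer--Fleming to produce an integral current $S$ with $\boldsymbol N(S)\le\eta$ arbitrarily small such that $T-\partial S$ \emph{is} a Lipschitz chain, and then retracts everything back to $G$ using that an embedded Riemannian manifold is a local Lipschitz neighbourhood retract (subdividing so the retraction is Lipschitz on each piece). This bypasses the need for a deformation theorem on the ambient manifold, and the correction term $S$ has genuinely tiny mass, independent of $\boldsymbol M(T)$. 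Your version instead uses the full deformation theorem intrinsically on $G$, which is plausible but requires extra work you only gesture at: the classical theorem is stated in $\RR^N$, and porting it to a noncompact Riemannian manifold with constants depending only on local geometry needs a uniformly bi-Lipschitz mesh, which the left-invariance helps with but does not trivially provide. Two smaller points: (i) your mass bound $\mass(P)\le(1+\varepsilon)\boldsymbol M(T)$ is more than the deformation theorem delivers --- it gives $\boldsymbol M(P)\le c\,\boldsymbol M(T)$ for a dimensional constant $c>1$, though this is harmless since $c$ just enlarges the final constant; (ii) your handling of the small-mass regime via coning is no worse than the paper's own, which also only arrives at a $\preccurlyeq$ bound and does not explicitly address the additive constant, a gap that is moot in the paper because the proposition is ultimately invoked only for the Euclidean exponent $\delta=\tfrac{m+1}{m}$.
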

\begin{proof}
Let $T \in \boldsymbol I_m^{cpt}(G)$ with $T \ne 0$ and $\partial T=0$. Note, that in particular $\partial T$ is associated to a Lipschitz chain, i.e. $\partial T= a_\#$ for the Lipschitz chain $a=0$. Now embed $G$ isometrically in some $\RR^N$  and look at $T$ and $\partial T$ from now on as integral currents of $\RR^N$.\\
Let $\eta >0$ be arbitrary small. With \cite[Lemma 5.7]{FF60} we get:\\
There is an integral current $ S \in \boldsymbol I_{m+1}(\RR^N)$, such that 					\vspace*{-3mm}
\begin{enumerate}[i)]
\item $T-\partial S $ is a Lipschitz chain,											\vspace*{-3mm}
\item $\boldsymbol N(S) = \boldsymbol M(S)+\boldsymbol M(\partial S)\le \eta$ \quad and		\vspace*{-3mm}
\item $\opna{spt}(S) \subset U_\eta(\opna{spt}(T))$.									\vspace*{-3mm}
\end{enumerate}
As $G$ is an isometrically embedded Riemannian manifold, it is a local Lipschitz neighbourhood retract. This means, there is a neighbourhood $U$ of $G$ in $\RR^N$ and a locally Lipschitz map
$$\varphi: U \to G \ \text{ with } \ \varphi(g)=g \quad \forall g \in G.$$
Now let $\eta >0$ be sufficiently small, such that $\opna{spt}(S) \subset U$. Then the map $\bar \varphi \defeq \varphi_{|\opna{spt}(S)}:\opna{spt}(S)\to G$ \ is locally Lipschitz.\\\quad\\
For $x\in \opna{spt}(S)$ we denote by $U(x)$ the maximal neighbourhood of $x$ such that $\bar \varphi$ is Lipschitz on $U(x)$.\\
As $T-\partial S$ is associated a Lipschitz chain, it has compact support. Therefore there is a finite cover 
$$ \bigcup_{i=1}^d U(x_i) \quad \text{ with } \quad x_1,...,x_d \in \opna{spt}(T -\partial S)\ .$$
So one can subdivide $T-\partial S$ in finitely many smaller simplices (i.e. simplicial chains $\alpha_j: \Delta \to \opna{spt}(T-\partial S)$), such that each $\alpha_j(\Delta)$ is completely contained in one of the $U(X_i)$. \\
As now $\bar \varphi$ is Lipschitz on each $\alpha_j(\Delta)$, we obtain the Lipschitz chain
$$\varphi^*(T-\partial S)=\sum_{j=1}^l \bar \varphi\circ \alpha_j$$
on $G$.\\\quad\\
Further the support of $T$ is compact. Therefore the closure of the $2 \eta$-neighbourhood of $\opna{spt}(T)$ is compact. So we can divide $\opna{spt}(S)$ in finitely many Borel sets $A_v$, such that each of these sets is completely contained in one of the neighbourhoods $U(x) ,\,  x \in \opna{spt}(S)$. One can do this as $\RR^N$ is second-countable. Then, as $\bar \varphi$ is Lipschitz on each $A_v\subset U(x_v)$, we obtain the integral current
$$\varphi^*(S)= \sum_{v=1}^n \bar\varphi_\#(S\llcorner A_v)$$
on $G$.\\\quad\\
Further holds
$$\mass(\varphi^*(T-\partial S)) \sim \boldsymbol M(T)$$
as $\bar \varphi$ is Lipschitz on each simplex and $\boldsymbol M(\partial S) \le \eta$.\\
\quad\\
Now let $l:= \boldsymbol M(T)$ and let $b$ be a Lipschitz chain in $G$ with $\partial b=\varphi^*(T-\partial S)$ and $\mass(b) \preccurlyeq l^\delta$. Such a chain exists due to the condition on the filling function. So we get 
\begin{align*}
\partial (b_\# + \varphi^*(S))&=\partial b_\# + \partial \varphi^*(S)=\partial b_\# +  \varphi^*(\partial S)\\&= \varphi^*(T-\partial S)+\varphi^*(\partial S)=T-\varphi^*(\partial S)+\varphi^*(\partial S)\\&=T
\end{align*}
and
$$\boldsymbol M(b_\#+\varphi^*(S)) \preccurlyeq l^\delta$$
as $\bar \varphi$ is Lipschitz on each restriction and $\boldsymbol M(S) \le \eta$. 
\end{proof}

Now we are prepared for the proof of Theorem \ref{Thm4}. We start with the case, that condition \textit{a)} is fulfilled.
\begin{prop}\label{propa)}
Let $G$ be a stratified nilpotent Lie group equipped with a left-invariant Riemannian metric. Further let $\mf g$ be the Lie algebra of $G$  with grading $\mf g=V_1\oplus ... \oplus V_d$. Let $k_0,k_1\in \NN$, such that $(k_0+1)$ is the maximal dimension of an $\Omega$-regular, $\Omega$-isotropic subspace of $V_1$ and $(k_1+1)$ is the maximal dimension of an  $\Omega$-isotropic subspace of $V_1$. If there is an $k_0 \le k \le k_1$, such that there is an integral current $T\in \boldsymbol I^{cpt}_{k+1}(G,\opna d_c)$ with $\partial T=0$ and $T\ne 0$, but no integral current $S \in \boldsymbol I^{cpt}_{k+2}(G,\opna d_c)$ with $\partial S=T$, then holds
$$F^{k+2}_G(l) \succnsim l^\frac{k+2}{k+1} \ .$$
\end{prop}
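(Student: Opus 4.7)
The plan is to argue by contradiction, exploiting the topological obstruction in the Carnot--Carath\'eodory metric provided by condition \textit{a)}. Suppose, toward a contradiction, that $F^{k+2}_G(l) \preccurlyeq l^{\frac{k+2}{k+1}}$. My strategy is to push this Euclidean upper bound, via integral currents, from the Riemannian structure of $G$ down to the Carnot--Carath\'eodory structure $(G,\opna d_c)$, where it will produce a filling of $T$ that condition \textit{a)} forbids.

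First, I would apply Proposition \ref{FI} with $m = k+1$ and $\delta = \frac{k+2}{k+1}$. The assumed upper bound on $F^{k+2}_G$ translates into an isoperimetric inequality of rank $\frac{k+2}{k+1}$ for the space $\boldsymbol{I}^{cpt}_{k+1}(G)$ of compactly supported integral $(k+1)$-currents in the left-invariant Riemannian metric; since $\frac{(k+1)+1}{k+1} = \frac{k+2}{k+1}$, this is an Euclidean isoperimetric inequality in the sense required. Next, I would invoke Proposition \ref{II}, Wenger's transfer theorem, to deduce an Euclidean isoperimetric inequality for $\boldsymbol{I}^{cpt}_{k+1}(G_\infty)$, where $G_\infty = (G,\opna d_c)$.

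Finally, I would apply this Carnot-Carath\'eodory isoperimetric inequality to the current $T \in \boldsymbol{I}^{cpt}_{k+1}(G,\opna d_c)$ supplied by condition \textit{a)}. Since $\partial T = 0$ and $\boldsymbol{M}(T) < \infty$ (automatic for integral currents), there must exist a filling $S \in \boldsymbol{I}^{cpt}_{k+2}(G,\opna d_c)$ with $\partial S = T$ and $\boldsymbol{M}(S) \le C\,\boldsymbol{M}(T)^{\frac{k+2}{k+1}}$. This directly contradicts the hypothesis of condition \textit{a)} that no such compactly supported filling of $T$ exists in $(G,\opna d_c)$, and hence $F^{k+2}_G(l) \succnsim l^{\frac{k+2}{k+1}}$.

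The main obstacle is essentially bookkeeping rather than substantive: one must make sure the filling currents stay in the compactly supported class $\boldsymbol{I}^{cpt}_{k+2}$ on both sides of the transfer step, since condition \textit{a)} is phrased in terms of $\boldsymbol{I}^{cpt}$ and a non-compact filling would not yield a contradiction. This is precisely why Proposition \ref{II} is cited in its compactly-supported variant, and why the remark following it points out that an inspection of Wenger's proof is needed to verify that the hypothesis and conclusion both live in $\boldsymbol{I}^{cpt}_m$. Once this is in place, the argument is essentially a one-line application of the two preparatory propositions.
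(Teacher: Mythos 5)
Your argument is correct and matches the paper's own proof almost exactly: both rely on Proposition \ref{FI} and Proposition \ref{II} to transfer an assumed Euclidean isoperimetric inequality for compactly supported integral currents from $(G,\opna d_g)$ to $(G,\opna d_c)$, where the unfillable cycle $T$ yields a contradiction. The only difference is cosmetic — you phrase it as a contradiction while the paper chains the contrapositives — and your remark about tracking the compactly supported class is exactly the point the paper flags in its remark after Proposition \ref{II}.
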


\begin{proof}
So there is an $k_0 \le k \le k_1$ such that there is an integral current $T\in \boldsymbol I^{cpt}_{k+1}(G,\opna d_c)$ with $\partial T=0$ and $T\ne 0$ but no integral current $S \in \boldsymbol I^{cpt}_{k+2}(G,\opna d_c)$ with $\partial S=T$. This means, that $(G,\opna d_c)$ doesn't satisfy an isoperimetric inequality of rank $\delta$ for $\boldsymbol I^{cpt}_{k+1}(G,\opna d_c)$ for any $\delta < \infty$, and in particular no Euclidean isoperimetric inequality for $\boldsymbol I^{cpt}_{k+1}(G,\opna d_c)$. By Proposition \ref{II}, $G$ doesn't satisfy an Euclidean isoperimetric inequality  for $\boldsymbol I_{k+1}(G)$ and by Proposition \ref{FI} we have 
$$F^{k+2}_G(l) \succnsim l^\frac{k+2}{k+1}$$
as desired.
\end{proof}

We finish the proof of Theorem \ref{Thm4} by showing, that if condition \textit{b)} is fulfilled this implies that condition \textit{a)} is satisfied.

\begin{lem}\label{lemb)}
Let $G$ be a stratified nilpotent Lie group equipped with a left-invariant Riemannian metric. Further let $\mf g$ be the Lie algebra of $G$  with grading $\mf g=V_1\oplus ... \oplus V_d$. Let $k_0,k_1\in \NN$, such that $(k_0+1)$ is the maximal dimension of an $\Omega$-regular, $\Omega$-isotropic subspace of $V_1$ and $(k_1+1)$ is the maximal dimension of an  $\Omega$-isotropic subspace of $V_1$. 
If the two numbers $k_0$ and $k_1$ coincide, then there is for $k \defeq k_0=k_1$ an integral current $T\in \boldsymbol I^{cpt}_{k+1}(G,\opna d_c)$ with $\partial T=0$ and $T\ne 0$ but no integral current $S \in \boldsymbol I^{cpt}_{k+2}(G,\opna d_c)$ with $\partial S=T$.
\end{lem}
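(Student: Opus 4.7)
My plan is to verify condition \textit{a)} of Theorem \ref{Thm4} by exhibiting a $(k+1)$-dimensional integral cycle in $(G,\opna d_c)$ whose non-fillability follows from a vanishing result for \emph{all} $(k+2)$-dimensional integral currents in $(G,\opna d_c)$. The argument therefore splits into two independent steps; throughout I may assume $G$ is non-abelian, because if $\mf g=V_1$ then $V_1$ itself is $\Omega$-isotropic, forcing $k+1=\dim V_1=\dim G$, and the constancy theorem rules out non-zero compactly supported $(k+1)$-cycles in $\RR^{\dim G}$, making the conclusion vacuous.

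First I would show $\boldsymbol I^{cpt}_{k+2}(G,\opna d_c)=0$. Let $\phi\colon A\to (G,\opna d_c)$ be a Lipschitz map from a Borel set $A\subset \RR^{k+2}$. By Pansu's differentiability theorem, $\phi$ admits almost everywhere a Pansu differential $D_P\phi(x)$ which is a Carnot-group homomorphism from the abelian $\RR^{k+2}$ into $G$. Its image is therefore an abelian horizontal subgroup of $G$, whose Lie algebra is an $\Omega$-isotropic subspace of $V_1$; by the hypothesis $k_1=k$ this subspace has dimension at most $k+1<k+2$. Consequently the rank of $D_P\phi(x)$ is strictly less than $k+2$, so the $(k+2)$-dimensional $\opna d_c$-Jacobian of $\phi$ vanishes almost everywhere. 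Since every element of $\boldsymbol I^{cpt}_{k+2}(G,\opna d_c)$ is, in the Ambrosio--Kirchheim representation, a countable sum of Lipschitz pushforwards of this type, every such current has mass zero and hence is zero.

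Next I would produce a non-zero $T\in \boldsymbol I^{cpt}_{k+1}(G,\opna d_c)$ with $\partial T=0$. Non-abelianness yields $\dim V_1\ge k+2$, and the hypothesis $k_0=k$ gives a $(k+1)$-dimensional $\Omega$-regular, $\Omega$-isotropic subspace $S\subset V_1$. Starting from a smooth immersion $\sigma\colon S^{k+1}\to G$ (which exists because $\dim G\ge \dim V_1+1\ge k+3$) and a fibrewise injective bundle map $TS^{k+1}\hookrightarrow \mc H$ (which exists because $TS^{k+1}$ is stably trivial of rank $k+1$ while $\mathrm{rk}(\mc H)=\dim V_1\ge k+2$), I would apply the h-principle machinery of Section \ref{S4} in its immersion form --- available thanks to the microflexibility coming from the $\Omega$-regularity of $S$ and the Holonomic Approximation Theorem already used in the proof of Proposition \ref{topup} --- to approximate $\sigma$ by a piecewise smooth, piecewise horizontal immersion $\psi\colon S^{k+1}\to G$. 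Being horizontal, $\psi$ is Lipschitz with respect to $\opna d_c$; being an immersion, its $\opna d_c$-Jacobian is positive almost everywhere. Thus $T\defeq \psi_\#[S^{k+1}]$ lies in $\boldsymbol I^{cpt}_{k+1}(G,\opna d_c)$, has strictly positive mass, and satisfies $\partial T=\psi_\#\partial[S^{k+1}]=0$.

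Combining the two halves, the non-zero cycle $T$ cannot be the boundary of any $S\in \boldsymbol I^{cpt}_{k+2}(G,\opna d_c)$, simply because the only element of that space is the zero current, whose boundary is $0\ne T$. This is exactly condition \textit{a)} of Theorem \ref{Thm4} for $k=k_0=k_1$. The main obstacle I expect is the second step: Proposition \ref{ThmGromov} as stated only guarantees continuous, piecewise horizontal approximations which need not be immersions and whose $\opna d_c$-Jacobian could vanish, so the argument really depends on upgrading the h-principle machinery of Section \ref{S4} to the level of horizontal \emph{immersions} of a closed manifold, or equivalently on directly producing one closed horizontal $(k+1)$-submanifold of $G$ with positive $\opna d_c$-mass.
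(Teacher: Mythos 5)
Your plan is the right one in outline — kill all compactly supported $(k+2)$-currents, then exhibit one non-zero compactly supported $(k+1)$-cycle — and this is exactly how the paper argues, but the two halves are carried out differently.

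For the first half, you re-derive the vanishing of $\boldsymbol I^{cpt}_{k+2}(G,\opna d_c)$ from scratch via Pansu's differentiability theorem: the Pansu differential of a Lipschitz map $\RR^{k+2}\supset A\to G$ is a homogeneous homomorphism, its Lie-algebra image is an abelian subalgebra of $V_1$, hence $\Omega$-isotropic of dimension $\le k_1+1=k+1<k+2$, so the $(k+2)$-Jacobian vanishes a.e., giving pure $\mc H^{k+2}$-unrectifiability and hence triviality of $\boldsymbol I^{cpt}_{k+2}$. This reasoning is sound and is in substance the argument behind the result the paper instead cites as \cite[Theorem 1.1]{Magnani}. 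So you get credit for giving a self-contained proof where the paper delegates, but you should be aware that this is a known theorem and citing it is the cleaner move.

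For the second half you diverge more substantially, and this is where the real issue sits. You want to push forward the fundamental class of $S^{k+1}$ under a \emph{horizontal immersion}, and you correctly flag at the end that Corollary~\ref{LemGr2}/Proposition~\ref{ThmGromov} only guarantees continuous, piecewise smooth, piecewise horizontal approximations, which need not be immersive, so your construction as written does not close. The paper circumvents this entirely: it takes the $(k+1)$-horizontal approximation of a triangulation of $G$ already constructed in the proof of Theorem~\ref{Thm1} (via the folded-immersion Proposition~\ref{approx}, whose output is by definition a homeomorphism onto a smooth submanifold on each piece of the cover, so no degeneracy), and defines $T$ as the pushforward of the boundary $\partial\Delta^{(k+2)}$ of a $(k+2)$-simplex. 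This is automatically a compactly supported $(k+1)$-cycle, and the folded-immersion structure together with closeness to the original homeomorphic triangulation is what ensures the resulting current is non-zero. In short: your step (2) is the right idea but relies on an immersive h-principle statement that the paper never proves; the paper's own construction avoids the sphere altogether and extracts the non-trivial cycle from the horizontal triangulation, whose non-degeneracy is built into Proposition~\ref{approx}. I would also note that in your argument positivity of the $\opna d_c$-Jacobian a.e. does not by itself give positive mass of $\psi_\#[S^{k+1}]$ (there can be cancellations for a mere immersion), which is a second, smaller gap; taking $\partial\Delta^{(k+2)}$ close to the identity-like map $f$ sidesteps this because the faces land in essentially disjoint sets.
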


\begin{proof}
Suppose $k+1$ is the maximal dimension of $\Omega$-regular, $\Omega$-isotropic subspaces of $V_1$. Using the $h$-principle (as in the proof of Theorem \ref{Thm1}), we can construct an $(k+1)$-horizontal triangulation of $G$. The boundary of any $(k+2)$-simplex $\Delta^{(k+2)}$ forms an horizontal Lipschitz $(k+1)$-chain $a=\partial \Delta^{(k+2)} \ne 0$ with $\partial a=0$. Viewed as current, this gives us the integral current $T:= a_\# \in \boldsymbol I^{cpt}_{k+1}(G,\opna d_c)$. 
As $k+1$ is the maximal dimension of  $\Omega$-isotropic subspaces of $V_1$ too, we have by \cite[Theorem 1.1]{Magnani}, that $(G,\opna d_c)$ is purely $\mc H^{k+2}$-unrectifiable and so there are no non-trivial integral $(k+2)$-dimensional currents on $(G,\opna d_c)$. So condition \textit{a)} holds.
\end{proof}

The combination of Proposition \ref{propa)} and Lemma \ref{lemb)} proves Theorem \ref{Thm4}.

%
%------------------------------------------
%

\subsection{Geometrical Interpretation}
\vspace*{-5mm}
All of our theorems have conditions concerning the existence of $\Omega$-regular, $\Omega$-isotropic subspaces in the first layer of the grading of Lie algebra. The proofs used this algebraic conditions in a more or less technical manner. But it is interesting what the geometrical meaning of these subspaces is. Furthermore, one can explain geometrically the change of the behaviour of the filling invariants at the maximal dimension of such subspaces.

A good grasp of the meaning of the maximal dimension of an $\Omega$-regular, $\Omega$-isotropic subspace $S\subset V_1$ one can get from the view point of differential geometry. More explicitly, one has to look at the sectional curvature. For the sectional curvature of a Lie group equipped with a left-invariant Riemannian metric, there is the following formula (see \cite{Milnor}):

Let $G$ be a Lie group with Lie algebra $\mf g$ and let $\{e_1,...,e_n\}$ be an orthonormal basis of $\mf g$. Define the numbers $\alpha_{uvw}$ by
$$[e_u,e_v]= \sum_{w=1}^n \alpha_{uvw} e_w\ .$$
Then holds for the sectional curvature $K$:
\begin{align*}
K(e_i,e_j)= \sum_{k=1}^n \Big(\frac{1}{2} &\alpha_{ijk}(- \alpha_{ijk}+\alpha_{jki}+\alpha_{kij})\\
&- \frac{1}{4}(\alpha_{ijk}-\alpha_{jki}+\alpha_{kij})(\alpha_{ijk}+\alpha_{jki}-\alpha_{kij})-\alpha_{kii}\alpha_{kjj}\Big)
\end{align*}

Now consider $G$ as $2$-step nilpotent with the grading
$$\mf g = V_1 \oplus [V_1,V_1]$$
of its Lie algebra. Further let $\{e_1,...,e_{n_1}\}$ be an orthonormal basis of $V_1$ and $\{e_{n_1+1},...,e_{n}\}$ an orthonormal basis of $V_2:=[V_1,V_1]$. As $G$ is $2$-step nilpotent, one gets
$$[e_u,e_v]=0\quad \text{if \ $u \ge n_1+ 1$ \ or \ $v\ge n_1+ 1$}$$
and $[e_u,e_v]\in V_2$ for all $v,u \in \{1,...,n\}$. Therefore $\alpha_{uvw}=0$, whenever $u \notin \{1,...,n_1\}$ or $v \notin \{1,...,n_1\}$ or $w\le n_1$. 

For the sectional curvature in the case $i,j \le n_1$ follows:
\begin{align*}
K(e_i,e_j)&= \sum_{k=n_1+1}^n \Big( \frac{1}{2} \alpha_{ijk}(-\alpha_{ijk}+0+0)-\frac{1}{4}(\alpha_{ijk}-0+0)(\alpha_{ijk}+0-0)-0 \cdot 0 \Big)\\
&=\sum_{k=n_1+1}^n \Big( -\frac{1}{2}(\alpha_{ijk})^2-\frac{1}{4}(\alpha_{ijk})^2 \Big)\\
&=-\frac{3}{4}\sum_{k=n_1+1}^n (\alpha_{ijk})^2 \\
&=:K_{1,1}
\end{align*}
And in the case $i\le n_1$ and $j \ge n_1+1$:
\begin{align*}
K(e_i,e_j)&= \sum_{k=1}^{n_1} \Big( 0(-0+0+\alpha_{kij})-\frac{1}{4}(0-0+\alpha_{kij})(0+0-\alpha_{kij})-0 \cdot 0 \Big)\\
&=\sum_{k=1}^{n_1} \frac{1}{4}(\alpha_{kij})^2 \\
&=\frac{1}{4}\sum_{k=1}^{n_1} (\alpha_{kij})^2\\
&=:K_{1,2}
\end{align*}
In the case, that both vectors are from the basis of $V_2$, the sectional curvature equals $0$.\\
\quad\\
One can see, that $K_{1,1}\le 0$ with equality if and only if $[e_i,e_j]=0$; and $K_{1,2} \ge 0$ with equality if and only if for all $k\in\{1,...,n_1\}$ holds: $\pi_{e_j}([e_k,e_i])=0$, where $\pi_{e_j}$ denotes the projection on the subspace $\langle e_j \rangle$.

Let $S\subset V_1$ be an $\Omega$-isotropic, $\Omega$-regular subspace of maximal dimension, say of dimension $m$, and let the basis $\{e_1,...,e_n\}$ be chosen in way, such that $S=\langle e_1,...,e_m \rangle$. Then one gets:

\begin{enumerate}[1)]
\item $K(e_i,e_j)=0$ if $i,j \le m.$

\item For all $e_j$ with $m+1 \le j \le n_1$ there exists an $i \in \{1,...,m\}$, such that $K(e_i,e_j) < 0$.

\item For all $e_j$ with $j \ge n_1+1$ there exists an $i \in \{1,...,m\}$, such that $K(e_i,e_j) > 0$.
\end{enumerate}
The first property comes by the $\Omega$-isotropy, the second by the maximality of the dimension and the third by the $\Omega$-regularity.

This shows, that every plane in $S$ has sectional curvature $=0$. But whenever one extends $S$ by another direction, one gets a plane with sectional curvature $\ne 0$.

So one can explain the Euclidean behaviour of the filling invariants up to the maximal dimension of $\Omega$-regular, $\Omega$-isotropic subspaces by the flatness of these subspaces. The super-Euclidean behaviour in the dimension above is related to the positive curvature, which occurs whenever one adds a direction not contained in the first layer of the grading. 

Another way to see the necessity of the $\Omega$-regularity is the following example:

For $n\ge 4$ the group  $N_n$ of unipotent upper triangular $(n\times n)$-matrices the $2$-dimensional filling function fulfils 
$$F_{N_n}^2(l) \succcurlyeq l^3 \nsim l^\frac{1+1}{1}$$
which is a strictly super-Euclidean behaviour (see  \cite{Burillo}). 
But this is no contradiction to \mbox{Theorem \ref{Thm1}}, as the first layer of the grading $\mf n_n=V_1 \oplus ... \oplus V_{n-1}$ has dimension $n$ and the dimension of $\mf n_n$ is $\frac{n(n-1)}{2}$. \\
Therefore 
$$\dim V_1 - m = n-m \ge m(\frac{n^2-3n}{2}) =m(\frac{n(n-1)}{2}-n)=m(\dim \mf n_n - \dim V_1)$$
holds never true for $m\ge 2$. By the discussion after Theorem \ref{Thm1}, there can't exist a $2$-dimensional $\Omega$-isotropic, $\Omega$-regular subspace $S$ of $V_1$ and therefore  \mbox{Theorem \ref{Thm1}} doesn't apply.\\
On the other hand, there is a $\lfloor \frac{n}{2} \rfloor$-dimensional $\Omega$-isotropic subspace of $V_1$, generated by the matrices $E_{2k-1,2k}=(e_{i,j})$, $1\le k \le \lfloor \frac{n}{2} \rfloor$, with only non-zero entry $e_{2k-1,2k}=1$.\\
This shows, as $\lfloor \frac{n}{2} \rfloor \ge 2$ for $n\ge 4$, that the condition of the $\Omega$-regularity is of crucial importance for the Euclidean behaviour of the filling functions.

%
%------------------------------------------------------------------------------------------------------------------------------------------------------------------------------------
%

\section{Higher divergence functions of stratified nilpotent Lie groups}\label{SectionHDF}
\vspace*{-5mm}
Our results for the filling functions of stratified nilpotent Lie groups lead directly to lower bounds for the higher divergence functions. This is, as we will see in the proof, mainly due to the left-invariance of the Riemannian metric. 

\vspace*{-5mm}
\subsection{Results}
\vspace*{-5mm}
In the low dimensions we obtain the following theorem:

\begin{Thm}[{Divergence functions in low dimensions}]\label{Thm5}
Let $G$ be a stratified nilpotent Lie group equipped with a left-invariant Riemannian metric. Further let $\mf g$ be the Lie algebra of $G$  and $V_1$ be the first layer of the grading and let $k \in \mathbb N$. If there exists a  lattice $\Gamma\subset G$ with $s_2(\Gamma)\subset\Gamma$ and  a $(k+1)$-dimensional $\Omega$-isotropic, $\Omega$-regular subspace $S\subset V_1$, then holds:
$$\opna{Div}_G^j(r) \succcurlyeq r^{j+1} \quad \text{for all } j\le k.$$
\end{Thm}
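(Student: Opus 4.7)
The plan is to transfer the lower bound on the filling function $F^{j+1}_G$ established in Proposition \ref{Propeta} to the higher divergence function, by pushing the Burillo test cycle far from the basepoint via left translation. Left-invariance of the Riemannian metric preserves mass, and the distinguished closed $G$-invariant form used in Burillo's argument is insensitive to translation, so the obstruction to cheap fillings survives out at infinity.

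Concretely, for each $j \le k$ I would recall from the proof of Proposition \ref{Propeta} the data: a basis $X_1,\dots,X_{k+1}$ of the $\Omega$-isotropic, $\Omega$-regular subspace $S\subset V_1$, an integral submanifold $M$ to $S_j=\langle X_1,\dots,X_{j+1}\rangle$ through $e$, the horizontal $\varepsilon$-ball $b = B^M_\varepsilon(e)$, and the closed $G$-invariant $(j+1)$-form $\gamma = X_1^*\wedge\cdots\wedge X_{j+1}^*$ with $\int_b\gamma>0$, $s_t^*\gamma = t^{j+1}\gamma$, and $\mass(s_t(\partial b)) = t^j\mass(\partial b)$. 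Fix a constant $c\in(0,1]$ with $c^j\mass(\partial b) \le \alpha$ (permissible as the asymptotic statement is monotone in $\alpha$). For $r$ large, choose $g_r\in G$ with $d_g(g_r,\mf o) > (1+c\varepsilon)r + 1$, and set
$$
a_r \defeq L_{g_r}\bigl(s_{cr}(\partial b)\bigr).
$$
Then $a_r$ is a $j$-cycle: it is $r$-avoidant because its image lies in a Riemannian $cr\varepsilon$-ball about $g_r$, and by left-invariance and Corollary \ref{masslem} it has mass $c^j r^j\mass(\partial b) \le \alpha r^j$.

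The main step is to show that every filling $B$ of $a_r$ (whether or not it is $\rho r$-avoidant) satisfies $\mass(B) \succcurlyeq r^{j+1}$. Given $B$, set $B' \defeq s_{1/(cr)}\bigl(L_{g_r^{-1}}(B)\bigr)$, so that $\partial B' = \partial b$. Since $G$ is contractible and $\gamma$ is closed, the cycle $B'-b$ bounds and Stokes yields $\int_{B'}\gamma = \int_b\gamma > 0$; the comass inequality $|\int_{B'}\gamma| \le \lVert\gamma\rVert_\infty\mass(B')$ then forces $\mass(B') \ge c_0$ for an absolute constant $c_0>0$. Left-invariance of mass gives $\mass(L_{g_r^{-1}}(B)) = \mass(B)$, and a layerwise computation in $\Lambda^{j+1}\mf g$ shows that for $t \ge 1$ and any, not necessarily horizontal, $(j+1)$-chain $Y$ one has $\mass(s_t(Y)) \ge t^{j+1}\mass(Y)$: with a metric fitting the grading, the summands $V_{i_1}\wedge\cdots\wedge V_{i_{j+1}}$ of $\Lambda^{j+1}\mf g$ are pairwise orthogonal, and $s_t$ scales the $(i_1,\dots,i_{j+1})$-summand by $t^{i_1+\cdots+i_{j+1}} \ge t^{j+1}$. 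Applied with $t=cr$ and $Y=B'$ this gives $\mass(B) \ge (cr)^{j+1}\mass(B') \ge c_0 c^{j+1}r^{j+1}$.

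Because this bound holds for every filling of $a_r$, it applies in particular to every $\rho r$-avoidant filling, so $\opna{div}^j_{\rho,\alpha}(r) \succcurlyeq r^{j+1}$ with constants independent of $\rho$, which on the level of 2-parameter $j$-families reads $\opna{Div}^j_G \succcurlyeq r^{j+1}$ in the sense of the equivalence relation of Section \ref{SectionHDF}. The main obstacle I anticipate is justifying the one-sided scaling inequality $\mass(s_t(Y)) \ge t^{j+1}\mass(Y)$ on non-horizontal $(j+1)$-chains: Corollary \ref{masslem} gives only equality for the horizontal case, and here it is essential that the $V_i$ are mutually orthogonal, which is why we work with a metric fitting to the grading.
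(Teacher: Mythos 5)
Your proposal is correct, and the core strategy is the same as the paper's: left-translate a hard-to-fill cycle out of the ball, using left-invariance of the metric to preserve both the mass of the cycle and the lower bound on any filling. The paper packages this into a general reduction, Proposition~\ref{Divlow} (a lower bound on $F^{m+1}_G$ implies a lower bound $h(r^m)$ on $\opna{Div}^m_G$), and then simply cites the filling lower bound $F^{j+1}_G(l)\succcurlyeq l^{(j+1)/j}$ from Proposition~\ref{Propeta}. You instead inline the Burillo construction and redo the mass estimate directly on the translated, scaled cycle. Both work, but note that your auxiliary lemma $\mass(s_t(Y))\ge t^{j+1}\mass(Y)$ for non-horizontal $(j+1)$-chains and $t\ge 1$ --- which you correctly prove via the orthogonal $\hat s_t$-eigenspace decomposition of $\Lambda^{j+1}\mathfrak g$ with eigenvalues $t^{i_1+\cdots+i_{j+1}}\ge t^{j+1}$ under a metric fitting the grading --- is not actually needed. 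Applying Stokes directly to the filling $B$ of $a_r=L_{g_r}(s_{cr}(\partial b))$, using that $\gamma$ is $G$-invariant and $s_t^*\gamma=t^{j+1}\gamma$, gives $\int_B\gamma=\int_{L_{g_r}(s_{cr}(b))}\gamma=\int_{s_{cr}(b)}\gamma=(cr)^{j+1}\int_b\gamma>0$, whence $\mass(B)\ge(cr)^{j+1}\int_b\gamma/\lVert\gamma\rVert_\infty$ with no rescaling of $B$ and no one-sided scaling inequality. This is precisely how Burillo's Theorem~\ref{Burillo} is used inside Proposition~\ref{Propeta}, so the cleaner route is exactly the paper's: feed the output of Proposition~\ref{Propeta} into Proposition~\ref{Divlow}. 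What the modular route buys you, besides economy, is that Proposition~\ref{Divlow} then also gives for free all the other lower bounds on higher divergence needed in Theorems~\ref{Thm6} and~\ref{Thm7}, which your inlined construction would have to repeat for each case.
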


In the high dimensions we obtain additional upper bounds for the higher divergence functions, which coincide with the lower bounds. The lower bounds come in the same way as in the low dimensions, while the upper bounds are possible to establish because the filling functions are sub-Euclidean (in contrast to the Euclidean filling functions in the low dimensions). \\
To establish the upper bounds on the higher divergence functions it is important to know the divergence dimension of the stratified nilpotent Lie group. Every simply connected nilpotent Lie group of dimension $n$ is polynomial Lipschitz equivalent to $\RR^n$ via the exponential map $\exp: \RR^n \cong \mf g \to G$. This means that the Lipschitz constants of $\exp$ and $\exp^{-1}$ on balls of radius $R$ grow at most polynomial in $R$. Using this, one can construct in $G$ ($\rho r$-avoidant) fillings with polynomial bounded mass of ($r$-avoidant) cycles from Euclidean ones (compare \cite[Section 5]{GGT}) and vice versa. Therefore $G$ and $\RR^n$ have the same divergence dimension: $\opna{divdim}(G)=n-2$.

\begin{Thm}[{Divergence functions in high dimensions}]\label{Thm6}
Let $G$ be an $n$-dimensional stratified nilpotent Lie group equipped with a left-invariant Riemannian metric. Further let $\mf g$ be the Lie algebra of $G$  with grading $\mf g=V_1\oplus ... \oplus V_d$. Denote by $D =  \displaystyle{\sum_{i=1}^d i \cdot \dim V_i}$ \  the Hausdorff-dimension of the asymptotic cone of $G$ and let $k\in \NN$. If there exists a  lattice $\Gamma\subset G$ with $s_2(\Gamma)\subset\Gamma$ and  a $(k+1)$-dimensional $\Omega$-regular, $\Omega$-isotropic subspace $S \subset V_1$, then holds:
$$\opna{Div}^{n-j}_G(r) \sim r^\frac{(D-j)(n-j-1)}{D -j-1} \quad \text{for all } 2 \le j \le k.$$
\end{Thm}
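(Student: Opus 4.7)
The proof splits into matching upper and lower bounds on the higher divergence function, following the same pattern as the proof of Theorem \ref{Thm2}. As remarked in the introduction to this subsection, the lower bound is inherited more or less directly from the filling-function machinery using left-invariance, whereas the upper bound is where the sub-Euclidean character of the filling function is genuinely exploited.

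For the lower bound, I would adapt the Burillo-form argument from Proposition \ref{toplow}, now set one dimension higher so the relevant cycle is $(n-j)$-dimensional rather than $(n-j-1)$-dimensional. Choose an orthonormal basis of $\mf g$ compatible with the grading $\mf g = V_1 \oplus \cdots \oplus V_d$ and with the $\Omega$-isotropic subspace $S$ (so that its first $k+1$ vectors span $S$); form the $(n-j+1)$-chain $b'$ as the exponential image of the unit cube in all coordinates except the first $j-1$ horizontal ones; and take $\gamma$ to be the wedge of the dual forms of the remaining basis vectors. Exactly as in Proposition \ref{toplow}, one checks that $\gamma$ is closed and $G$-invariant, and that $\mass(s_t(\partial b')) \sim t^{D-j}$ while $s_t^\ast \gamma = t^{D-j+1} \gamma$. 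For a scale $t$, put $a_t \defeq L_g(s_t(\partial b'))$ for $g \in G$ with $\opna d_g(g,\mf o)$ slightly larger than $r + \opna{diam}(s_t(\partial b'))$, so that $a_t$ is an $r$-avoidant $(n-j)$-cycle whose mass, by left-invariance, is unchanged. Since $\gamma$ is $G$-invariant and closed, any Lipschitz filling $c$ of $a_t$, avoidant or not, satisfies $\mass(c) \succcurlyeq \int_c \gamma = \int_{s_t(b')} \gamma \sim t^{D-j+1}$. Choosing $t$ as large as the mass constraint $\mass(a_t) \le \alpha r^{n-j}$ permits produces the claimed power of $r$.

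For the upper bound, the goal is to construct a $\rho r$-avoidant $(n-j+1)$-chain $c$ filling a given $r$-avoidant $(n-j)$-cycle $a$ of mass at most $\alpha r^{n-j}$, whose total mass matches the lower bound. The plan is to reduce the filling problem to one lower dimension "on a sphere at radius $r$'', where the sub-Euclidean filling function $F^{n-j}_G(l) \sim l^{(D-j)/(D-j-1)}$ from Theorem \ref{Thm2} becomes applicable. Concretely, I would first homotope $a$ into an annular region around $\partial B_r(\mf o)$ by an $(n-j+1)$-chain $\tau$ whose boundary is $a - c_0$, with $c_0$ an $(n-j)$-cycle (resp.\ chain) near $\partial B_r$ of mass $\preccurlyeq r^{n-j-1}$; the polynomial Lipschitz equivalence $\exp : \mf g \to G$ can be used to import such an annular sweep-out from the Euclidean sphere. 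Then I would fill $c_0$ inside the annular region using the Young-type construction of Theorem \ref{ThmYoungII}: apply the $h$-principle from Proposition \ref{ThmGromov} on the quotient $G/s_2(\Gamma)$ to produce a horizontal filling template, and left-translate the construction so that its support lies within $\{x : \opna d_g(x,\mf o) \ge \rho r\}$ rather than near $\mf o$. The resulting filling of $c_0$ has mass $\preccurlyeq F^{n-j}_G(r^{n-j-1}) \sim r^{(D-j)(n-j-1)/(D-j-1)}$, which dominates the radial part $\mass(\tau)$.

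The main obstacle is step (ii) of the upper bound: executing Young's filling construction in an avoidant form. The construction underlying Theorem \ref{ThmYoungII} uses the scaling automorphism $s_2$ in an essential way and naively produces fillings that sweep through $\mf o$; keeping the output in $G \setminus B_{\rho r}(\mf o)$ while preserving the $s_2(\Gamma)$-equivariance and $m$-horizontality requires performing the equivariant approximation on a translate of the fundamental domain that sits at distance $\sim r$ from $\mf o$, and then estimating carefully the mass inflation caused by the fact that left-translation is not conformal with respect to the Carnot--Carathéodory structure. I expect this technical point, rather than either the Burillo bookkeeping or the sphere-decomposition, to carry the bulk of the work.
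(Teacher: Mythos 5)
Your lower bound is essentially the paper's: the paper packages the argument as Proposition \ref{Divlow} (any lower bound on $F^{m+1}_G$ yields a lower bound on $\opna{Div}^m_G$ by left-translating the hard-to-fill cycle out of the ball and noting that avoidance only restricts the competitor fillings), and you simply unwrap this with the explicit Burillo cube from Proposition \ref{toplow}. That part is fine, modulo the index bookkeeping.

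The upper bound is where your plan diverges from the paper and, as written, it does not close. The paper proves a general Proposition \ref{Divup}: if $F_M^{m+1}(l)\preccurlyeq l^\delta$ with $\delta<\tfrac{m+1}{m}$ then $\opna{Div}^m_M(r)\preccurlyeq r^{\delta m}$. The mechanism is Wenger's integral-current machinery (his Lemma 3.1 and Proposition 1.8 in \cite{Wenger06}): a strictly sub-Euclidean isoperimetric inequality forces optimal fillings to stay within bounded distance of the cycle, so a cycle supported outside $B_r$ has a filling of controlled mass supported outside $B_{r/2}$, and one then passes back and forth between Lipschitz chains and integral currents via Federer--Fleming approximation and a Lipschitz neighbourhood retract. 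Crucially, this \emph{never touches} the Young construction; the avoidance comes for free from the sub-Euclidean exponent. Your plan instead tries to (i) homotope the $r$-avoidant cycle $a$ to a cycle $c_0$ near the sphere $\partial B_r(\mf o)$ with mass $\preccurlyeq r^{n-j-1}$, and (ii) fill $c_0$ by an ``avoidant'' version of Theorem \ref{ThmYoungII}.

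Both of these steps have real gaps. In (i), the mass bound $\mass(c_0)\preccurlyeq r^{n-j-1}$ is unjustified: $a$ has mass up to $\alpha r^{n-j}$ and is an $(n-j)$-cycle, so pushing it onto an annulus around a sphere of radius $r$ gives, at best, mass of order $r^{n-j}$, one power too many. There is also a dimensional mismatch: you invoke $F^{n-j}_G$ to fill the $(n-j)$-cycle $c_0$, but $F^{n-j}_G$ governs fillings of $(n-j-1)$-cycles; you would need $F^{n-j+1}_G$, which changes the exponent. And (ii) --- adapting the $s_2(\Gamma)$-equivariant, $m$-horizontal filling template to remain $\rho r$-avoidant --- is, as you yourself observe, where the bulk of the work would lie; you identify the obstruction (the construction naturally sweeps through the basepoint and left-translation distorts the Carnot--Carathéodory geometry) but do not overcome it. That is precisely the difficulty Proposition \ref{Divup} is designed to bypass. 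Without Wenger's result, or a genuine substitute argument that pushes a filling away from $\mf o$, the upper bound is missing.

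(One side remark: your lower-bound computation yields $\opna{Div}^{n-j}_G(r)\succcurlyeq r^{(n-j)(D-j+1)/(D-j)}$, which after the re-indexing $j\mapsto j+1$ is the exponent appearing in Theorem \ref{Thm7} for $\opna{Div}^{n-j-1}_G$. The exponent and the superscript in the statement of Theorem \ref{Thm6} as printed are off by one relative to each other; this is an inconsistency in the paper's indexing, not an error in your lower bound.)
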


As we can reduce the conditions of Theorem \ref{Thm1} and Theorem \ref{Thm2} in the case of simply connected $2$-step nilpotent Lie groups (see Theorem \ref{Thm3}),
we can do the same for Theorem \ref{Thm5} and Theorem \ref{Thm6} and obtain:

\begin{Thm}[{Divergence functions of $2$-step nilpotent Lie groups}]\label{Thm7}
Let $G$ be an $n$-dimensional simply connected $2$-step nilpotent Lie group equipped with a left-invariant Riemannian metric. Further let $\mf g$ be the Lie algebra of $G$  with the grading $\mf g = V_1 \oplus V_2$, let $n_2= \dim V_2$ and let $k \in \mathbb N$. If there exists a   $(k+1)$-dimensional $\Omega$-regular, $\Omega$-isotropic subspace $S\subset V_1$,  then holds:
\begin{enumerate}[(i)]
\item $\opna{Div}_G^j(r) \succcurlyeq r^{j+1}\hspace{2.9cm} $ for all $j \le k$,

\item $\opna{Div}^{n-j-1}_G(r) \sim r^\frac{(n+n_2-j)(n-j-1)}{n+n_2 -j-1} \quad$  for all $2\le j \le k$.

\end{enumerate}
\end{Thm}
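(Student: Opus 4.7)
The plan is to derive Theorem \ref{Thm7} as a direct corollary of Theorems \ref{Thm5} and \ref{Thm6}, in exactly the same manner that Theorem \ref{Thm3} was extracted from Theorems \ref{Thm1} and \ref{Thm2}. Both general divergence theorems impose the extra hypothesis that $G$ contains a lattice $\Gamma$ with $s_2(\Gamma) \subset \Gamma$; the whole point of passing to the 2-step case is that this hypothesis can be removed, because Lemma \ref{lattice} already provides such a $\Gamma$ inside every simply connected 2-step nilpotent Lie group. So the first step is simply to invoke Lemma \ref{lattice} to supply a scalable lattice, which feeds into both referenced theorems.

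Once the lattice is in place, statement (i) is the conclusion of Theorem \ref{Thm5} applied verbatim (no dimension bookkeeping is needed since its exponent $j+1$ is already independent of the grading). For statement (ii), the plan is to apply Theorem \ref{Thm6} and then simplify the Hausdorff dimension of the asymptotic cone. In the 2-step case the grading has only two layers $V_1$ and $V_2$, so
\[
D \;=\; \sum_{i=1}^{2} i \cdot \dim V_i \;=\; \dim V_1 + 2\dim V_2 \;=\; (n - n_2) + 2 n_2 \;=\; n + n_2.
\]
Substituting $D = n + n_2$ into the exponent $\frac{(D-j)(n-j-1)}{D-j-1}$ from Theorem \ref{Thm6} gives precisely $\frac{(n+n_2-j)(n-j-1)}{n+n_2-j-1}$, i.e.\ the exponent appearing in (ii). The range $2 \le j \le k$ in (ii) matches the range in Theorem \ref{Thm6}; the lower bound $j \ge 2$ is forced by the fact that $\opna{divdim}(G) = n-2$, which was established in the discussion preceding Theorem \ref{Thm6} via the polynomial Lipschitz equivalence $\exp:\mf g \to G$.

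Since the two cited theorems already carry out the substantive work (Burillo-type lower bounds via horizontal submanifolds for (i) and the lower half of (ii); Young-type constructions with horizontal triangulations for the upper half of (ii)), there is no genuine obstacle here: the only nontrivial ingredient absent from those proofs is the existence of the scalable lattice, and this is exactly what Lemma \ref{lattice} supplies in the 2-step setting. The combination of Lemma \ref{lattice} with Theorems \ref{Thm5} and \ref{Thm6}, together with the identity $D = n + n_2$, completes the proof.
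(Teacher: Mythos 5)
Your proposal follows the paper's intended argument exactly: Lemma \ref{lattice} supplies the scalable lattice, part~(i) falls out of Theorem~\ref{Thm5}, and part~(ii) is obtained from Theorem~\ref{Thm6} together with the identity $D = \dim V_1 + 2\dim V_2 = n + n_2$ for a two-layer grading. However, you silently pass over a real discrepancy between the two statements you are comparing: Theorem~\ref{Thm6} is stated for $\operatorname{Div}^{n-j}_G$ while Theorem~\ref{Thm7}(ii) concerns $\operatorname{Div}^{n-j-1}_G$, so matching the exponents after substituting $D = n+n_2$, as you do, does not by itself carry the superscript along. This is a misprint in the paper that should be flagged and resolved rather than ignored: unwinding Proposition~\ref{Divup} with $m+1 = n-j$ (so that Theorem~\ref{Thm2} supplies $F^{n-j}_G(l) \preccurlyeq l^{\delta}$ with $\delta = \tfrac{D-j}{D-j-1}$) gives $m = n-j-1$ and $\delta m = \tfrac{(D-j)(n-j-1)}{D-j-1}$, so the estimate is indeed on $\operatorname{Div}^{n-j-1}_G$, as Theorem~\ref{Thm7} says, and Theorem~\ref{Thm6} should read $\operatorname{Div}^{n-j-1}_G$ as well. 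Your derivation should either quote the corrected form of Theorem~\ref{Thm6} or, as the paper itself effectively does, route the argument directly through Propositions~\ref{Divlow} and~\ref{Divup}, where the bookkeeping is unambiguous.
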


\vspace{2mm}

%
%----------------------------------------------------------------------------------------------------------------------------------------------------------------------------------------------------
%

\subsection{Proofs for Higher Divergence Functions}\label{S6}
\vspace*{-5mm}
We proceed with the statements about the higher divergence functions. 

The statements about the higher divergence functions of a stratified nilpotent Lie group equipped with a left-invariant Riemannian metric  can be naturally divided into two parts: The lower bounds and the upper bounds.
Due to this subdivision we split the proof in the separated treatment of lower and upper bounds. To establish the lower bounds we will prove the more general Proposition \ref{Divlow}, which deduces lower bounds on the higher divergence functions from lower bounds on the filling functions in the setting of arbitrary Lie groups.
Our technique for the upper bounds on the higher divergence functions (Proposition \ref{Divup}) needs sub-Euclidean upper bounds on the filling functions and so only works in the top dimensions.

\subsubsection{Lower bounds}\label{Sectionlb}
\vspace*{-5mm}
We consider a Lie group $G$ equipped with a left-invariant Riemannian metric. So if we transport a cycle by left-multiplication, there is no change of the mass of the cycle or of the mass bounded by the cycle. We use this fact to prove the following proposition, which provides the lower bounds for the higher divergence functions stated in the Theorems \ref{Thm5}, \ref{Thm6} and \ref{Thm7}.

\begin{prop} \label{Divlow}
Let $G$ be a Lie group equipped with a left-invariant Riemannian metric and let $m \in \NN$, such that $\opna{divdim}(G) \ge m$. If the filling function $F^{m+1}_G$ is bounded from below by a function $h: \RR^+ \to \RR^+$, then holds:
$$\opna{Div}^m_G(r) \succcurlyeq  h(r^m) \ .$$

\end{prop}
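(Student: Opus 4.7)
The heart of the argument is that the Riemannian metric is left-invariant, so left-multiplication $L_g: G \to G$ by any $g \in G$ is an isometry. Such isometries preserve the mass of Lipschitz chains and send fillings to fillings, so the data witnessing the lower bound on the filling function can be transported anywhere in $G$ without changing the relevant volumes.

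Concretely, I would fix parameters $\rho \in (0,1]$ and $\alpha > 0$ and, given $r$ (large enough), set $l \defeq \alpha r^m$. The hypothesis $F^{m+1}_G \succcurlyeq h$ supplies, after unwrapping the equivalence relation, an $m$-cycle $a$ in $G$ with $\mass(a) \leq l$ such that every Lipschitz filling $b$ of $a$ satisfies
$$\mass(b) \;\geq\; \tfrac{1}{C}\,h(l) - Cl - C$$
for a constant $C$ depending only on $G$ and $h$.

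Because $a$ is a finite sum of Lipschitz images of compact simplices, $\operatorname{image}(a)$ is compact, so it is contained in some ball $B_R(\mathfrak{o})$. Since $\operatorname{divdim}(G) \geq m$ forces $G$ to be unbounded, I can choose $g \in G$ with $d(\mathfrak{o},g) \geq r + R$. The triangle inequality and the isometry property of $L_g$ then give $d(\mathfrak{o}, gx) \geq d(\mathfrak{o},g) - d(\mathfrak{o},x) \geq r$ for every $x$ in $\operatorname{image}(a)$, so $L_g(a)$ is $r$-avoidant, and $\mass(L_g(a)) = \mass(a) \leq \alpha r^m$. Hence $L_g(a)$ is admissible for the supremum defining $\operatorname{div}^m_{\rho,\alpha}(r)$.

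Now, for any $\rho r$-avoidant filling $b'$ of $L_g(a)$, the chain $L_{g^{-1}}(b')$ is a filling of $a$ (its boundary is $L_{g^{-1}}(\partial b') = L_{g^{-1}}(L_g(a)) = a$), and $\mass(b') = \mass(L_{g^{-1}}(b'))$. Applying the lower bound above to $L_{g^{-1}}(b')$ and taking the infimum over $b'$ yields
$$\operatorname{div}^m_{\rho,\alpha}(r) \;\geq\; \tfrac{1}{C}\,h(\alpha r^m) - C \alpha r^m - C.$$
Absorbing the $O(r^m)$ error and the constants $\alpha, C$ into the 2-parameter equivalence relation delivers $\operatorname{Div}^m_G(r) \succcurlyeq h(r^m)$, as desired. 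The only mildly subtle point is bookkeeping: the translation distance $R$ depends on the chosen cycle $a$ and is not controlled by $r$, but this is harmless because it only affects the choice of $g$, not the mass or avoidance estimates; the constants $L, M, A$ in the definition of $\preceq$ for 2-parameter families are chosen after $\rho, \alpha$ and give enough flexibility to absorb the prefactors $1/C$ and the error term $C\alpha r^m + C = O(r^m)$.
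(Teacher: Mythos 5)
Your proposal is correct and follows essentially the same route as the paper's proof: both extract a hard-to-fill cycle from the lower bound on $F^{m+1}_G$, translate it out of the $r$-ball by left-multiplication (using left-invariance to preserve mass and the filling relation), observe that any $\rho r$-avoidant filling is in particular a filling and hence must be large, and then absorb the resulting prefactors and $O(r^m)$ error into the two-parameter equivalence relation. The only cosmetic differences are in the bookkeeping (you set $l = \alpha r^m$ while the paper uses $l = r^m$ and compensates by choosing $A = \alpha C$), and your explicit justification via compactness of the cycle's image and unboundedness of $G$ makes the translation step slightly more transparent than the paper's brief "transport it out of the $r$-ball by some suitable $g$".
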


\begin{proof}
By assumption holds for the $(m+1)$-dimensional filling function
$$F^{m+1}_G(l) \succcurlyeq h(l) \ .$$
So there is a constant $C \ge 1$ such that for every $l >0$ there exists a $m$-cycle $a_l$ with mass $\mass(a_l) = l$  such that every $(m+1)$-chain $b$ with boundary $\partial b =a$ has to fulfil 
$$C \cdot \mass(b) + Cl +C \ge h(l) \ .$$
Let $\alpha_0=1$ and $\rho_0<1$, such that $G$ is $(\rho_0,m)$-acyclic at infinity. We have to show, that there are constants $L,M \ge 1$, such that for all $\rho \le \rho_0$ and all $\alpha \ge \alpha_0$ there is a constant $A \ge 1$ with:
$$A \cdot \opna{div}^m_{L\rho, M\alpha}(Ar + A) + O(r^m) \ge h(r^m) \ .$$
To do this, we use the above 'hard-to-fill'-cycle $a_l$ with $l= r^m$.\\ 
As $a_l$ is a priori not $r$-avoidant, we transport it out of the $r$-ball around $1 \in G$ by left-multiplication with some suitable group element $g\in G$ and obtain the $r$-avoidant $m$-cycle $g\bullet a_l$. For the mass we obtain
$$\mass(g \bullet a_l)= \mass(a_l)=l$$
as the metric is left-invariant. \\
The left-invariance of the metric also guarantees that the property 'hard-to-fill' is preserved under the left-multiplication, i.e. every $(m+1)$-chain $b$ with boundary $\partial b= g \bullet a_l$ has 
$$\mass(b) \succcurlyeq h(l)\ .$$
As the $\rho r$-avoidance of the filling is an additional restriction to the $(m+1)$-chain, the above inequality for the mass of $b$ holds true for $\rho r$-avoidant $(m+1)$-chains $b$ with boundary $\partial b=g \bullet a_l$. Here we need the assumption $\opna{divdim}G\ge m$ for the existence of such $\rho r$-avoidant fillings.\\
Now we choose $L=M=1$ and for $\rho \le \rho_0$ and $\alpha \ge \alpha_0$ we set $A=\alpha \cdot C$. \\
Then we get
\begin{align*} 
A \cdot \opna{div}^m_{L\rho, M\alpha}(Ar + A) + O(r^m)  &\ge A \cdot \opna{div}^m_{\rho, \alpha}(Ar +A) + A \cdot r^m + A \\
& \stackrel{\mathrm{(1)}}\ge C \cdot \opna{div}^m_{\rho, 1}( r) +  C \cdot r^m + C\\
& \stackrel{\mathrm{(2)}}\ge C \cdot \mass(b_0) + C \cdot r^m + C\\
&\stackrel{\mathrm{(3)}}\ge h(r^m)
\end{align*}
where $(1)$ holds true, as $\alpha \ge 1$, $A\ge C\ge 1$ and as $\opna{div}^m_{\rho, \alpha}(r)$ is increasing in $\alpha$ and $r$. Further $(2)$ holds true, as we have the $r$-avoidant 'hard-to-fill' $m$-cycle $a_l$ with $\mass(a_l)= l = r^m \le \alpha \cdot r^m$ and as $\opna{divdim}(G)\ge m$ there is some optimal $\rho r$-avoidant filling $b_0$ of $a_l$. And $(3)$ holds true, as $b_0$ is a filling of $a_l$ and therefore has to fulfil this inequality by the lower bound on the filling function.
\end{proof}

With this Proposition \ref{Divlow} and our results for the filling functions (Theorem \ref{Thm1}, Theorem \ref{Thm2} and Theorem \ref{Thm3}) we get the lower bounds in Theorem \ref{Thm5}, Theorem \ref{Thm6} and Theorem \ref{Thm7}.

%
%-----------------------------------------------------
%

\subsubsection{Upper bounds}
\vspace*{-5mm}

To obtain the upper bounds for the higher divergence functions, we prove a more general proposition: We deduce sub-Euclidean upper bounds for higher divergence functions from sub-Euclidean upper bounds for filling functions.

\begin{prop} \label{Divup}
Let $M$ be a complete Riemannian manifold and $m \in \mathbb N$, $m \le \opna{divdim}(M)$. If there is a $\delta < \frac{m+1}{m}$, such that the $(m+1)$-dimensional filling function  $F_M^{m+1}(l)$ is bounded from above by $\ l^\delta$, then holds:
$$\opna{Div}^m_M(r) \preccurlyeq r^{\delta  m}\ .$$
\end{prop}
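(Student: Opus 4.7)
The approach combines the filling given by the $F_M^{m+1}$-bound with an iterative slicing-and-refilling scheme that pushes the filling out of $B_{\rho r}(\mathfrak{o})$. The strict sub-Euclidean hypothesis $\delta<\tfrac{m+1}{m}$ is precisely the threshold at which the iteration contracts geometrically.

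Given an $r$-avoidant $m$-cycle $a$ with $\mass(a)\le \alpha r^m$, apply the hypothesis to obtain a (not necessarily avoidant) filling $b_0$ of $a$ with $\mass(b_0)\preccurlyeq r^{\delta m}$. Fix a decreasing sequence $t_k=\rho r+2^{-k}(r-\rho r)$ with $t_k\searrow\rho r$, and inductively produce chains $b_k',b_{k+1}$ and $m$-cycles $A_k$ (with $A_0=a$) as follows. Given a filling $b_k$ of $A_k$, the slicing inequality applied to the distance $d_{\mathfrak o}$ on the shell $B_{t_k}\setminus B_{t_{k+1}}$ yields a radius $\tau_k\in(t_{k+1},t_k)$ at which the slice $S_k=\langle b_k,d_{\mathfrak o},\tau_k\rangle$, an $m$-cycle on $\partial B_{\tau_k}$, satisfies
\[
\boldsymbol{M}(S_k)\preccurlyeq\frac{\mass(b_k)}{t_k-t_{k+1}}.
\]
Set $b_k'=b_k\llcorner\{d_{\mathfrak o}>\tau_k\}$ and let $A_{k+1}$ be the cycle such that $\partial b_k'=A_k-A_{k+1}$; by the filling function hypothesis $A_{k+1}$ admits a new filling $b_{k+1}$ with $\mass(b_{k+1})\preccurlyeq\boldsymbol{M}(S_k)^\delta$.

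Writing $\mu_k:=\log_r\mass(b_k)$, this yields the affine recursion $\mu_{k+1}\approx\delta\mu_k-\delta$ (the $2^k$-contribution from the shrinking shell widths is $o(1)$ on the $\mu$-scale, and harmless provided the iteration terminates in a bounded number of steps). Its fixed point is $\mu^*=\delta/(\delta-1)$, and the hypothesis $\delta<\tfrac{m+1}{m}$ is exactly the condition $\mu_0=\delta m<\mu^*$. Hence $\mu_k-\mu^*=\delta^k(\mu_0-\mu^*)\to-\infty$ geometrically, and after a step $k_0$ depending only on $\delta$ and $m$ the cycle $A_{k_0}$ has mass bounded by an absolute constant. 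Since $m\le\opna{divdim}(M)$, this bounded-mass, $\tau_{k_0}$-avoidant cycle admits a bounded-mass filling avoiding $B_{\rho_0\tau_{k_0}}(\mathfrak{o})$; after relaxing $\rho$ by the fixed factor $\rho_0$ (absorbed into the parameter $L$ of the $\sim$-equivalence), this filling is $\rho r$-avoidant. The concatenation $\sum_{k=0}^{k_0-1}b_k'$ plus this final filling is then a $\rho r$-avoidant filling of $a$, and the geometric tail of the masses is dominated by the first term $r^{\delta m}$.

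The main obstacle is the bookkeeping needed to run the iteration: simultaneously tracking the shrinking shell widths $t_k-t_{k+1}\sim 2^{-k}(1-\rho)r$, the slice masses, and the avoidance radii, and verifying that the iteration terminates at a $k_0$ independent of $r$ so that the accumulated $2^k$-factors remain bounded. The strict inequality $\delta<\tfrac{m+1}{m}$ is essential: at equality one has $\mu_0=\mu^*=m+1$, the iteration is stationary, and the scheme breaks down.
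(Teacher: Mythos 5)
You are re-deriving, in the language of Lipschitz chains, the push-out argument that the paper imports as a black box from Wenger's \cite[Lemma 3.1]{Wenger06} (see also \cite[Lemma 3.4]{Wenger05}). Your slicing-and-refilling recursion is the right heart of the matter, and the fixed-point analysis ($\mu^*=\delta/(\delta-1)$, with $\delta<\tfrac{m+1}{m}$ exactly equivalent to $\mu_0=\delta m<\mu^*$) is correct. The paper instead works at the level of integral currents: it applies Wenger's lemma to get a $\tfrac12 r$-avoidant current $S_\varepsilon$ filling $T=a_\#$ with $\boldsymbol M(S_\varepsilon)\le C\,l^\delta+\varepsilon$, and then uses Federer--Fleming's \cite[Lemma 5.7]{FF60} together with a Lipschitz neighbourhood retraction to convert this back into a $\rho_0 r$-avoidant Lipschitz chain. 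That chain-versus-current translation is a step your sketch passes over.

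The decisive gap is at the termination of your iteration. You stop at the first $k_0$ with $\mass(A_{k_0})=O(1)$ and invoke $m\le\opna{divdim}(M)$ to produce a ``bounded-mass'' avoidant filling of $A_{k_0}$. But $(\rho_0,m)$-acyclicity at infinity only guarantees the \emph{existence} of some avoidant filling; it carries no quantitative information, and an $m$-cycle of mass $O(1)$ supported near radius $\Theta(r)$ can perfectly well demand avoidant fillings of mass growing with $r$. The filling-function hypothesis does give an $O(1)$-mass filling of $A_{k_0}$ --- but one that is not avoidant, so you are back to the original problem. To repair this you would have to run the recursion out to $k\to\infty$: the masses of the $b_k'$ then decay geometrically so the sum converges, and the residual slice cycles tend to zero --- but this convergence takes place in the flat norm on integral currents, not among Lipschitz chains, so you would in any case need the Federer--Fleming approximation step that the paper performs. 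As the proposal stands, it does not close.
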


\begin{proof}
First choose a basepoint $x_0 \in M$ and let $\alpha \ge 1$ and $\rho_0=\frac{1}{4}$. Let $r_0>0$ be sufficiently large. Further let $C>0$, such that 
$$F_M^{m+1}(\alpha r^m) \le C \cdot (\alpha r^m)^\delta \qquad \forall r \ge r_0.$$
Let $a$ be an $r$-avoidant Lipschitz $m$-cycle of $\mass(a)\le \alpha r^m \eqdef l$.\\
The condition on the $(m+1)$-dimensional filling function of $M$ implies the existence of a Lipschitz $(m+1)$-chain $b$ (not necessarily avoidant) with \vspace{-3mm}
\begin{enumerate}[i)]
\item \quad $\partial b=a $ \quad and 			\vspace{-3mm}
\item \quad $\mass(b) \le C \cdot l^\delta\ .$	\vspace{-3mm}
\end{enumerate}
Now let $T=a_\#$ be the Lipschitz cycle $a$ considered as integral current. Then there is an integral current $S \in \boldsymbol I_{m+1}(M)$ with \vspace{-3mm}
\begin{enumerate}[i)]
\item \quad $\partial S=T$\quad and 					\vspace{-3mm}
\item \quad $\boldsymbol M(S) \le C \cdot l^\delta\ .$ 		\vspace{-3mm}
\end{enumerate}
For example, $S=b_\#$ would be an appropriate choice.\\
So the proof of  \cite[Lemma 3.1]{Wenger06} (see also \cite[Lemma 3.4]{Wenger05}) together with the computation in the proof of \cite[Proposition 1.8]{Wenger06} yields the following:

For $x \in M$ and $t>0$ denote by $B(x,t)$ the closed ball of radius $t$ around $x$.\\ 
For sufficiently large $r$ and every $\varepsilon >0$ there is an integral current $S_\varepsilon \in  \boldsymbol I_{m+1}(M)$ with: \vspace{-3mm}
\begin{enumerate}[i)]
\item \quad $\partial S_\varepsilon=T $ , 								\vspace{-3mm}
\item \quad $\boldsymbol M(S_\varepsilon) \le C \cdot l^\delta + \varepsilon$ , 	\vspace{-3mm}
\item \quad $\opna{spt} S_\varepsilon \subset M \setminus B(x_0, \frac{1}{2}r)$ . \vspace{-3mm}
\end{enumerate}

Now we have an $\frac{1}{2}r$-avoidant integral current $S_\varepsilon$, that ``fills" the Lipschitz cycle $a$. From this integral current we construct a $\rho_0r$-avoidant Lipschitz $(m+1)$-chain of the desired mass that fills $a$ as follows: \\
\quad\\
For $A \subset \RR^N$ and $t>0$ denote by $B(A,t) \defeq \{y \in \RR^N \mid \exists x \in A: \opna \|x-y\|_2 \le t\}$.\\
At first we use the Nash embedding theorem to embed $M$ isometrically in some $\RR^N$. Consider $S_\varepsilon$ as an integral current in $\RR^N$.  Then \cite[Lemma 5.7]{FF60} provides for every $\eta >0$ the existence of a Lipschitz $(m+1)$-chain $b'_{\varepsilon,\eta}$ such that: \vspace{-3mm}
\begin{enumerate}[i)]
\item \quad $\partial b'_{\varepsilon,\eta}=a$ ,								\vspace{-3mm}
\item \quad $\mass(b'_{\varepsilon,\eta}) \le C \cdot l^\delta + \varepsilon+ \eta$ ,	\vspace{-3mm}
\item \quad $b'_{\varepsilon,\eta} \subset  B(\opna{spt}(S_\varepsilon), \eta)$ .		\vspace{-3mm}
\end{enumerate}
As $M\subset \RR^N$ is a local Lipschitz neighbourhood retract, we can retract $b'_{\varepsilon,\eta}$  to a Lipschitz $(m+1)$-chain $b_{\varepsilon,\eta}$ on $M$ (compare proof of Proposition \ref{FI}) 
%(\textcolor{red}{ist die Stelle f\"ur die kritische Dimension}) 
with \vspace{-3mm}
\begin{enumerate}[i)]
\item \quad $\partial b_{\varepsilon,\eta}=a$ ,													\vspace{-3mm}
\item \quad $\mass(b_{\varepsilon,\eta}) \le L^{m+1}C \cdot l^\delta + L^{m+1}\varepsilon+ L^{m+1}\eta$ ,	\vspace{-3mm}
\item \quad $b_{\varepsilon,\eta} \subset  B(\opna{spt}(S_\varepsilon), L\eta)$,							\vspace{-3mm}
\end{enumerate}
where $L$ denotes the Lipschitz constant of the retraction.\\
\quad\\
So for $\eta$ sufficiently small, i.e. such small that $ \rho_0 r <r - (L \eta + \frac{1}{2}r)= \frac{1}{2}r - L \eta$, the Lipschitz chain $b_{\varepsilon,\eta}$ is an $\rho_0r$-avoidant filling of the cycle $a$. Further holds for the mass of $b_{\varepsilon,\eta}$:
\begin{align*}
\mass(b_{\varepsilon,\eta}) &\le L^{m+1}C \cdot l^\delta + L^{m+1}\varepsilon+ L^{m+1}\eta = L^{m+1}C \cdot (\alpha r^m)^\delta + L^{m+1}\varepsilon+ L^{m+1}\eta\\
& \preccurlyeq r^{m\delta}
\end{align*}
This proves the claim.
\end{proof}

Theorem \ref{Thm2} provides for an $n$-dimensional stratified nilpotent Lie group $G$ with a lattice $\Gamma$, such that $s_2(\Gamma)\subset \Gamma$, and with an $\Omega$-isotropic, $\Omega$-regular subspace $S \in V_1$ of dimension $k+1$, the following upper bound on the filling functions:
$$F^{n-j}_G(l) \preccurlyeq l^\frac{D-j}{D -j-1} \quad \text{ for } j\le k.$$
Whenever $G$ is not abelian, i.e. not isomorphic to $\RR^n$, the Hausdorff-dimension $D$ is strictly larger than $n$. Therefore holds
$$\frac{D-j+1}{D -j} < \frac{n-j+1}{n-j}$$
and the above proposition applies for $m=n-j$ with $2\le j\le k$. Here $j$ has to be less or equal $2$, as the divergence dimension of an $n$-dimensional stratified nilpotent Lie group is $n-2$ (mentioned earlier in Section \ref{SectionHDF}).\\
\quad\\
Therefore holds 
$$\opna{Div}^{n-j}_G(r) \sim r^\frac{(D-j)(n-j-1)}{D -j-1} \quad \text{for all } 2\le  j \le k,$$
where the lower bounds are obtained Proposition \ref{Divlow} and the upper bounds are obtained by Proposition \ref{Divup}. (For the case $G\cong\RR^n$ see \cite{ABDDY}.)

Together with the lower bounds obtained in Section \ref{Sectionlb} this proves \mbox{Theorem \ref{Thm5}}, Theorem \ref{Thm6} and Theorem \ref{Thm7}.

%
%----------------------------------------------------------------------------------------------------------------------------------------------------------------------------------------------------
%

\section{Examples}\label{S7}
\vspace*{-5mm}
To fill our results with life, we apply them to generalised Heisenberg Groups over $\CC, \HH$ and $\OO$. \\
This produces further applications to lattices in rank 1 symmetric spaces of non-compact type.

\subsection{Generalised Heisenberg Groups}\label{MHeis}
\vspace*{-5mm}
\begin{defi}
The \textup{complex Heisenberg Group} $H^n_\CC$ of dimension $2n+1$ is as manifold 
$$H^n_\CC\defeq \CC^n\times \opna{Im}\CC$$
where $\CC$ denotes the complex numbers. The group law is given by
$$(z,x)(w,y)\defeq (z+w,x+y-\frac{1}{2}\sum_{i=1}^n\opna{Im}(z_i\overline{w_i}))\ .$$
\quad\\
This is a $2$-step nilpotent Lie group with (real) Lie algebra $\mathfrak h^n_\CC=V_1\oplus V_2$ where $V_1=\CC^n,V_2=\opna{Im}\CC\cong \RR$ and with the bracket 
$$[(Z,X),(W,Y)]=(0,\sum_{i=1}^n\opna{Im}(Z_i\overline{W_i}))\ .$$
\\
It can be seen as the unique simply connected Lie group with Lie algebra generated by \\
$B\defeq \{j_1,...,j_n,k_1,...,k_n,K \}$ \\
and with the only non-trivial brackets of the generators\\
$[k,j]=K $ \quad  if both elements in the bracket have the same index.\\
\end{defi}

\begin{defi}
The \textup{quaternionic Heisenberg Group} $H^n_\HH$ of dimension $4n+3$ is as manifold 
$$H^n_\HH\defeq \HH^n\times \opna{Im}\HH$$
where $\HH$ denotes the Hamilton quaternions. The group law is given by
$$(z,x)(w,y)\defeq (z+w,x+y-\frac{1}{2}\sum_{i=1}^n\opna{Im}(z_i\overline{w_i}))\ .$$
\quad\\
This is a $2$-step nilpotent Lie group with (real) Lie algebra $\mathfrak h^n_\HH=V_1\oplus V_2$ where $V_1=\HH^n,V_2=\opna{Im}\HH$ and with the bracket %
$$[(Z,X),(W,Y)]=(0,\sum_{i=1}^n\opna{Im}(Z_i\overline{W_i}))\ .$$
\\
It can be seen as the unique simply connected Lie group with Lie algebra generated by \\
$B\defeq \{h_1,...,h_n,i_1,...,i_n,j_1,...,j_n,k_1,...,k_n,I,J,K \}$ \\
and with the only non-trivial brackets of the generators\\\quad\\
$[a,h]=A \text{ for } a\in \{i_1,...,i_n,j_1,...,j_n,k_1,...,k_n\},$ (here $A$ denotes the capital letter of the choice of $a$)\\\quad\\and\\\quad\\
$   [k,j]=I, \quad [i,k]=J,$\quad $[j,i]=K$ \\\quad\\if both elements in the bracket have the same index.\\
\end{defi}

\begin{defi}
The \textup{octonionic Heisenberg Group} $H^n_\OO$ of dimension $8n+7$ is as manifold
$$H^n_\OO\defeq \OO^n\times \opna{Im}\OO$$
where $\OO$ denotes the Cayley octonions. The group law is given by

$$(z,x)(w,y)\defeq (z+w,x+y-\frac{1}{2}\sum_{i=1}^n\opna{Im}(z_i\overline{w_i}))\ .$$
\quad\\
This is a $2$-step nilpotent Lie group with (real) Lie algebra $\mathfrak h^n_\OO=V_1\oplus V_2$ where $V_1=\OO^n,V_2=\opna{Im}\OO$ and with the bracket %
$$[(Z,X),(W,Y)]=(0,\sum_{i=1}^n\opna{Im}(Z_i\overline{W_i}))\ .$$
\\
It can be seen as the unique simply connected Lie group with Lie algebra generated by 
\begin{align*} B\defeq \{&d_1,\!...,d_n,e_1,\!...,e_n,f_1,\!...,f_n,g_1,\!...,g_n,h_1,\!...,h_n,i_1,\!...,i_n,j_1,\!...,j_n,k_1,\!...,k_n,\\&E,F,G,H,I,J,K \}\end{align*}
and with the only non-trivial brackets of the generators\\\quad\\
$[a,d]=A\\ \text{for } a\in \{e_1,...,e_n,f_1,...,f_n,g_1,...,g_n,h_1,...,h_n,i_1,...,i_n,j_1,...,j_n,k_1,...,k_n\},$ \\
(here $A$ denotes the capital letter of the choice of $a$)
\\\quad\\ and\\\quad\\
 $ [i,f]=[k,h]=[j,g]=E$\\$ [e,i]=[j,h]=[g,k]=F$\\$  [k,f]=[e,j]=[h,i]=G$\\$ [i,g]=[f,j]=[e,k]=H$\\$ [g,h]=[f,e]= [k,j]=I$\\$  [h,f]=[g,e]=[i,k]=J$\\$ [f,g]=[e,h]=[j,i]=K$ \\\quad\\if both elements in the bracket have the same index.
\end{defi}

The above defined generalised Heisenberg Groups are not only abstractly constructed Lie groups. They rather arise in geometry as natural generalisations of the complex Heisenberg Groups. For seeing this one has to consider the complex hyperbolic spaces $\opna{SU}(n,1)/\opna S(\opna U(n)\times \opna U(1))$, the quaternionic hyperbolic spaces $\opna{Sp}(n,1)/(\opna{Sp}(n) \times \opna{Sp}(1))$ and the Cayley plane $\opna F_{4(-20)}/\opna{SO}(9)$ of real dimensions $2n$, $4n$ respectively $16$. In these spaces the horospheres are biLipschitz equivalent to the Heisenberg Groups $H^{n-1}_\CC$, $H^{n-1}_\HH$ respectively $H^1_\OO$. So they are of more general interest, for example for the geometry of non-cocompact lattices in the above mentioned hyperbolic spaces.\\
If one adds the real hyperbolic space $\opna{SO}(n,1)/\opna{SO}(n)$ of dimension $n$  to the above list, the horospheres become biLipschitz equivalent to the \textit{"real Heisenberg Group"} $(\RR^{n-1},+)$. So, by the classification of rank 1 symmetric spaces (see for example \cite{rank1}), the Heisenberg Groups can be seen as horospheres in rank 1 symmetric spaces of non-compact type.

\begin{prop} \label{qoHeis}
Let $G$ be the quaternionic Heisenberg Group $H^n_\HH$ or the octonionic Heisenberg Group $H^n_\OO$ and $\mf g$ the respective Lie algebra. Then there is an $n$-dimensional $\Omega$-regular, $\Omega$-isotropic subspace $S\subset V_1$ of the first layer of the grading \mbox{$\mf g \!=\! V_1 \oplus [\mf g, \mf g]$.} 

\end{prop}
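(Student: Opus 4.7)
The plan is to exhibit an explicit candidate subspace in each case and verify the two conditions directly from the structure constants listed in the definitions of $H^n_\HH$ and $H^n_\OO$. In the quaternionic case I would take
$$ S \defeq \langle h_1,\ldots,h_n\rangle \subset V_1, $$
and in the octonionic case
$$ S \defeq \langle d_1,\ldots,d_n\rangle \subset V_1. $$
In both cases $\dim S = n$ as required.

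The $\Omega$-isotropy is immediate: inspection of the tables of non-trivial brackets shows that $[h_s,h_t]=0$ for all $s,t$ in the quaternionic case, and $[d_s,d_t]=0$ for all $s,t$ in the octonionic case (no bracket of the form $[h,h]$ or $[d,d]$ appears in the respective lists). Since $V_2=[\mathfrak g,\mathfrak g]$ and the brackets of the chosen basis vectors all vanish, the restriction of the curvature form to $\Lambda^2 S$ is zero, using the identity $\omega_i(X_0,X_1)=\tfrac12 b_{n_1+i}^{*}([X_0,X_1])$ recalled in Section \ref{SectionSNLG}.

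For $\Omega$-regularity I would write a generic horizontal vector $\xi\in V_1$ in the basis adapted to the grading and compute $[\xi,X_j]$ for each basis element $X_j$ of $S$. In the quaternionic case, writing
$$ \xi=\sum_{s=1}^{n}\bigl(\alpha_s h_s+\beta_s i_s+\gamma_s j_s+\delta_s k_s\bigr), $$
the only brackets in the defining list that have $h_j$ in one slot are $[i_j,h_j]=I$, $[j_j,h_j]=J$, $[k_j,h_j]=K$, so
$$ [\xi,h_j]=\beta_j I+\gamma_j J+\delta_j K. $$
Thus $\omega_I(\xi,h_j)=\tfrac12\beta_j$, $\omega_J(\xi,h_j)=\tfrac12\gamma_j$, $\omega_K(\xi,h_j)=\tfrac12\delta_j$, and every prescribed matrix $(\sigma_{ij})\in\RR^{3\times n}$ is realised by choosing $\beta_j,\gamma_j,\delta_j$ appropriately (the coefficients $\alpha_s$ remain free). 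The octonionic case is the same calculation, slightly longer: only the brackets $[e_j,d_j],\ldots,[k_j,d_j]$ hit $d_j$, and each equals a distinct basis element of $V_2\cong\RR^7$, so for a generic $\xi$ the vector $[\xi,d_j]$ has coefficients in $V_2$ that can be prescribed independently, whence the system $\omega_i(\xi,d_j)=\sigma_{ij}$ is solvable for every $(\sigma_{ij})\in\RR^{7\times n}$.

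There is no real obstacle here beyond a careful reading of the two lists of structure constants; the choice of a purely ``real'' coordinate direction ($h_s$ respectively $d_s$) is dictated by the observation that these are the unique generators of $V_1$ that bracket injectively with every other generator of $V_1$ of the same index, delivering $\Omega$-regularity with the minimum amount of bookkeeping.
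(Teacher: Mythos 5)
Your proof is correct and follows essentially the same route as the paper: the same choice of $S=\langle h_1,\ldots,h_n\rangle$ (respectively $\langle d_1,\ldots,d_n\rangle$), $\Omega$-isotropy read off from the bracket tables, and $\Omega$-regularity verified by exhibiting an explicit $\xi$ whose coefficients along $i_q,j_q,k_q$ (resp.\ $e_q,\ldots,k_q$) can be prescribed freely. The only cosmetic difference is that the paper packages the solution as $\xi=\sum_{p,q}\sigma_{pq}X_{pq}$ with $X_{pq}$ the unique basis vector satisfying $[X_{pq},h_q]=Z_p$, while you expand $\xi$ in coordinates and read off the same conclusion.
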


\begin{proof} In the quaternionic case we define $Z_1=I$, $Z_2=J$, $Z_3=K$ and $\eta_i=Z_p^*$, where $Z_p^*$ denotes the dual form to $Z_p$. Then we get by the general formula for left-invariant differential $m$-forms on Lie groups
$$(m+1)!(\opna d \gamma)(Y_0,...,Y_m)=\sum_{i<j}(-1)^{i+j+1}\gamma([Y_i,Y_j],X_0,...,\hat Y_i,...,\hat Y_j,...,Y_m)$$
the following components of the curvature form $\Omega$:
$$\omega_i=\opna d\eta_i=\frac{1}{2}\cdot\sum_{[X_l,X_m]=Z_i} X_l^*\wedge X_m^* \quad \text{ where } X_q\in \{h_q,i_q,j_q,k_q\}.$$
We choose $S=\langle h_1,...,h_{n} \rangle \subset  V_1$, which is $\Omega$-isotropic as $[h_u,h_v]=0$ and therefore $\omega_i(h_u,h_v)=0$ for all $u,v \in \{1,...,n\}$.\\
It remains to give for every choice of $\sigma_{pq}$ a solution $\xi$ for $\omega_p(\xi,h_q)=\sigma_{pq}$ for $p=1,2,3$ and $q=1,...,n$. 
Let $X_{pq}$ denote the unique element in $\{i_q,j_q,k_q\}$ with $[X_{pq},h_q]=Z_p$ . Then one can check by a short computation, that such a solution is given by the following element:
$$\xi= \sum_{p,q} \sigma_{pq}X_{pq} \ .$$
In the octonionic case we define $Z_1=E$, $Z_2=F$, $Z_3=G$, $Z_4=H$, $Z_5=I$, $Z_6=J$, $Z_7=K$ and $\eta_p=Z_p^*$, where $Z_p^*$ denotes the dual form to $Z_p$.  Then we get by the general formula for left-invariant differential $m$-forms on Lie groups the following components of the curvature form $\Omega$: 
$$\omega_i=\opna d\eta_i=\frac{1}{2}\cdot\sum_{[X_l,X_m]=Z_i} X_l^*\wedge X_m^* \quad \text{ where } X_p\in \{e_q,f_q,g_q,h_q,i_q,j_q,k_q\}$$
We choose $S=\langle d_1,...,d_{n} \rangle \subset  V_1$, which is $\Omega$-isotropic as $[d_u,d_v]=0$ and therefore $\omega_i(d_u,d_v)=0$  for all $u,v \in \{1,...,n\}$.\\
It remains to give for every choice of $\sigma_{pq}$ a solution $\xi$ for $\omega_p(\xi,d_q)=\sigma_{pq}$ for $p=1,2,3,4,5,6,7$ and $q=1,...,n$. 
Let $X_{pq}$ denote the unique element in $\{e_q,f_q,g_q,h_q,i_q,j_q,k_q\}$ with  $[X_{pq},d_q]=Z_p$  .
Then one can check by a short computation, that such a solution is given by the following element:
$$\xi= \sum_{p,q} \sigma_{pq}X_{pq} \ .$$
\end{proof}

\begin{cor}\label{Cor1}
Let $H^n_\HH$ be the $4n+3$ dimensional quaternionic Heisenberg Group. Then holds:
\begin{enumerate}[i)]
\item $F^{j+1}_{H^n_\HH}(l) \sim l^\frac{j+1}{j}$\quad for \ $j<n$,
\item $F^{n+1}_{H^n_\HH}(l) \preccurlyeq l^\frac{n+2}{n}$\quad,
\item  $F^{m+1}_{H^n_\HH}(l) \sim l^\frac{m+4}{m+3}$\quad for \ $3n+3 < m < 4n+3$.\\
\end{enumerate}
\end{cor}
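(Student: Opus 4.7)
The plan is essentially a direct book-keeping application of Theorem \ref{Thm3} to the input provided by Proposition \ref{qoHeis}, so I will describe the substitution carefully rather than invent anything new.

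First I would record the structural data for $G=H^n_\HH$. As a manifold $G$ has dimension $N=4n+3$, its Lie algebra carries the grading $\mf h^n_\HH = V_1 \oplus V_2$ with $\dim V_1 = 4n$ and $n_2 \defeq \dim V_2 = 3$, and $G$ is simply connected and $2$-step nilpotent. Hence Theorem \ref{Thm3} is applicable as soon as one produces an $(k+1)$-dimensional $\Omega$-regular, $\Omega$-isotropic subspace of $V_1$. Proposition \ref{qoHeis} supplies exactly such a subspace of dimension $n$, so one may apply Theorem \ref{Thm3} with $k = n-1$.

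Next I would translate each of the three conclusions of Theorem \ref{Thm3} with this choice of $k$. Part (i) of the theorem gives $F^{j+1}_G(l) \sim l^{(j+1)/j}$ for $j \le k = n-1$, which is exactly statement (i) of the corollary. Part (ii) of the theorem gives
$$F^{k+2}_G(l) \preccurlyeq l^{(k+3)/(k+1)},$$
which with $k=n-1$ becomes $F^{n+1}_G(l) \preccurlyeq l^{(n+2)/n}$, matching statement (ii). For (iii), part (iii) of the theorem reads
$$F^{N-j}_G(l) \sim l^{(N+n_2-j)/(N+n_2-j-1)} \qquad \text{for all } j \le k-1,$$
and plugging in $N=4n+3$, $n_2=3$, $k=n-1$ yields
$$F^{4n+3-j}_G(l) \sim l^{(4n+6-j)/(4n+5-j)} \qquad \text{for all } j \le n-2.$$
Reindexing via $m+1 \defeq 4n+3-j$ (so $j = 4n+2-m$) turns the numerator into $m+4$ and the denominator into $m+3$, while the range $0 \le j \le n-2$ becomes $3n+4 \le m+1 \le 4n+3$, i.e.\ $3n+3 < m < 4n+3$. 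This is statement (iii).

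There is essentially no obstacle beyond careful indexing: all the geometric and analytic content sits inside Theorem \ref{Thm3} and Proposition \ref{qoHeis}. The only thing worth double-checking during the write-up is that the range in (iii) is stated with strict inequalities (coming from $j \ge 0$ on one side and $j \le k-1 = n-2$ on the other), and that the $2$-step version of the theorem absorbs the lattice requirement via Lemma \ref{lattice}, so one does not need to exhibit a scalable lattice in $H^n_\HH$ by hand.
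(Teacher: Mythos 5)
Your proposal is correct and matches the paper's approach: the paper derives Corollary~\ref{Cor1} in exactly this way, by feeding Proposition~\ref{qoHeis} (which supplies an $n$-dimensional $\Omega$-regular, $\Omega$-isotropic subspace, so $k=n-1$) into Theorem~\ref{Thm3} with the structural data $N=4n+3$, $n_2=3$. Your reindexing of part (iii) is carried out more explicitly than in the paper's brief remark, but the argument is the same.
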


\begin{cor}\label{Cor2}
 Let $H^n_\OO$ be the $8n+7$ dimensional octonionic Heisenberg Group. Then holds:
\begin{enumerate}[i)]
\item $F^{j+1}_{H^n_\OO}(l) \sim l^\frac{j+1}{j} $\quad for \ $j<n$,
\item $F^{n+1}_{H^n_\OO}(l) \preccurlyeq l^\frac{n+2}{n}$\quad,
\item $F^{m+1}_{H^n_\OO}(l) \sim l^\frac{m+8}{m+7}$\quad for \ $7n+7 < m < 8n+7$.
\end{enumerate}
\end{cor}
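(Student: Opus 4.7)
The plan is to obtain Corollary \ref{Cor2} as a direct application of Theorem \ref{Thm3} (filling functions of $2$-step nilpotent Lie groups) once the hypothesis on the $\Omega$-regular $\Omega$-isotropic subspace has been verified. The verification of that hypothesis is already available: Proposition \ref{qoHeis} furnishes an $n$-dimensional $\Omega$-regular, $\Omega$-isotropic subspace $S\subset V_1$ of the Lie algebra $\mf h_\OO^n$ of $H_\OO^n$, namely the span $\langle d_1,\dots,d_n\rangle$ of the ``distinguished'' generators of the first layer. So no new algebraic input is needed; all that remains is substitution.

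The octonionic Heisenberg group $H_\OO^n$ is simply connected, $2$-step nilpotent, of dimension $n = 8n+7$ (in the notation of Theorem \ref{Thm3}, the ambient dimension), and its grading $\mf h_\OO^n = V_1\oplus V_2$ satisfies $\dim V_2 = \dim\opna{Im}\OO = 7$, so $n_2 = 7$. With $k+1 \defeq \dim S = n$, i.e.\ $k = n-1$, Theorem \ref{Thm3}(i) gives
\[
F^{j+1}_{H^n_\OO}(l)\sim l^{\frac{j+1}{j}}\qquad\text{for all } j\le n-1,
\]
which is precisely statement (i). Part (ii) of Theorem \ref{Thm3} yields
\[
F^{k+2}_{H^n_\OO}(l)=F^{n+1}_{H^n_\OO}(l)\preccurlyeq l^{\frac{k+3}{k+1}}=l^{\frac{n+2}{n}},
\]
which is statement (ii).

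For statement (iii) I apply Theorem \ref{Thm3}(iii): for all $j\le k-1 = n-2$ one has
\[
F^{(8n+7)-j}_{H^n_\OO}(l)\sim l^{\frac{(8n+7)+7-j}{(8n+7)+7-j-1}}=l^{\frac{8n+14-j}{8n+13-j}}.
\]
Reparametrising by $m\defeq (8n+7)-j-1 = 8n+6-j$, the range $0\le j\le n-2$ translates to $7n+8 \le m \le 8n+6$, i.e.\ $7n+7 < m < 8n+7$, and the exponent becomes $\frac{m+8}{m+7}$, giving
\[
F^{m+1}_{H^n_\OO}(l)\sim l^{\frac{m+8}{m+7}}\qquad\text{for } 7n+7 < m < 8n+7,
\]
as required. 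Since every step is a direct substitution into an already-proved theorem, there is no genuine obstacle; the only thing to be careful about is the index bookkeeping in part (iii) (matching $k=n-1$, $n_2 = 7$, and the shift $m = n-j-1$ to rewrite the exponent in the form $\tfrac{m+8}{m+7}$).
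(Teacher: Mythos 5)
Your proof is correct and follows precisely the same route as the paper: invoke Proposition \ref{qoHeis} to get the $n$-dimensional $\Omega$-regular, $\Omega$-isotropic subspace $S=\langle d_1,\dots,d_n\rangle\subset V_1$, then plug $k=n-1$, $n_2=7$ into Theorem \ref{Thm3} and reindex to reach the stated exponents. Your index bookkeeping for part (iii) (showing $D-j=m+8$ and $D-j-1=m+7$ with $D=8n+14$) is carried out more explicitly than in the paper, which in fact misstates the Hausdorff dimension as $8n+7$ in passing even though the conclusion is right — so your write-up is actually cleaner on this point.
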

By the above Proposition \ref{qoHeis}, the quaternionic and the octonionic Heisenberg Groups fulfil the conditions of Theorem \ref{Thm3} and Theorem \ref{Thm7} as they are $2$-step nilpotent. So Corollary \ref{Cor1} and Corollary \ref{Cor2} follow by Theorem \ref{Thm3} as the Hausdorff-dimension of $H^n_\HH$ is $4n+3$ and the Hausdorff-dimension of $H^n_\OO$ is $8n+7$.

The proof of Theorem \ref{Thm5} and Theorem \ref{Thm6} (respectively Theorem \ref{Thm7}) on the higher divergence functions only uses the bounds on the filling functions. So these theorems remain true if one replaces the conditions of them by the bounds on the filling functions established in \mbox{Theorem \ref{Thm1}} and Theorem \ref{Thm2} (respectively Theorem \ref{Thm3}). Using the  bounds for the complex Heisenberg Groups (computed in \cite{Young1} and \cite{YoungII}) we get the following behaviour of the higher divergence functions:

\begin{cor}\label{Cor3}
Let $H^n_\CC$ be the $2n+1$ dimensional complex Heisenberg Group. Then holds:
\begin{enumerate}[i)]
\item $ \opna{Div}_{H^n_\CC}^j(r) \succcurlyeq r^{j+1}$\quad for \ $j<n$,
\item $ \opna{Div}_{H^n_\CC}^n(r) \succcurlyeq r^{n+2}$\quad,
\item $\opna{Div}^{m}_{H^n_\CC}(r) \sim r^\frac{(m+2)m}{m+1}$\quad for \ $n+1 \le m < 2n$.
\end{enumerate}
\end{cor}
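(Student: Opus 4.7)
The plan is to apply Propositions \ref{Divlow} and \ref{Divup} directly, feeding in the sharp filling function estimates for $H^n_\CC$ that are already in the literature rather than deriving them anew. Recall that for the complex Heisenberg Group one has the Hausdorff-dimension of the asymptotic cone $D = \dim V_1 + 2 \dim V_2 = 2n + 2$, and that Young in \cite{Young1} and \cite{YoungII} established $F^{j+1}_{H^n_\CC}(l) \sim l^{(j+1)/j}$ for $j < n$, the sharp super-Euclidean growth $F^{n+1}_{H^n_\CC}(l) \sim l^{(n+2)/n}$, and in the top dimensions $F^{(2n+1)-j}_{H^n_\CC}(l) \sim l^{(D-j)/(D-j-1)}$ for a suitable range of $j$, which upon setting $m + 1 = (2n+1) - j$ reads $F^{m+1}_{H^n_\CC}(l) \sim l^{(m+2)/(m+1)}$ for $n+1 \le m < 2n$.

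For part (i), I would fix $j < n$ and invoke Proposition \ref{Divlow} with the Euclidean lower bound $h(l) = l^{(j+1)/j}$ on $F^{j+1}_{H^n_\CC}$, which immediately yields $\opna{Div}^j_{H^n_\CC}(r) \succcurlyeq h(r^j) = r^{j+1}$. For part (ii), the same proposition applied with the super-Euclidean lower bound $h(l) = l^{(n+2)/n}$ from \cite{Young1} gives $\opna{Div}^n_{H^n_\CC}(r) \succcurlyeq (r^n)^{(n+2)/n} = r^{n+2}$, which is what is claimed. For these parts one just has to remark that $H^n_\CC$ is a $(2n+1)$-dimensional nilpotent Lie group, so by the discussion preceding Theorem \ref{Thm6} it has divergence dimension $2n - 1$, well above the range $j \le n$ in which we are working.

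For part (iii), both bounds are needed. The lower bound again comes from Proposition \ref{Divlow} applied to the sharp filling estimate $F^{m+1}_{H^n_\CC}(l) \succcurlyeq l^{(m+2)/(m+1)}$, giving $\opna{Div}^m_{H^n_\CC}(r) \succcurlyeq (r^m)^{(m+2)/(m+1)} = r^{m(m+2)/(m+1)}$. For the matching upper bound I would appeal to Proposition \ref{Divup} with $\delta = (m+2)/(m+1)$. Two checks are required: that $m \le \opna{divdim}(H^n_\CC) = 2n - 1$, which is assured by $m < 2n$; and that $\delta < (m+1)/m$, which is equivalent to $m(m+2) < (m+1)^2$, i.e., to $0 < 1$. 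The proposition then yields $\opna{Div}^m_{H^n_\CC}(r) \preccurlyeq r^{m \delta} = r^{m(m+2)/(m+1)}$, matching the lower bound.

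The proof is therefore essentially bookkeeping; the main conceptual content lies in Propositions \ref{Divlow} and \ref{Divup}, and in Young's sharp filling estimates for $H^n_\CC$. The one small caveat I would flag in the write-up is that for the top-dimensional case (iii) it is crucial that the filling function of $H^n_\CC$ in these dimensions is strictly sub-Euclidean, since otherwise Proposition \ref{Divup} (which requires $\delta < (m+1)/m$) would not apply; this is precisely the reason why an exact asymptotic is available only for $m \ge n + 1$ and not throughout the low-dimensional range.
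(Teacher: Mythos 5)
Your proposal is correct and follows essentially the same route as the paper: quote Young's sharp filling function estimates for $H^n_\CC$ from \cite{Young1} and \cite{YoungII}, then feed the lower bounds into Proposition~\ref{Divlow} and, in the top-dimensional range where the filling exponent is strictly sub-Euclidean, the upper bounds into Proposition~\ref{Divup}. The paper's proof is terser (it simply lists the three filling function asymptotics and then cites the two propositions), but your bookkeeping — the substitution $m+1 = (2n+1)-j$, the verification that $\opna{divdim}(H^n_\CC) = 2n-1$, and the check that $\delta = \frac{m+2}{m+1} < \frac{m+1}{m}$ — is exactly what is implicitly being invoked.
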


And for the quaternionic and octonionic Heisenberg Groups we obtain:

\begin{cor}\label{Cor4}
Let $H^n_\HH$ be the $4n+3$ dimensional quaternionic Heisenberg Group. Then holds:
\begin{enumerate}[i)]
\item $ \opna{Div}_{H^n_\HH}^j(r) \succcurlyeq r^{j+1}$\quad for \ $j<n$,

\item $\opna{Div}^{m}_{H^n_\HH}(r) \sim r^\frac{(m+4)m}{m+3}$\quad for \ $3n+3 < m < 4n+2$.\\
\end{enumerate}
\end{cor}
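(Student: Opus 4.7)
My plan is to derive Corollary \ref{Cor4} as a direct application of Theorem \ref{Thm7} to the quaternionic Heisenberg group $H^n_\HH$. The essential geometric input has already been supplied by Proposition \ref{qoHeis}: the first layer $V_1 = \HH^n$ of the Lie algebra $\mf h^n_\HH$ contains the $n$-dimensional subspace $S = \langle h_1, \ldots, h_n \rangle$, which is $\Omega$-isotropic (since $[h_u, h_v] = 0$ for all $u, v$) and $\Omega$-regular (the element $\xi = \sum_{p,q} \sigma_{pq} X_{pq}$ solves the system $\omega_p(\xi, h_q) = \sigma_{pq}$ for every prescribed matrix $(\sigma_{pq})$). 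Since $H^n_\HH$ is $2$-step nilpotent and simply connected, Theorem \ref{Thm7} applies with parameter $k + 1 = n$, that is, $k = n - 1$.

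For part (i), I would invoke Theorem \ref{Thm7}(i) directly: it yields $\opna{Div}^j_{H^n_\HH}(r) \succcurlyeq r^{j+1}$ for all $j \le k = n - 1$, which is precisely the range $j < n$ stated in the corollary.

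For part (ii), the plan is a parameter substitution in Theorem \ref{Thm7}(ii). Write $N = \dim H^n_\HH = 4n+3$ for the ambient dimension and $n_2 = \dim V_2 = \dim \opna{Im}\HH = 3$. Introduce the new variable $m = N - j - 1 = 4n + 2 - j$, so that the exponent simplifies as follows:
\begin{align*}
N + n_2 - j &= 4n + 6 - j = m + 4, \\
N - j - 1 &= m, \\
N + n_2 - j - 1 &= m + 3.
\end{align*}
Thus Theorem \ref{Thm7}(ii) becomes $\opna{Div}^m_{H^n_\HH}(r) \sim r^{(m+4)m/(m+3)}$. The admissible range $2 \le j \le k = n - 1$ translates to $3n + 3 \le m \le 4n$, which lies inside (and so gives) the range stated in the corollary.

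There is no genuine obstacle: once Proposition \ref{qoHeis} is in hand, the result is pure bookkeeping. The alternative route, explicitly mentioned before Corollary \ref{Cor3}, is to feed the filling function bounds of Corollary \ref{Cor1} into Propositions \ref{Divlow} (for the lower bounds) and \ref{Divup} (for the matching upper bounds in the high-dimensional range, where sub-Euclidean behaviour of $F^{m+1}$ kicks in); this bypasses re-invoking Theorem \ref{Thm7} but goes through the same core machinery. Either packaging delivers the claim.
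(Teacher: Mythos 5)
Part (i) is fine: Proposition \ref{qoHeis} supplies the $n$-dimensional $\Omega$-regular, $\Omega$-isotropic subspace, so $k=n-1$ and Theorem \ref{Thm7}(i) gives $\opna{Div}^j_{H^n_\HH}(r) \succcurlyeq r^{j+1}$ for $j\le n-1$.

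Part (ii) has a genuine gap in the range check. Your exponent arithmetic under the substitution $m = N-j-1 = 4n+2-j$ (with $N=4n+3$, $n_2=3$) is correct and does reduce $(N+n_2-j)(N-j-1)/(N+n_2-j-1)$ to $(m+4)m/(m+3)$. But taking Theorem \ref{Thm7}(ii) literally with $j\in\{2,\dots,k\}=\{2,\dots,n-1\}$ gives $m\in\{3n+3,\dots,4n\}$, and this is \emph{not} a subset of the corollary's $3n+3 < m < 4n+2$, i.e.\ $m\in\{3n+4,\dots,4n+1\}$: your range contains $m=3n+3$ (excluded by the corollary) and misses $m=4n+1$ (included by the corollary). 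The claim that $3n+3 \le m \le 4n$ ``lies inside (and so gives)'' the stated range is false on both counts, and even a genuine containment would not ``give'' the full range. The source of the trouble is that the $j$-range $2\le j\le k$ printed in Theorem \ref{Thm7}(ii) (and Theorem \ref{Thm6}) is itself off by one: Proposition \ref{Divup} converts a sub-Euclidean bound on $F^{m+1}_G$ into one on $\opna{Div}^m_G$, so with $m=n-j-1$ one needs $F^{n-j}_G$ sub-Euclidean, which Proposition \ref{topup} supplies only for $j\le k-1$; the floor $m\le\opna{divdim}(G)=n-2$ forces $j\ge 1$. The correct range is $1\le j\le k-1$, which under your substitution gives exactly $m\in\{3n+4,\dots,4n+1\}$.

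The clean way to finish is the second route you mention and then discard: from Corollary \ref{Cor1}(iii), $F^{m+1}_{H^n_\HH}(l) \sim l^{(m+4)/(m+3)}$ for $3n+3<m<4n+3$. Here $(m+4)/(m+3) < (m+1)/m$ always, so Proposition \ref{Divup} applies and gives the upper bound, Proposition \ref{Divlow} gives the lower bound, and intersecting with $m\le\opna{divdim}(H^n_\HH)=4n+1$ yields $\opna{Div}^m_{H^n_\HH}(r)\sim r^{(m+4)m/(m+3)}$ precisely for $3n+3<m<4n+2$. Your concluding sentence ``Either packaging delivers the claim'' is therefore not accurate: only the second route delivers the stated range without first correcting the $j$-range in Theorem \ref{Thm7}.
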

\begin{cor}\label{Cor5}
 Let $H^n_\OO$ be the $8n+7$ dimensional octonionic Heisenberg Group. Then holds:
\begin{enumerate}[i)]
\item $\opna{Div}_{H^n_\OO}^j(r) \succcurlyeq r^{j+1}$\quad for \ $j<n$,

\item $\opna{Div}^{m}_{H^n_\OO}(r) \sim r^\frac{(m+8)m}{m+7}$\quad for \ $7n+7 < m < 8n+6$.
\end{enumerate}
\end{cor}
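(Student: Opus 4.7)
The plan is to derive Corollary \ref{Cor5} as an immediate specialisation of Theorem \ref{Thm7} to the octonionic Heisenberg group, so essentially all of the creative work has already been carried out; what remains is verification of hypotheses and an index substitution.

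First I would check the hypotheses of Theorem \ref{Thm7}. The group $H^n_\OO$ is simply connected and $2$-step nilpotent by construction, with Lie algebra $\mathfrak h^n_\OO = V_1\oplus V_2$ of respective dimensions $8n$ and $n_2 = 7$. Proposition \ref{qoHeis} provides the key algebraic input: the subspace $S = \langle d_1,\ldots,d_n\rangle \subset V_1$ is $n$-dimensional, $\Omega$-isotropic, and $\Omega$-regular. Thus Theorem \ref{Thm7} is applicable with $k+1 = n$, i.e.\ $k = n-1$.

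Part $(i)$ of the corollary then follows directly from Theorem \ref{Thm7} $(i)$: for every $j \le k = n-1$ we obtain $\opna{Div}^j_{H^n_\OO}(r) \succcurlyeq r^{j+1}$, which is the stated range $j < n$. For part $(ii)$, I would apply Theorem \ref{Thm7} $(ii)$ with $N \defeq 8n+7$ (the dimension of $H^n_\OO$) and $n_2 = 7$. The statement produced is
$$\opna{Div}^{N-j-1}_{H^n_\OO}(r) \sim r^{\frac{(N+7-j)(N-j-1)}{N+7-j-1}} \quad \text{for } 2 \le j \le n-1.$$
Setting $m \defeq N - j - 1 = 8n+6-j$, one has $N+7-j = m+8$, $N-j-1 = m$, and $N+7-j-1 = m+7$, so the exponent becomes $\tfrac{(m+8)m}{m+7}$. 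The range $2 \le j \le n-1$ translates to $7n+7 \le m \le 8n+4$, which gives the required asymptotic equivalence on the interior of the interval appearing in the statement.

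The only ``obstacle'' is bookkeeping: one must be careful to substitute $n_2 = 7$ and the dimension $N = 8n+7$ consistently in the exponent and in the range translation. No new geometric or analytic input is needed. As an alternative route (mentioned in the paragraph preceding the corollaries in the paper), one could bypass Theorem \ref{Thm7} altogether by feeding the filling function bounds of Corollary \ref{Cor2} directly into Proposition \ref{Divlow} (for the lower bound in $(i)$ and in $(ii)$) and Proposition \ref{Divup} (for the matching upper bound in $(ii)$, using that the exponent $\tfrac{D-j}{D-j-1}$ for $H^n_\OO$ with $D = 8n+14 > N = 8n+7$ is strictly sub-Euclidean in the relevant dimensions). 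Both routes yield the same conclusion and are essentially identical in content.
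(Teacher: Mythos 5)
Your route is the paper's own (the paper simply asserts that the corollary follows from Theorem \ref{Thm7}), and the verification of the hypotheses for $H^n_\OO$ -- simply connected, $2$-step, $N=8n+7$, $n_2=7$, and an $n$-dimensional $\Omega$-regular, $\Omega$-isotropic subspace from Proposition \ref{qoHeis}, so $k=n-1$ -- is correct, as is part (i) and the algebraic simplification of the exponent in (ii). The gap is in the range translation. Carrying $2\le j\le n-1$ literally from Theorem \ref{Thm7}(ii) into $m=N-j-1=8n+6-j$ gives $7n+7\le m\le 8n+4$, whereas the corollary asserts $7n+7<m<8n+6$, i.e.\ $7n+8\le m\le 8n+5$. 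These integer ranges are \emph{shifted}, not nested: your derivation picks up the extra value $m=7n+7$, and -- more importantly -- it does not cover $m=8n+5$, which the corollary claims. The remark that this ``gives the required asymptotic equivalence on the interior of the interval'' does not fix this, since $m=8n+5$ lies in the interior of $(7n+7,8n+6)$ and is absent from your derived range.

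The mismatch traces to the stated range in Theorem \ref{Thm7}(ii), which is internally inconsistent with the surrounding material: the exponent $\tfrac{(n+n_2-j)(n-j-1)}{n+n_2-j-1}$ corresponds to $\opna{Div}^m$ with $m=n-j-1$, and the constraints from the two-sided filling estimate (Theorem \ref{Thm3}(iii), valid for $j\le k-1$) together with $m\le\opna{divdim}(G)=n-2$ (i.e.\ $j\ge 1$) give the range $1\le j\le k-1$, not $2\le j\le k$. With that corrected range one recovers exactly $7n+8\le m\le 8n+5$. Your proposed alternative route -- feeding Corollary \ref{Cor2}(iii) (valid for $7n+7<m<8n+7$) directly into Propositions \ref{Divlow} and \ref{Divup} and imposing $m\le\opna{divdim}(H^n_\OO)=8n+5$ -- does produce the correct range, so the two routes are not, as you claim, ``essentially identical''; the second one is the one that proves the corollary as stated and exposes the off-by-one in the theorem.
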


\mbox{Corollary \ref{Cor4}} and Corollary \ref{Cor5} follow by Theorem \ref{Thm7}.
\quad\\
For the proof of Corollary \ref{Cor3} concerning the higher divergence functions of the complex Heisenberg Group $H^n_\CC$, we use \cite[Corollary 1.1]{Young1} and \mbox{\cite[Theorem 1]{YoungII}}, which together state the following behaviour of the filling functions: 
$$F_{H^n_\CC}^{j+1}(\ell) \sim \ell^\frac{j+1}{j} \quad\text{ for } j<n,$$
$$F_{H^n_\CC}^{n+1}(\ell) \sim \ell^\frac{n+2}{n}  \quad ,\hspace{1.5cm} $$
$$F_{H^n_\CC}^{j+1}(\ell) \sim \ell^\frac{j+2}{j+1}  \quad\text{ for } j>n. $$
With Proposition \ref{Divlow} and Proposition \ref{Divup} this proves Corollary \ref{Cor3}.

In the above Corollaries \ref{Cor1} and \ref{Cor2} concerning the filling functions of the quaternionic and the octonionic Heisenberg Groups we only stated upper bounds in dimension $n$. This is due to the fact, that the technique for the lower bounds, used by Burillo in \cite{Burillo} to compute the lower bounds in dimension $n$ for the complex Heisenberg Groups, is very special. Indeed it can't be generalised to stratified nilpotent Lie groups, not even to the octonionic Heisenberg Groups as the example of $H^1_\OO$ (discussed below) shows. 

By the discussion following Theorem \ref{Thm1}, one can see, that $n$ is the maximal possible dimension of an $\Omega$-isotropic, $\Omega$-regular subspace $S$ of $V_1$ in the grading of the Lie algebra of the quaternionic respectively octonionic Heisenberg Group $H^n_\HH$ respectively $H^n_\OO$. This follows from the equalities
$$ \dim V_1 - n =4n-n=3n = n(4n+3 -4n)=n(\dim \mf g - \dim V_1)$$ for the quaternionic case,
and
$$ \dim V_1 - n =8n-n=7n = n(8n+7 -8n)=n(\dim \mf g - \dim V_1)$$ for the octonionic case.\\
And as the left hand side is strictly decreasing and the right hand side is strictly increasing in the dimension of the $\Omega$-regular, $\Omega$-isotropic horizontal subspace, the necessary condition for the existence of such a subspace  is not satisfied for any dimension greater than $n$.

\begin{lem}\label{oHeis}
There is no $(n+1)$-dimensional $\Omega$-isotropic subspace $S\subset V_1$ in the  octonionic Heisenberg Group  $H^{n}_\OO$.
\end{lem}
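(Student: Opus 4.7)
The plan is to reformulate $\Omega$-isotropy as an algebraic condition on the octonionic Hermitian form, realize a purported $(n+1)$-dimensional $\Omega$-isotropic subspace as an ``octonionic orthonormal frame'' in $\OO^n$, and rule this out via the Clifford-module structure of $V_1 = \OO^n$. By the bracket formula $[(Z,X),(W,Y)] = (0, \sum_q \opna{Im}(Z_q \overline{W_q}))$ on $\mf h^n_\OO$, a subspace $S \subset V_1 = \OO^n$ is $\Omega$-isotropic if and only if the octonionic Hermitian form $\langle v, w\rangle := \sum_q v_q \overline{w_q}$ takes values in $\RR \subset \OO$ on $S \times S$. Since $\langle v, v\rangle = |v|^2$, the restriction to $S$ is a positive definite real inner product. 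Gram--Schmidt then yields, for a purported $(n+1)$-dimensional $\Omega$-isotropic $S$, a basis $v_0, \dots, v_n$ with $\langle v_i, v_j\rangle = \delta_{ij}$; assembling the coordinates into an $(n+1) \times n$ octonionic matrix $M = (v_{i,q})$ gives the matrix identity $M M^* = I_{n+1}$.

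To rule out such a ``wide isometry'', I would exploit the Clifford-module structure on $V_1$. For each $X \in \opna{Im}\OO$ the componentwise left-multiplication $L_X : V_1 \to V_1$ is $\RR$-linear and skew-symmetric for the Euclidean inner product, and the alternative law for $\OO$ (i.e.\ the alternation of the associator $[X, Y, z]$ in its three arguments) yields the Clifford relation $L_X L_Y + L_Y L_X = -2 \langle X, Y\rangle \, \mathrm{Id}$. Hence $V_1$ is a $\opna{Cl}_{0,7}$-module, isomorphic to $n$ copies of the $8$-dimensional irreducible spin module. The components of the curvature form are $\omega_X(v, w) = \langle v, L_X w\rangle$, so $\Omega$-isotropy of $S$ is equivalent to $L_X S \perp S$ for every $X \in \opna{Im}\OO$; combined with the ambient bound $\dim V_1 = 8n$, a dimension count on $S + L_{X_1} S + \cdots + L_{X_7} S$ (using an orthonormal basis $X_1, \dots, X_7$ of $\opna{Im}\OO$) yields $\dim S \leq n$, contradicting $\dim S = n+1$.

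The main obstacle is justifying this dimension count, which requires showing that the seven translates $L_{X_1} S, \dots, L_{X_7} S$ together with $S$ span $8 \dim S$ dimensions---equivalently, that no non-trivial relation $\sum_\alpha L_{X_\alpha} w_\alpha = 0$ with $w_\alpha \in S$ can hold. This should follow from the irreducibility of the spin summands of $V_1$ applied to cyclic vectors of $S$. A more computational alternative bypasses Clifford modules entirely by using the Cayley--Dickson decomposition $\OO = \HH \oplus \HH \ell$ to write $M = A + B \ell$ with quaternionic $(n+1) \times n$ matrices $A, B$, converting $M M^* = I_{n+1}$ into a system of associative quaternionic equations ruled out by the standard rank argument.
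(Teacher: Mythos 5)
Your reformulation of $\Omega$-isotropy as requiring the octonionic Hermitian form $\langle v,w\rangle = \sum_q v_q\overline{w_q}$ to be real-valued on $S\times S$ is correct, and Gram--Schmidt over $\RR$ does produce an orthonormal basis $v_0,\dots,v_n$. But the argument then hinges on the claim that $S + L_{X_1}S + \cdots + L_{X_7}S$ has dimension $8\dim S$, and this is precisely the hard part. You flag it as the main obstacle, but the suggested way of closing it does not carry through: the needed statement is that there is no non-trivial relation $\sum_i u_i v_i = 0$ with $u_i\in\OO$, and because $\OO$ is non-associative one has $L_a L_b\ne L_{ab}$, so neither this relation nor the orthonormality $\langle v_i,v_j\rangle=\delta_{ij}$ translates into a usable identity in $\opna{Cl}_7$. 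Irreducibility of the $8$-dimensional spin module tells you that $\opna{Cl}_7\cdot v$ is large for a single $v\ne 0$, but it says nothing about whether the $8(n+1)$ vectors $\{L_{X_\alpha}v_i\}$ are jointly independent in $\OO^n$, which is a statement about the octonion multiplication, not about the abstract $\opna{Cl}_7$-module structure. The Cayley--Dickson fallback is similarly under-developed: writing $M = A + B\ell$ turns $MM^* = I_{n+1}$ into a coupled pair of quaternionic equations, the first of which reads $AA^* + C = I_{n+1}$ with $C_{ij} = \sum_q \overline{B_{jq}} B_{iq}$; this $C$ is Hermitian positive semidefinite but is \emph{not} of the form $DD^*$ for a single quaternionic matrix $D$, so the standard argument ``$NN^* = I_{n+1}$ forces $\opna{rank} N = n+1$, impossible for an $(n+1)\times n$ matrix over a division ring'' does not apply as stated.

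The paper's proof sidesteps the directness question entirely by a two-step bootstrap. Step 1 shows that every $n$-dimensional $\Omega$-isotropic subspace $W\subset V_1$ is automatically $\Omega$-regular, i.e.\ the map $\Omega^W_\bullet: V_1\to\opna{Hom}(W,\mf g/V_1)$ is surjective. Step 2 then assumes an $(n+1)$-dimensional $\Omega$-isotropic $S$ exists, picks any $n$-dimensional $W\subset S$, and observes that $\Omega^W_\bullet$ is surjective (Step 1) yet vanishes on $S$; this forces $\dim(V_1/S)\geq\dim\opna{Hom}(W,\mf g/V_1)$, i.e.\ $7n-1\geq 7n$, a contradiction. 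Passing to the $n$-dimensional subspace $W$ before invoking Gromov's dimension inequality is the key move, and it has no analogue in your sketch. To complete your argument you would either need to adopt this bootstrap, or actually prove the directness of the octonionic span, which is a genuinely octonion-specific fact and not a formal consequence of the Clifford-module or ``wide isometry'' setup.
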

\begin{proof}
We prove this in two steps:\vspace*{-3mm}
\begin{enumerate}[]
\item[\underline{Step 1):}] Every $n$-dimensional $\Omega$-isotropic subspace $W \subset V_1$ is $\Omega$-regular.\\\quad\\
Let $W=\langle w_1,...,w_n\rangle_\mathbb R$ be an $n$-dimensional $\Omega$-isotropic subspace spanned by
$$w_p=(w_p^{d_1},w_p^{d_2},....,w_p^{k_n}) \quad \text{for }1\le p \le n,$$
with coordinates with respect to the basis $\{d_1,...,k_n\}$ of the first layer $V_1$ of the grading of Lie algebra. \\
Let $Z_1=E,\ Z_2=F$, $Z_3=G$, $Z_4=H$, $Z_5=I$, $Z_6=J$ and $Z_7=K$.
Then holds for $l \in \{1,2,3,4,5,6,7\}$:
%
%\vspace*{-19mm}
\begin{align*}
\opna d\eta_l(\_,w_p )=& \sum_{q \atop [X_m,d_q]=Z_l} w_p^{d_q} X_m^*+\sum_{q \atop [X_m,e_q]=Z_l} w_p^{e_q} X_m^*+\sum_{q \atop [X_m,f_q]=Z_l} w_p^{f_q} X_m^*\\&+\sum_{q \atop [X_m,g_q]=Z_l} w_p^{g_q} X_m^*+\sum_{q \atop [X_m,h_q]=Z_l} w_p^{h_q} X_m^*+\sum_{q \atop [X_m,i_q,]=Z_l} w_p^{i_q} X_m^*\\&+\sum_{q \atop [X_m,j_q]=Z_l} w_p^{j_q} X_m^*+\sum_{q \atop [X_m,k_q]=Z_l} w_p^{k_q} X_m^*
\end{align*}
So we get:

\end{enumerate}
$$\opna d \eta_1( \_ ,w_p)= \sum_q \big (w_p^{d_q}e_q^* + w_p^{f_q}i_q^*+w_p^{h_q}k_q^*+w_p^{g_q}j_q^* -w_p^{e_q}d_q^* - w_p^{i_q}f_q^*-w_p^{k_q}h_q^*-w_p^{j_q}g_q^*\big)$$
$$\opna d \eta_2(\_ ,w_p )= \sum_q \big (w_p^{d_q}f_q^* + w_p^{i_q}e_q^*+w_p^{h_q}j_q^*+w_p^{k_q}g_q^* -w_p^{f_q}d_q^* - w_p^{e_q}i_q^*-w_p^{j_q}h_q^*-w_p^{g_q}k_q^*\big)$$
$$\opna d \eta_3(\_ ,w_p )= \sum_q \big (w_p^{d_q}g_q^* + w_p^{f_q}k_q^*+w_p^{j_q}e_q^*+w_p^{i_q}h_q^* -w_p^{g_q}d_q^* - w_p^{k_q}f_q^*-w_p^{e_q}j_q^*-w_p^{h_q}i_q^* \big)$$
$$\opna d \eta_4(\_ ,w_p )= \sum_q \big (w_p^{d_q}h_q^* + w_p^{g_q}i_q^*+w_p^{j_q}f_q^*+w_p^{k_q}e_q^* -w_p^{h_q}d_q^* - w_p^{i_q}g_q^*-w_p^{f_q}j_q^*-w_p^{e_q}k_q^* \big)$$
$$\opna d \eta_5(\_ ,w_p)= \sum_q \big (w_p^{d_q}i_q^* + w_p^{h_q}g_q^*+w_p^{e_q}f_q^*+w_p^{j_q}k_q^* -w_p^{i_q}d_q^* - w_p^{g_q}h_q^*-w_p^{f_q}e_q^*-w_p^{k_q}j_q^* \big)$$
$$\opna d \eta_6(\_ ,w_p )= \sum_q \big (w_p^{d_q}j_q^* + w_p^{f_q}h_q^*+w_p^{e_q}g_q^*+w_p^{k_q}i_q^* -w_p^{j_q}d_q^* - w_p^{h_q}f_q^*-w_p^{g_q}e_q^*-w_p^{i_q}k_q^* \big)$$
$$\opna d \eta_7(\_ ,w_p )= \sum_q \big (w_p^{d_q}k_q^* + w_p^{g_q}f_q^*+w_p^{h_q}e_q^*+w_p^{i_q}j_q^* -w_p^{k_q}d_q^* - w_p^{f_q}g_q^*-w_p^{e_q}h_q^*-w_p^{j_q}i_q^* \big)$$
\begin{enumerate}[]
\vspace{-7mm}
\item
As $W$ is $\Omega$-isotropic, the curvature form 
$$\Big(\Omega(w_p, \_)\Big)_p=\Big(\big(\opna d \eta_1(w_p, \_ ),\opna d \eta_2(w_p, \_ ),...,\opna d \eta_7(w_p, \_ )\big)\Big)_p$$
can be interpreted as a linear isomorphism $V_1/W \cong \mathbb R^{7n} \to \mathbb R^{7n}$.\\
For $\sigma=(\sigma_{pq}) \in \mathbb R^{7n}$ this leads, as the $w_p$ are linear independent, to a system $A\xi=\sigma$ of linear equations with an invertible matrix $A\in \mathbb R^{7n\times 7n}$. So there always exists a solution and $W$ is $\Omega$-regular.
\item[\underline{Step 2):}] There is no $(n+1)$-dimensional $\Omega$-isotropic subspace $S\subset V_1$.

Assume there is an $(n+1)$-dimensional $\Omega$-isotropic subspace 
$S=\langle s_1,...,s_{n+1} \rangle_\mathbb R$ of  $V_1$.
Then $W=\langle s_1,...,s_n \rangle_\mathbb R$ is $\Omega$-isotropic and $n$-dimensional. Claim 1 implies that $W$ is $\Omega$-regular. So the linear map 
$$\Omega^W_\bullet:V_1 \to \opna{Hom}(W,\mf g/V_1),\ X \mapsto \Omega(X, \_)$$
is surjective and vanishes on $S$.
Therefore: 
\begin{align*} 
7n-1 &= 8n-(n+1) = \dim(V_1/S) \\
&\ge \dim(V_1/\opna{kern}(\Omega_\bullet^W)=\dim (\opna{image}(\Omega_\bullet^W))\\
&=\dim (\opna{Hom}(W,\mf g/V_1))=n(8n+7-8n)=7n
\end{align*}
But this is a contradiction and such an $S$ can't exist.
\end{enumerate}
\vspace{-11mm}
\end{proof}

By Proposition \ref{qoHeis} and Lemma \ref{oHeis} we have condition $b)$ of Theorem \ref{Thm4} satisfied for the octonionic Heisenberg Group. So we get:

\begin{cor}\label{Cor6}\label{superO}
Let $H^n_\OO$ be the octonionic Heisenberg Group of dimension $8n+7$.
Then holds: 
$$F^{n+1}_{H^n_\OO}(l) \succnsim l^\frac{n+1}{n} \ .$$
\end{cor}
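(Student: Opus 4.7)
The plan is simply to verify that the octonionic Heisenberg Group $H^n_\OO$ satisfies condition \textit{b)} of Theorem \ref{Thm4}, since all the structural work has already been done in Proposition \ref{qoHeis} and Lemma \ref{oHeis}. Once this is established, the desired bound drops out by setting $k = n-1$ in that theorem.

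Concretely, I would first extract the inequality $k_0 \ge n-1$ from Proposition \ref{qoHeis}. That proposition exhibits an $n$-dimensional $\Omega$-regular, $\Omega$-isotropic subspace of $V_1 \subset \mf h^n_\OO$ (namely $S = \langle d_1, \ldots, d_n \rangle$), so the maximal dimension $k_0 + 1$ of such a subspace satisfies $k_0 + 1 \ge n$. Next, from Lemma \ref{oHeis}, which rules out any $(n+1)$-dimensional $\Omega$-isotropic subspace in $V_1$, we immediately get $k_1 + 1 \le n$, i.e. $k_1 \le n-1$. Since every $\Omega$-regular, $\Omega$-isotropic subspace is in particular $\Omega$-isotropic, we have $k_0 \le k_1$, so the chain of inequalities
\[
n-1 \;\le\; k_0 \;\le\; k_1 \;\le\; n-1
\]
forces $k_0 = k_1 = n-1 =: k$, which is precisely condition \textit{b)} of Theorem \ref{Thm4}.

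Applying Theorem \ref{Thm4} with this $k$ then yields
\[
F^{k+2}_{H^n_\OO}(l) \;=\; F^{n+1}_{H^n_\OO}(l) \;\succnsim\; l^{\frac{k+2}{k+1}} \;=\; l^{\frac{n+1}{n}},
\]
which is the claim. There is essentially no obstacle here: the heavy lifting is done in Lemma \ref{oHeis} (the non-existence of a larger $\Omega$-isotropic subspace, which is a direct consequence of Gromov's dimension inequality $\dim V_1 - \dim S \ge \dim S \cdot (\dim \mf g - \dim V_1)$ applied to the numerical data $\dim V_1 = 8n$ and $\dim \mf g = 8n+7$) and in Proposition \ref{qoHeis} (producing the $n$-dimensional $\Omega$-regular, $\Omega$-isotropic subspace through the octonionic multiplication table). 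The present corollary is simply the packaging of these two facts through Theorem \ref{Thm4}.
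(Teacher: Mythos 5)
Your proof is correct and follows the paper's own route exactly: establish condition \textit{b)} of Theorem \ref{Thm4} by combining Proposition \ref{qoHeis} (which gives $k_0\ge n-1$) with Lemma \ref{oHeis} (which gives $k_1\le n-1$), deduce $k_0=k_1=n-1$, and apply the theorem. One small imprecision in your parenthetical aside: Lemma \ref{oHeis} is not a \emph{direct} consequence of Gromov's dimension inequality, since that inequality only constrains $\Omega$-regular, $\Omega$-isotropic subspaces; the lemma first needs its Step~1 (that every $n$-dimensional $\Omega$-isotropic subspace of $V_1$ is automatically $\Omega$-regular here) before the dimension count can be invoked — but as you use the lemma as a black box, this does not affect your argument.
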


%
%--------------------------------------------------------
%

\subsection{Lattices in rank 1 symmetric spaces}\label{app}
\vspace*{-5mm}

For $n\ge 2$  the Corollaries \ref{Cor1} and \ref{Cor2} show that the $2$-dimensional filling function $F^2$ is of quadratic type for $H^n_\HH$ and $H^n_\OO$. . This means that a sub-quadratic Dehn function has to be linear, as one can use the fact that there is a gap between linear and quadratic Dehn functions (see \cite{Bowditch}). By the relation $\delta^1 \preccurlyeq F^2$ and the fact that the above groups are not hyperbolic (and therefore can't have linear Dehn functions), their Dehn functions $\delta^1$ are quadratic, too. 
Pittet computed the Dehn function of $H^1_\HH$ (which is cubic) in \textup{\cite{Pittet}} and so the Dehn function of $H^n_\HH$ is known for all $n\in \mathbb N$. Unfortunately there is an error in Pittet's computation of the Dehn function of $H^1_\OO$ (mentioned in \textup{\cite{LeuzingerPittet}}). And annoyingly this error can't be repaired, because the used proposition in \textup{\cite{Pittet}} needs a $2$-form $\omega$ of the shape
$$\omega=\sum_{x_i\in B_1,Y_i\in B_2}\alpha_i(Y_i^*\wedge x_i^*) \ne 0$$
with differential 
$$\opna{d}\omega=\sum_{x_i\in B_1,Y_i\in B_2}\alpha_i \ \opna d(Y_i^*\wedge x_i^*)=0$$
where $B_1=\{d,e,f,g,h,i,j,k\}$ and $B_2=\{E,F,G,H,I,J,K\}$. The condition on the differential leads to a system of linear equations with no non-trivial solution. So there is no such $2$-form and Pittet's technique doesn't work for $H^1_\OO$.\\ One could think, that Burillo's technique \cite{Burillo} could solve this problem, but this leads to the same requirement of a $2$-form with the properties described above. 
This means the $2$-dimensional filling function of $H^1_\OO$ is still unknown, while the $2$-dimensional  filling function of $H^n_\OO$ is known for all $n\ge 2$. 
Nevertheless our results have an application to the (higher dimensional) Dehn functions of non-uniform lattices is the complex and quaternionic hyperbolic spaces:

\begin{cor}\label{hyp}
Let $n \in \mathbb N_{\ge 3}$ and $X$ be the complex hyperbolic space $\opna{SU}(n,1)/\opna S(\opna U(n)\times \opna U(1))$ of dimension $2n$ or the quaternionic hyperbolic space $\opna{Sp}(n,1)/(\opna{Sp}(n) \times \opna{Sp}(1))$ of dimension $4n$. Further let $\Gamma$ be a group acting properly discontinuously by isometries, such that the quotient space $X / \Gamma$ is of finite volume, but not compact. Then holds: 
$$\delta^j_\Gamma(l) \sim l^{\frac{j+1}{j}}  \qquad\text{ for } 1\le j< n-1.$$
\end{cor}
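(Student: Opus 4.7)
My plan is to reduce the statement to the filling function results for the Heisenberg groups $H^{n-1}_\CC$ and $H^{n-1}_\HH$ already established in this paper (Corollary \ref{Cor1} and the analogous complex case, which follows from Theorem \ref{Thm3} combined with Burillo-Young). Since $X/\Gamma$ has finite volume but is non-compact, $\Gamma$ is a non-uniform lattice, and by the standard Margulis lemma / Garland-Raghunathan thick-thin decomposition, $\Gamma$ acts cocompactly on a \emph{neutered space} $X_0 = X \setminus \bigsqcup_i B_i$, where the $B_i$ form a $\Gamma$-invariant family of disjoint open horoballs. The higher Dehn functions $\delta^j_\Gamma$ are quasi-isometry invariants, so $\delta^j_\Gamma \sim F^{j+1}_{X_0}$ throughout the relevant range $1 \le j < n-1$. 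The boundary horospheres $\partial B_i$ are biLipschitz equivalent to the Heisenberg group $H^{n-1}_\CC$ (complex case) or $H^{n-1}_\HH$ (quaternionic case), as recalled in Section \ref{MHeis}.

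For the upper bound, I would combine the Euclidean fillings coming from two sources. Since $X$ is a Hadamard manifold, Wenger's theorem gives any Lipschitz $j$-cycle in $X_0$ a filling $b$ in $X$ with $\mass(b) \preccurlyeq l^{(j+1)/j}$. This filling $b$ may enter the removed horoballs, so for each horoball $B_i$ where $b$ intrudes, I replace $b \cap B_i$ by a filling of $\partial(b \cap B_i)$ inside $\partial B_i$. By Theorem \ref{Thm3} (part (i)) applied to $H^{n-1}_\CC$ or $H^{n-1}_\HH$ via Proposition \ref{qoHeis}, the horosphere has $(j+1)$-Euclidean filling functions in the range $j < n-1$, so each of these horospherical patches has mass $\preccurlyeq l^{(j+1)/j}$. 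Summing yields $F^{j+1}_{X_0}(l) \preccurlyeq l^{(j+1)/j}$.

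For the lower bound, I would exploit the fact that the horosphere $\partial B$ is isometrically embedded in $X_0$ and carries the Heisenberg structure. The cycles $\partial b$ (for $b = B_\varepsilon^M$ as constructed in the proof of Proposition \ref{Propeta}) live on $\partial B$, have controlled mass, and by the Burillo-style argument of Theorem \ref{Burillo} require Euclidean filling mass when filled \emph{inside} the horosphere. To promote this to a lower bound inside $X_0$, I would extend the closed invariant form $\gamma$ from the horosphere to a $\Gamma$-invariant closed form on $X_0$ by pullback along the radial projection $\partial B \times [0,\infty) \to \partial B$ on a collar of the cusp, and rescale/translate the test cycles deeper into the cusp. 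Because $\gamma$ is closed and invariant, any filling $c$ in $X_0$ of $\partial b$ satisfies $\int_c \gamma = \int_b \gamma$; combined with the Heisenberg scaling relations $\hat s_t^*\gamma = t^{j+1}\gamma$ and $\mass(s_t \partial b) \le Ct^j$, Burillo's calculation then transfers verbatim, yielding $F^{j+1}_{X_0}(l) \succcurlyeq l^{(j+1)/j}$.

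The hard part will be Step 3: justifying that the lower-bound form $\gamma$ on $\partial B$ extends closedly and $\Gamma$-invariantly to $X_0$ without losing the scaling identity, and that test cycles placed deep in the cusp cannot be filled more cheaply by ``escaping'' into the bulk of $X_0$. A clean way to dispose of this is to choose the horospherical cycles to lie at depth where the bulk-filling contribution to $\int_c \gamma$ vanishes for geometric reasons (the form $\gamma$ having support concentrated on the horosphere fibre direction), so that only horospherical mass contributes to the period and Burillo's inequality is preserved.
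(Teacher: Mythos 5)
Your plan reduces to the Heisenberg filling functions via the neutered space $X_0$, exactly as the paper does (\v Svarc--Milnor for $\Gamma \sim X_0$, horospheres biLipschitz to $H^{n-1}_\CC$ or $H^{n-1}_\HH$). Where you diverge is that you try to \emph{re-prove} the key intermediate equivalence $F^{j+1}_{X_0} \sim F^{j+1}_{\partial B}$, whereas the paper simply invokes Leuzinger's Theorem~5 and Gromov 5.D.(5)(c) for it (and in fact cites Leuzinger wholesale for the complex case). Your re-derivation has two genuine gaps.

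\textbf{Upper bound: the exponent compounds.} You take the Hadamard filling $b$ of $a$ (mass $\preccurlyeq l^{(j+1)/j}$) and, on each horoball $B_i$, replace $b\cap B_i$ by a horospherical filling of the slice $b\cap\partial B_i$. But the mass of the slice $b\cap \partial B_i$ is only controlled by $\mass(b)\preccurlyeq l^{(j+1)/j}$, not by $l$. Feeding this into the Euclidean filling function of the horosphere gives a patch of mass $\preccurlyeq (l^{(j+1)/j})^{(j+1)/j}=l^{(j+1)^2/j^2}$, which is worse than $l^{(j+1)/j}$. You cannot avoid this without a further ingredient — for example, a Federer--Fleming slicing argument to select a depth $t_i$ inside $B_i$ where the slice has mass $\preccurlyeq l$, together with the observation that horospheres at all depths have the same (scale-invariant) filling function, or a retraction argument. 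As written, the step ``each horospherical patch has mass $\preccurlyeq l^{(j+1)/j}$'' is unjustified.

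\textbf{Lower bound: the form does not extend as claimed.} The Burillo form $\gamma$ on $\partial B$ must become a globally closed, $\Gamma$-invariant form on $X_0$ (not just on a cusp collar) for the period argument $\int_c\gamma=\int_b\gamma$ to control arbitrary fillings $c$ in $X_0$; extending by radial pullback on a collar and then hoping that ``the bulk contribution vanishes for geometric reasons'' leaves this entirely open. Moreover your scaling identity $\hat s_t^*\gamma=t^{j+1}\gamma$ uses the Carnot dilations of the nilpotent group, which are not isometries of the ambient hyperbolic metric, so the relation between depth in the cusp, the Carnot scaling, and the Riemannian mass needs to be spelled out carefully — this is exactly what Leuzinger's proof handles and what you have deferred. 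The paper sidesteps both difficulties by quoting the equivalence $F^{j+1}_{X_0}\sim F^{j+1}_{\partial B}$ as a black box and then applying Corollary~\ref{Cor1}; if you want a self-contained proof you need to supply these missing arguments.
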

\vspace*{-6mm}
\begin{proof}
For the complex hyperbolic case this is proved in \cite[Theorem 5]{Leuzinger14}.\\ 
For the quaternionic case we use, as in the proof of \cite[Theorem 5]{Leuzinger14}, that $\Gamma$ acts geometrically on a space $X_0$ obtained from the quaternionic hyperbolic space $\opna{Sp}(n,1)/\opna{Sp}(n) \times \opna{Sp}(1)$ by removing a $\Gamma$-invariant family of  horoballs. By the Lemma of \v{S}varc-Milnor, $\Gamma$ is quasi-isometric to $X_0$ and so 
$\delta^j_\Gamma \sim \delta^j_{X_0} \sim F^{j+1}_{X_0}$ for all $1 \le j \le n-1$.
The filling functions of $X_0$ are equivalent to the filling functions of any of its boundary components (see \cite[Theorem 5]{Leuzinger14},\cite[5.D.(5)(c)]{GGT}), which are horospheres in $\opna{Sp}(n,1)/\opna{Sp}(n) \times \opna{Sp}(1)$.
The horospheres in the quaternionic hyperbolic space $\opna{Sp}(n,1)/\opna{Sp}(n) \times \opna{Sp}(1)$ are biLipschitz equivalent to the quaternionic Heisenberg Group $H^{n-1}_\HH$.
By Corollary \ref{Cor1} we get $\delta^j_\Gamma(l) \sim F^{j+1}_{X_0}(l) \sim l^\frac{j+1}{j}$ for all $1\le j \le n-1$.
\end{proof}

\begin{rem}
Let $X$ be the complex hyperbolic plane $\opna{SU}(2,1)/\opna S(\opna U(2)\times \opna U(1))$ of dimension $4$ or the quaternionic hyperbolic  plane $\opna{Sp}(2,1)/(\opna{Sp}(2) \times \opna{Sp}(1))$ of dimension $8$ and $\Gamma$  a group acting properly discontinuously on $X$ by isometries, such that the quotient space has finite volume but is not compact. It is proved in \textup{\cite{Pittet}} that:
$$\delta_\Gamma(l) \sim\delta^1_\Gamma(l) \sim l^3 \ .$$
\end{rem}

%-----------------------------------------------------------------------------------------------------------------------------------------------------------------------------------------------------

\vspace*{-5mm}
\section{Horizontal approximation}\label{S4}
\vspace*{-5mm}

To apply Theorem \ref{ThmYoung} and Theorem \ref{ThmYoungII} of Young, we needed to approximate a triangulation $(\tau,f)$ of a stratified nilpotent Lie group $G$ by a $(k+1)$-horizontal map $\psi: \tau \to G$. In this chapter we prove the techniques that enabled us to do this.\\
We consider $G$ equipped with the sub-Riemannian metric $\opna d_c$. We further will use a lot the properties $\Omega$-isotropic and $\Omega$-regular, so remember the definition of the curvature form $\Omega$ (see Section \ref{SectionSNLG}).

\vspace*{-5mm}
\subsection{Some definitions}
\vspace*{-5mm}

Let $W$ be a simplicial complex and let $V$ be a smooth manifold. We call a map $f:W \to V$ smooth, immersion or horizontal if the respective property holds for $f$ restricted to each single simplex of $W$.

\begin{defi}
Let $W'$ be a simplicial complex and let $V$ be a smooth manifold. A  map $f:W' \to V$ is a \textup{folded immersion} if $W'$ has a locally finite covering by compact subcomplexes, $W'=\bigcup W'_i$, such that $f$ is smooth on each simplex of $W'$ and sends each $W'_i$ homeomorphically to a smooth compact submanifold (with boundary) of $V$. \\
If $V=G$ is a stratified nilpotent Lie group, we call a folded immersion $f$  \textup{horizontal} (and/or \textup{$\Omega$-regular}) if $f(W'_i)$ is horizontal (and/or $\Omega$-regular) for all $i$.
\end{defi}

\begin{defi}
Let $G$ be a stratified nilpotent Lie group and let $T$ and $T'$ be  $m$-dimensional simplicial complexes. We say $f':T' \to G$ \textup{approximates} $f:T\to G$, if for every neighbourhood $U\subset G\times T$ of the graph $\{(f(x),x) \mid x\in T\}$ of $f$ and every neighbourhood $V \subset T\times T$ of the diagonal $\{(x,x)\mid x\in T\}$ there are proper homotopy equivalences $\varphi: T \to T'$ and $\varphi': T' \to T$ such that:\vspace{-3mm}
\begin{enumerate}[i)]
\item The graph of $\varphi \circ \varphi'$ is contained in $V$.\vspace{-3mm}
\item The graph of $f' \circ \varphi$ is contained in $U$.\vspace{-3mm}
\end{enumerate}
\end{defi}

%
%-------------------------------------------------------------------
%

\subsection{The folded approximation theorem}
\vspace*{-5mm}
To construct approximations of maps into stratified nilpotent Lie groups by horizontal ones, we need the following Lemma:

\begin{lem}[local h-principle, {\cite[4.2.A$'$]{Gromov}, compare \cite{D'Ambra} and \cite[2.3.2]{PDR}}] \label{localhlem}
Let $G$ be a stratified nilpotent Lie group with grading $\mf g=V_1 \oplus ... \oplus V_d$ of the Lie algebra. Further let $T$ be an $m$-dimensional simplicial complex. 
Then the sheaf of horizontal, $\Omega$-regular, smooth immersions $f:T \to G$ is micro-flexible and satisfies the local h-principle. In particular, for every $\Omega$-regular, $\Omega$-isotropic $m$-dimensional subspace $S\subset V_1$ and every $ g\in G$ there exists a germ of smooth integral submanifolds $W_i \subset G$ at $g$ with $T_g(W_i)=dL_gS$.
\end{lem}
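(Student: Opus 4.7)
The statement decomposes naturally into a germ existence statement and two more technical sheaf-theoretic properties (microflexibility and the local h-principle). My plan is to treat these in increasing order of difficulty, using the germ construction as a building block for microflexibility, and combining microflexibility with $\Omega$-regularity to conclude the h-principle via Gromov's framework.

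First I would establish the germ statement, which is purely Frobenius. Combining the formula $\omega_i(X,Y)=\tfrac{1}{2}\,b_{n_1+i}^{*}([X,Y])$ with $\Omega$-isotropy of $S$ shows that $[X,Y]$ has trivial component in $V_2\oplus\cdots\oplus V_d$ for all $X,Y\in S$; since $[V_1,V_1]\subset V_2$, this forces $[S,S]=0$, so $S$ is an abelian subalgebra sitting inside $V_1$. The left-invariant distribution $g\mapsto dL_g S$ is therefore involutive, and Frobenius produces through each $g\in G$ an integral submanifold, explicitly realisable as $g\cdot\exp(S)$, yielding the required germs $W_i$ with $T_g W_i = dL_g S$.

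Next I would attack microflexibility of the sheaf of horizontal, $\Omega$-regular immersions $T\to G$, following Gromov's general machinery in \cite{Gromov}. The goal is: given a family of such immersions parameterised by a compact set, a perturbation on a subcomplex can be extended to the whole complex for short times. The decisive input is $\Omega$-regularity, which asserts surjectivity of the map $\Omega_\bullet : V_1 \to \mathrm{Hom}(S,\mf g / V_1)$, $X\mapsto \Omega(X,\_)$. This surjectivity allows one to compensate any small deviation in the $\mf g/V_1$-directions by a horizontal fluctuation along the integral submanifolds $W_i$ constructed above. A careful bookkeeping of the sizes of these fluctuations shows that the perturbed maps remain horizontal and $\Omega$-regular, which is exactly the content of microflexibility in Gromov's sense.

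Finally, the local h-principle follows from microflexibility together with ampleness of the underlying partial differential relation. The relation ``horizontal $\Omega$-regular immersion'' is open in the $1$-jet bundle, and its fibres are ample in Gromov's sense precisely because of $\Omega$-regularity: any formal tangent configuration can be realised as a convex combination of actual horizontal derivatives whose span fills out the required affine target in $\mathrm{Hom}(S,\mf g/V_1)$. Convex integration then converts any continuous formal solution into a genuine horizontal approximation. The hard part will be verifying ampleness with sufficient uniformity across the simplicial complex $T$ and controlling the scale of fluctuations so that microflexibility and approximation combine consistently; this is the technical heart of \cite[4.2.A$'$]{Gromov}, with parallel treatments in \cite{D'Ambra} and \cite[2.3.2]{PDR} that can be adapted essentially verbatim to the stratified nilpotent setting.
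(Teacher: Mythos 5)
Your first step is correct and in fact cleaner than what the paper records: $\Omega$-isotropy is equivalent to $S$ being an abelian subalgebra contained in $V_1$, the left translates $g\mapsto dL_gS$ form an involutive distribution, and Frobenius gives the germ $W_i = g\cdot\exp(S)$ with $T_g W_i = dL_g S$. Your second step (microflexibility) is heuristically on track: the "compensating fluctuations" you describe are precisely what Gromov packages as \emph{infinitesimal invertibility} of the operator $\mathcal D\colon f\mapsto\{f^*\eta_i\}_i$, and the paper's proof consists of exactly one observation — that $\mathcal D$ is infinitesimally invertible at $\Omega$-regular horizontal immersions (this is where the surjectivity of $\Omega_\bullet\colon V_1\to\operatorname{Hom}(S,\mf g/V_1)$ enters), after which both microflexibility and the local h-principle are simply the output of the Main Theorem of \cite[2.3.2]{PDR}.

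Your third step, however, contains a genuine error. The relation cut out by horizontality is \emph{not} ample, and convex integration is the wrong tool here. Horizontality is the closed condition $df(T_xT)\subset\mathcal H_{f(x)}$, so over each point the relation is an affine subspace of the $1$-jet fibre; its intersection with any principal subspace is again an affine proper subspace, whose convex hull is itself and not the whole principal subspace. Ampleness fails at every point, and convex integration cannot produce horizontal maps — if it could, one would get horizontal immersions of arbitrarily high dimension, contradicting the pure $\mc H^{k+2}$-unrectifiability used later in Lemma \ref{lemb)}. The paper avoids this entirely: the local h-principle here is of the Nash--Gromov implicit-function-theorem type (microflexibility plus local integrability for an infinitesimally invertible operator), not of the convex-integration type. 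Replacing your ampleness argument by the single sentence "the operator $f\mapsto\{f^*\eta_i\}_i$ is infinitesimally invertible at $\Omega$-regular horizontal immersions, so \cite[2.3.2]{PDR} applies" both fixes the gap and matches the paper's route.
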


This is a corollary of the Main Theorem of \textup{\cite[2.3.2]{PDR}} as the differential operator which sends smooth maps $f:T \to G $ to the induced forms $\{f^*(\eta_i)\}_i$ is infinitesimal invertible on $\Omega$-regular horizontal immersions (compare \textup{\cite[2.3.1]{PDR}}).

\begin{prop}[Folded Approximation Theorem, {\cite[4.4]{Gromov}}] \label{approx} 
Let $G$ be a stratified nilpotent Lie group and let $T$ be a $m$-dimensional simplicial complex. 
Then a continuous map $f_0: T \to G$ admits an approximation by folded horizontal $\Omega$-regular immersions $f':T'  \to G$ if and only if there is a continuous map $T \ni x \mapsto S_x $, where $S_x$ is the translate of an $\Omega$-regular, $\Omega$-isotropic $m$-dimensional subspace $S\subset V_1$.
\end{prop}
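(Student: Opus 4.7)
The plan is to prove both directions of the equivalence, with all the content lying in the sufficiency direction. For necessity, assume $f':T'\to G$ is a folded horizontal $\Omega$-regular immersion approximating $f_0$. Each piece $f'(W'_i)$ is then an $m$-dimensional horizontal $\Omega$-regular submanifold, so at every point $y\in W'_i$ the tangent space $df'_y(T_yW'_i)$ equals $dL_{f'(y)}S_y$ for an $\Omega$-regular $\Omega$-isotropic $S_y\subset V_1$. Pulling this data back through the homotopy equivalence $\varphi:T\to T'$ yields the required continuous map $x\mapsto S_x$ (continuity across overlaps holds because the $W'_i$ are glued along smooth subcomplexes on which the tangent planes coincide).

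For the sufficiency direction, the strategy is to feed the hypothesised section $x\mapsto S_x$ into the local h-principle for $\Omega$-regular horizontal immersions (Lemma \ref{localhlem}) and then to assemble the resulting local solutions into a folded global map. First I would fix a subdivision of $T$ fine enough that on each closed simplex $\sigma$ both $f_0(\sigma)$ and the prescribed tangent field $S_x$ vary by less than a prescribed tolerance. On each such $\sigma$, Lemma \ref{localhlem} guarantees a germ of smooth horizontal $\Omega$-regular integral manifold through a chosen base point whose tangent plane at that point agrees with the prescribed $S_x$; shrinking and applying the local h-principle to the sheaf of horizontal $\Omega$-regular immersions supplies, on each $\sigma$, a smooth horizontal $\Omega$-regular immersion $f_\sigma:\sigma\to G$ which is $C^0$-close to $f_0|_\sigma$ and realises the formal datum $S_x$.

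The main step, and the main obstacle, is to reconcile the pieces $f_\sigma$ along common faces. In general two such local solutions on adjacent simplices will not agree there, and one cannot force agreement without breaking horizontality. The folded structure is designed precisely to bypass this: instead of gluing into a map on $T$, one allows $T'$ to contain extra ``fold" subcomplexes that duplicate thin collars of shared faces, each copy being mapped by its own horizontal $\Omega$-regular immersion. Concretely, I would proceed by induction on the skeleta of the subdivided $T$. Having built a folded horizontal $\Omega$-regular approximation on the $k$-skeleton, I would extend across each $(k{+}1)$-simplex $\sigma$ by first applying microflexibility (Lemma \ref{localhlem}) to perturb the already constructed map in a small neighbourhood of $\partial\sigma$ so that it continues to a horizontal $\Omega$-regular immersion on a collar, and then attaching the local solution $f_\sigma$ by inserting a fold along the collar. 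Microflexibility is what permits these adjustments to be localised and to stay inside a prescribed $C^0$-neighbourhood of $f_0$; the local h-principle is what guarantees that the required horizontal $\Omega$-regular extensions exist at all.

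The remaining book-keeping is to verify the formal properties of a folded immersion and of an approximation. Local finiteness and the compact cover $T'=\bigcup W'_i$ are automatic because each inductive step adds only finitely many folds per simplex of a locally finite subdivision. The homotopy equivalences $\varphi:T\to T'$ and $\varphi':T'\to T$ are obtained by collapsing each fold back onto the face it duplicates; by construction $\varphi\circ\varphi'$ lies in any preassigned neighbourhood of the diagonal, and $f'\circ\varphi$ in any preassigned neighbourhood of the graph of $f_0$, which is exactly the definition of approximation. The delicate point I expect to need the most care is controlling the size of the perturbations produced by microflexibility so that the accumulated deviation from $f_0$ remains below the prescribed tolerance uniformly on $T$; this is handled by choosing the subdivision fine enough that the microflexibility thresholds at each inductive step dominate the geometric scale of the corresponding simplex.
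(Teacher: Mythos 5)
The necessity direction in your proposal matches the paper's argument, and in the sufficiency direction you correctly identify the three ingredients: the local $h$-principle / local integrability (Lemma \ref{localhlem}), microflexibility, and the fold mechanism to bypass the gluing obstruction. However, the actual construction of the folded immersion on the top-dimensional cells is glossed over, and as written the step would fail.

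The problem is the sentence ``first applying microflexibility \dots so that it continues to a horizontal $\Omega$-regular immersion on a collar, and then attaching the local solution $f_\sigma$ by inserting a fold along the collar.'' Microflexibility only produces an extension over an unspecified small neighbourhood of $\partial\sigma$, not over all of $\sigma$; and $f_\sigma$ is an independent local solution that has no controlled relationship to the extended map near $\partial\sigma$, so there is no apparent way to make a single ``fold along the collar'' produce the required locally finite cover $T'=\bigcup W'_i$ with each $W'_i$ mapped homeomorphically onto a compact horizontal submanifold with boundary. Taking a finer subdivision of $T$ does not help, because within each small $m$-simplex the identical problem recurs at its own scale. What the paper does instead is exploit the fact that $\Delta^m\cong(\partial\Delta^m\times[0,1])/\!\sim$ and foliate the simplex by concentric shells $\Delta(t)$. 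Each shell has dimension $\le m-1$, so the parametric $h$-principle gives a continuous family of horizontal $\Omega$-regular solutions $f^t$ near each shell. Microflexibility is then used iteratively to extend the holonomic homotopy from one shell to the next in small $t$-increments $t_1,t_2,\dots$; compactness gives $\min\{t_i\}>0$, so only finitely many steps are needed. The sets $\overline{N_{\frac{2}{3}\varepsilon}(t_i)}$, one per step, identified along the overlaps of their images, \emph{are} the cover $W'_i$ that realises the fold structure, with the folded map defined piecewise by the homotopies $H^i_{t_{i+1}}$. In short, the layer decomposition is precisely the device that turns ``microflexibility extends a little'' into a global folded solution; your proposal uses the same tools but is missing this mechanism, and the single-fold-per-collar substitute does not achieve the same goal.

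Separately, the paper does not fold the lower skeleta at all: Eliashberg--Mishachev's Theorems 13.4.1 and 13.5.1 give a genuine (unfolded) horizontal $\Omega$-regular solution on a neighbourhood of $T^{(m-1)}$, since $T^{(m-1)}$ has positive codimension in $T$. Your proposal inducts on all skeleta with folds everywhere; this is not wrong in principle, but it is unnecessary and obscures the fact that folds are only forced in the top dimension.
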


\begin{proof}
Let $f_0:T \to G$ be continuous and let $f':T' \to G$ be a folded horizontal $\Omega$-regular immersion approximating $f_0$. Then, by definition, there is  a homotopy equivalence $\varphi: T \to T'$. The pullback under $\varphi$ of the folded tangent bundle over $T'$ provides a continuous map $T \ni x \mapsto S_x \subset dL_{f_0(x)}V_1$. This proves the "only if" part. \\
Let's turn towards the "if" direction:
As the sheaf of horizontal $\Omega$-regular immersions $f:T\to G$ are microflexible, $\opna{Diff}(V)$-invariant and by Lemma \ref{localhlem} locally integrable, we can use  \cite[Theorem 13.4.1]{hprinciple} to prove the above Proposition for the $(m-1)$-skeleton $T^{(m-1)}$ of $T$:
$T^{(m-1)}$ is a subpolyhedron of codimension 1 and 
$$F_0: \mc Op(T^{(m-1)})\to \mc R, \ x \mapsto (f_0(x),S_x)$$
is a formal solution near $T^{(m-1)}$. With \cite[Theorem 13.5.1]{hprinciple} there is a genuine solution $F:\mc Op(T^{(m-1)})\to \mc R$ homotopic to $F_0$ and such that $f=\opna{bs}F$ is arbitrary close to $f_0$.\\
It remains to prove the existence of the approximation in the top-dimensional case under the assumption that $f_0$ is an horizontal $\Omega$-regular immersion near the $(m-1)$-skeleton of $T$.\\
We can treat each simplex $\Delta^m\subset T$ separately. So let $f_0: \Delta^m \to G$ be an horizontal $\Omega$-regular immersion near $\partial \Delta^m \subset T^{(m-1)}$.\\
We consider $\Delta^m$ as $\Delta^m= (\partial \Delta^m \times [0,1]) / \sim$ , with $\sim$ the equivalence relation defined by $(x,1)\sim (y,1) \ \forall x,y \in \partial \Delta^m$. Denote by $\Delta(t)$ the layer $\partial \Delta^m \times \{t\}$.
\begin{figure}[h]
\centering
\includegraphics[width=50mm]{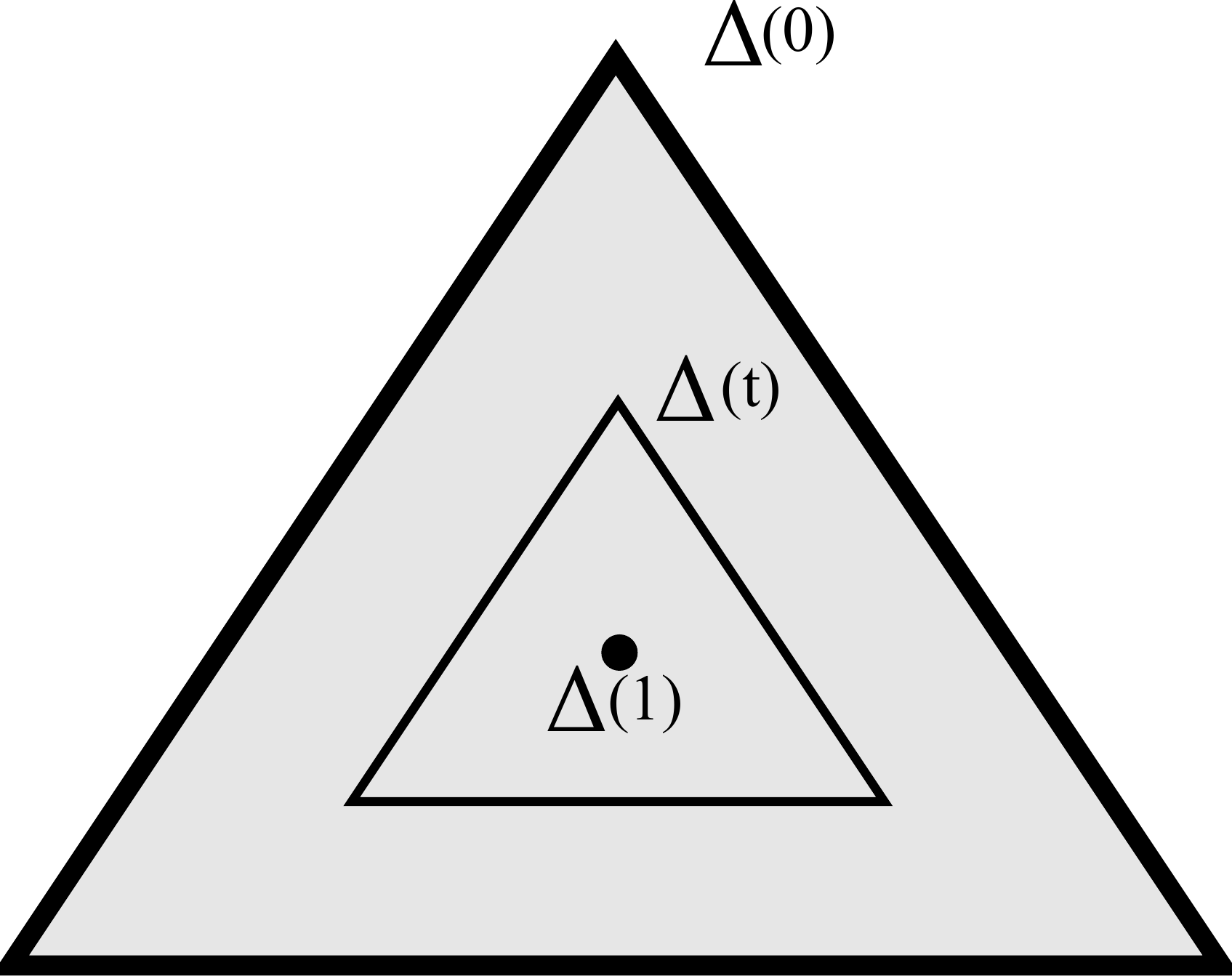}
\caption{ The layers of $\Delta^m$ for $m=2$.}
\end{figure}
\quad\\
Each of this layers $\Delta(t)$ has dimension $\le m-1$. We look at the maps 
$$F_0^t: \mc Op(\Delta(t)) \to \mc R, x \to (f_0(x), S_x)$$
and by \cite[Theorem 13.5.1]{hprinciple} we get horizontal $\Omega$-regular immersions $f^t:\mc Op(\Delta(t)) \to G$ close to $f_0$ (for $t=0$ we take $f^0=f_0$). With \cite[Theorem 13.5.1]{hprinciple}, or more exactly with the parametric version of it, we can choose the family $\{f^t\}$ to be continuous in $t$. Now let $\varepsilon >0$ be sufficiently small, such that the $2\varepsilon$-neighbourhood of the $t$-layer 
$$N_{2\varepsilon}(t)\defeq (\partial \Delta^m\times ((t-2\varepsilon,t+2\varepsilon)\cap[0,1]))/\sim$$
is contained in $\mc Op(\Delta(t))$ for all $t$.
We define the (holonomic) homotopy
$$H^0: \big(N_\frac{\varepsilon}{3}(0) \cup N_\frac{\varepsilon}{3}(\varepsilon)\big)\times [0,1] \to G\ ,$$
$$(x,t) \mapsto \begin{cases} f^0(x) , \text{ if } x \in  N_\frac{\varepsilon}{3}(0) \\                                            f^t(x) , \text{ if } x \in  N_\frac{\varepsilon}{3}(\varepsilon) \end{cases}\ .$$
The microflexibility gives a positive $t_1\in (0,1]$ such that we can extend $H^0$ on $\mc Op(\Delta(0))\times [0,t_1]$. Then we replace $f^0$ by $H_{t_1}^0$.
We define the (holonomic) homotopy
$$H^1: \big(N_\frac{\varepsilon}{3}(t_1-\varepsilon) \cup N_\frac{\varepsilon}{3}(t_1+\varepsilon)\big)\times [0,1] \to G\ , $$
$$ (x,t) \mapsto \begin{cases} f^{t_1}(x) , \text{ if } x \in  N_\frac{\varepsilon}{3}(t_1+\varepsilon) \\                                            f^{t_1+t}(x) , \text{ if } x \in  N_\frac{\varepsilon}{3}(t_1-\varepsilon) \end{cases}$$
where $f^s=f^1$ for all $s\ge 1$.\\
The microflexibility gives a positive $t_2\in (0,1]$ such that we can extend $H^1$ on $\mc Op(\Delta(t_1))\times [0,t_2]$. We replace $f^{t_1}$ by $H_{t_2}^1$.
We define the (holonomic) homotopy
$$H^2: \big(N_\frac{\varepsilon}{3}(t_1+t_2-\varepsilon) \cup N_\frac{\varepsilon}{3}(t_1+t_2+\varepsilon)\big)\times [0,1] \to G\ , $$
$$ (x,t) \mapsto \begin{cases} f^{t_1+t_2}(x) , \text{ if } x \in  N_\frac{\varepsilon}{3}(t_1+t_2-\varepsilon) \\                                            f^{t_1+t_2+t}(x) , \text{ if } x \in  N_\frac{\varepsilon}{3}(t_1+t_2+\varepsilon) \end{cases}\ .$$
The microflexibility gives a positive $t_3\in (0,1]$ such that we can extend $H^2$ on $\mc Op(\Delta(t_1+t_2))\times [0,t_3]$. We replace $f^{t_1+t_2}$ by $H_{t_3}^2$.\\
We continue this procedure until we reach $f^1$. As the number $t_1$ depends continuously on the layers, i.e. on $t\in [0,1]$, there is the minimum $ \min \{t_i\} >0$ and so we only need finitely many steps (less than $\lceil\frac{1}{\min\{t_i\}}\rceil$ many).
We define $T'\defeq \bigcup \overline{N_{\frac{2}{3}\varepsilon}(t_i)}$ as the disjoint union of the closed $\frac{2}{3}\varepsilon$-neighbourhoods of the $\Delta(t_i)$. Further we make identifications corresponding to the intersections of their images under the $H^i_{t_{i+1}}$ and give it a simplicial structure such that the $\Delta(t_i)$ are contained in the $(m-1)$-skeleton. $T'$ is obviously homotopy equivalent to $T$.  Then we get the approximating map as 
$$f'\defeq \bigcup H^i_{t_{i+1}} :T' \to G$$ 
where $f'_{|\overline{N_\varepsilon(t_i)}}= H^i_{t_{i+1}}$.
\end{proof}

\begin{figure}[h]
\centering
\includegraphics[width=120mm]{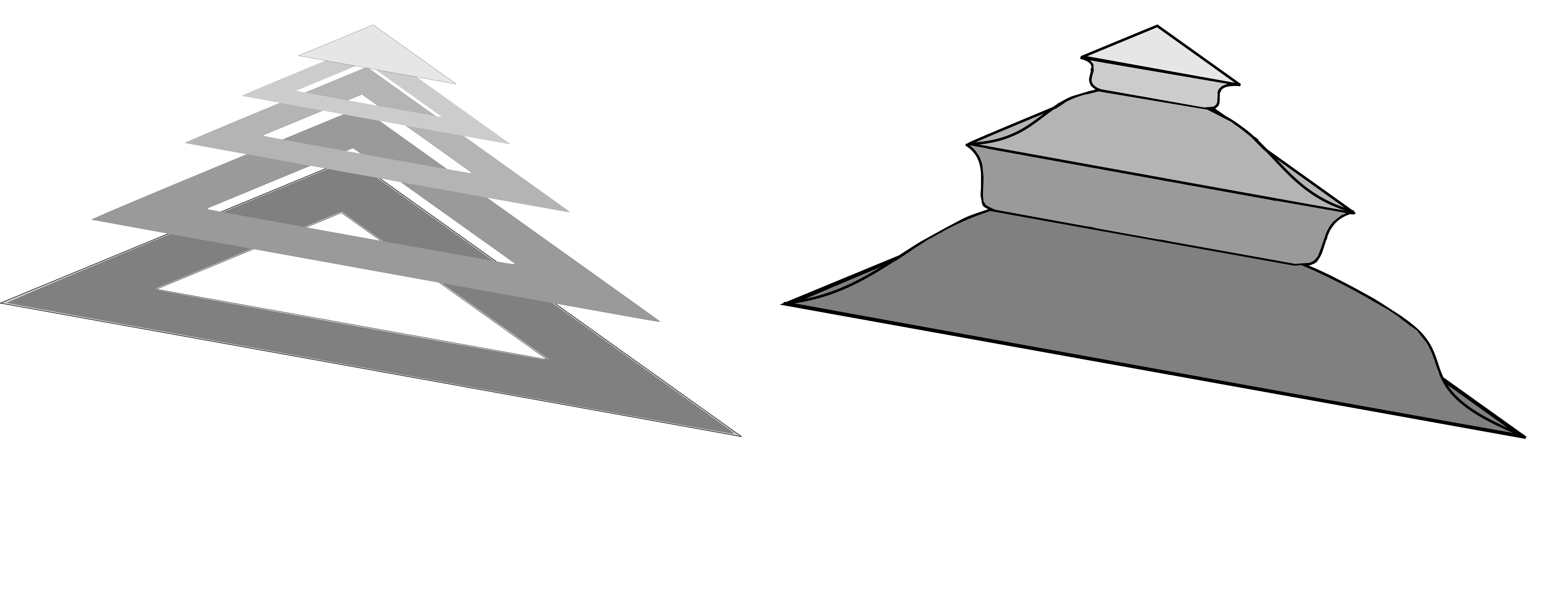}
\caption{Schematical illustration of he images of the $f^{t_i}$ (left) and the images of the $H^i_{t_{i+1}}$ (right) for $m=2$.}
\end{figure}

\quad\\
The above proof shows, that we need \textit{folded} immersions only in the top-dimension (i.e. in dimension $m$, the dimension of the $\Omega$-regular, $\Omega$-isotropic  subspace $S$). In  lower dimensions we can consider $T$ as a subcomplex of an $m$-dimensional simplicial complex $\widetilde T$ and the local $h$-principle yields an $\Omega$-regular horizontal immersion 
$$\widetilde f:\mc Op(T) \to G$$
approximating $f_0$ on $T$ and the desired approximation is given by $f\defeq \widetilde f_{|T}$.

\begin{cor}[{\cite[4.4 Corollary]{Gromov}}]\label{LemGr2}
Let $G$ be a stratified nilpotent Lie group with Lie algebra $\mf g$ and $m \in \mathbb N$. Further let $S$ be a $m$-dimensional $\Omega$-isotropic, $\Omega$-regular horizontal subspace of $\mf g$. Then every continuous map $f_0: T \to G$ from an $m$-dimensional simplicial complex $T$ into the stratified nilpotent Lie group $G$ can be approximated by continuous, piecewise smooth, piecewise horizontal maps $f:T \to G$.
\end{cor}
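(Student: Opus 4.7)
The plan is to derive this as a direct consequence of the Folded Approximation Theorem (Proposition \ref{approx}). The only thing one needs to manufacture is a continuous field of translates of $\Omega$-regular, $\Omega$-isotropic $m$-dimensional subspaces along $f_0$, which the hypothesis hands us essentially for free.

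First I would define, for each $x \in T$, the subspace
$$S_x \defeq dL_{f_0(x)} S \subset T_{f_0(x)} G.$$
Since $f_0$ is continuous and left-multiplication $L_g$ depends smoothly on $g$, the assignment $x \mapsto S_x$ is a continuous map into the appropriate Grassmann bundle; by construction each $S_x$ is the translate of the fixed $\Omega$-regular, $\Omega$-isotropic subspace $S \subset V_1$. This supplies exactly the data required in the ``if'' direction of Proposition \ref{approx}.

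Applying Proposition \ref{approx} yields a folded horizontal $\Omega$-regular immersion $f': T' \to G$ approximating $f_0$, together with proper homotopy equivalences $\varphi: T \to T'$ and $\varphi': T' \to T$ whose graphs satisfy the closeness conditions in the definition of ``approximates''. I would then set
$$f \defeq f' \circ \varphi : T \to G.$$
By the definition of a folded immersion, $f'$ is smooth on each simplex of $T'$ and maps each piece of the covering $T' = \bigcup W'_i$ to a horizontal submanifold of $G$; after precomposing with the simplicial (or at worst piecewise linear) homotopy equivalence $\varphi$ one obtains a continuous, piecewise smooth, piecewise horizontal map $T \to G$. The graph closeness condition from Proposition \ref{approx} ensures that $f$ approximates $f_0$ in the sense of uniform convergence on compact sets.

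The main technical issue I expect is purely bookkeeping: one must be careful that $\varphi$ can be taken simplicial (or at least smooth on each simplex of a suitable subdivision) so that the composition $f' \circ \varphi$ remains piecewise smooth and piecewise horizontal, rather than merely continuous. This is not a serious obstacle, since the homotopy equivalences produced in the proof of Proposition \ref{approx} arise from the simplicial structure on $T'$ built out of the neighbourhoods $\overline{N_{\frac{2}{3}\varepsilon}(t_i)}$, so $\varphi$ can be chosen piecewise linear with respect to a subdivision of $T$. Everything else is immediate from Proposition \ref{approx}.
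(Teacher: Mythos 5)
Your proposal is correct and follows the same route as the paper: define $S_x = dL_{f_0(x)}S$, invoke Proposition \ref{approx} to obtain $f': T' \to G$ with a homotopy equivalence $\varphi: T \to T'$, and take $f = f' \circ \varphi$. The extra remarks on continuity of $x \mapsto S_x$ and on choosing $\varphi$ piecewise linear are reasonable elaborations of what the paper states more tersely.
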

\begin{proof}
We define $S_x=dL_{f_0(x)}S$ and get a horizontal approximation $f':T' \to G$ by Proposition \ref{approx}. As $f'$ approximates $f_0$, there is a homotopy equivalence $\varphi:T \to T'$ and we can use $f=f'\circ \varphi$ as the desired approximation. 
\end{proof}
%\vspace{3mm}
%
%

\begin{rem}\quad \vspace{-3mm}
\begin{enumerate}[a)]
\item We used horizontal approximations in the Riemannian manifold $(G, \opna d_g)$. For the change from the Carnot-Carath\'eodory metric to the Riemannian metric one can use the fact, that both metrics $\opna d_c$ and $\opna d_g$ induce the same topology on $G$ (see \textup{\cite[Proposition 2.26]{LeDonneRigot}}). What one really needs to transport the above results to $G$ equipped with a left-invariant Riemannian metric is, that every continuous  map $f:T \to (G,\opna d_c)$ is also continuous as map $f:T \to (G, \opna d_g)$. This holds in any case if the by $\opna d_c$ induced topology is finer than the topology of the Riemannian manifold.
This is true, as the identity map $\iota: (G, \opna d_c) \to (G, \opna d_g)$ is $1$-Lipschitz (compare Lemma \ref{1lip}) and therefore continuous. So every open set in $(G,\opna d_g)$ is also open as subset of $(G, \opna d_c)$.
Further is the notion of being horizontal in both cases the same. So the above lemma yields a piecewise horizontal approximation of $f_0$ with respect to the Riemannian metric.
\vspace{-3mm}
\item The above Proposition \ref{approx} and its Corollary \ref{LemGr2} hold true for each microflexible differential relation $\mc R_S$ of $S$-directed immersions on a smooth manifold $M$. The points one has to change are to demand the map $x \mapsto S_x$ to be a continuous map into $S$ and the resulting immersion will be $S$-directed instead of horizontal. Then the proof goes exactly the same way as above, one has just to replace the $G$ by $M$, the translates of the $\Omega$-regular, $\Omega$-isotropic subspaces by $S$ and horizontal by $S$-directed.
\end{enumerate}
\end{rem}

%-----------------------------------------------------------------------------------------------------------------------------------------------------------------------------------------------------

\section{Some open questions}\label{S8}
\vspace*{-5mm}

We computed the filling functions for stratified nilpotent Lie groups under the assumption of the existence of $\Omega$-regular, $\Omega$-isotropic subspaces of the first layer of the Lie algebra. 
Our results suggest a division of the behaviour of the filling functions in a part of Euclidean growth in the low dimensions, strictly sub-Euclidean growth in the highest dimensions and at least one dimension of strictly super-Euclidean growth in between. Whether this is still true without the algebraic condition, is an open question. \\
Burillo proved a cubic lower bound for the filling area function of the group $N_n$ of unipotent upper triangular $n\times n$-matrices for $n \ge 4$. So Gromov's conjecture (see \cite[5.D.]{GGT}) about the first super-Euclidean filling function of a nilpotent Lie group is wrong. The group $N_n$ is $(n-1)$-step nilpotent and Gromov's heuristic argument is mainly based on observations for the complex Heisenberg Groups which are $2$-step nilpotent. So we ask: 

\quest{1}
Does every stratified nilpotent Lie group of nilpotency degree $2$ have Euclidean filling functions up to the maximal dimension of horizontal submanifolds and a super-Euclidean filling function in the dimension above?

An important reason why we are interested in (non-abelian) nilpotent Lie groups is the fact, that they have sectional curvature of both signs, negative and positive, at each point. All spaces of non-positive curvature have no super-Euclidean filling functions, so it would be interesting, if the positive curvature at every point could be seen by the filling functions: 

\quest{2}
Does every (non-abelian) nilpotent Lie group have a super-Euclidean filling function in some dimension?

This question may be easier to answer, if one restricts it to stratified nilpotent Lie groups:

\quest{2b}
Does every stratified nilpotent Lie group have a super-Euclidean filling function in some dimension?

Another question arises for the higher divergence functions of stratified nilpotent Lie groups. We computed the exact growth rate for higher divergence functions in the high dimensions, but in the lower dimensions we only have been able to establish Euclidean lower bounds.

\quest{4}
Are there Euclidean upper bounds for the higher divergence functions of stratified nilpotent Lie groups in low dimensions?
\vspace*{-5mm}

\bibliography{bib}
\bibliographystyle{plain}

\textsc{Karlsruhe Institute of Technology, Karlsruhe, Germany}\\
\hspace*{4mm}\textit{E-mail address:} moritz.gruber@kit.edu

\end{document}